\documentclass[a4]{amsart}
\usepackage{fullpage}
\usepackage{nicefrac}
\usepackage{enumerate}
\usepackage{amssymb,mathrsfs,mathtools,amsthm,amscd}
\usepackage[english]{babel}
\usepackage[utf8]{inputenc}
\usepackage[T1]{fontenc}

\usepackage{etex,tikz}\usetikzlibrary{patterns}
\usepackage{xcolor}\usepackage[all]{xy}
\def\C{\mathbb{C}} \def\N{\mathbb{N}} \def\P{\mathbb{P}}

\def\B{\mathbb{B}} 
\def\G{\mathbb{G}} \def\A{\mathbb{A}}
\def\X{\mathscr{X}} \def\Y{\mathscr{Y}}

\def\H{\mathscr{H}}
\def\OO{\mathscr{O}}
\def\I{\mathbb{I}}

\def\Q{\mathscr{Q}}
\def\hom{\hat{\omega}}
\def\hX{\hat{X}}

\def\hPhi{\hat{\varPhi}}
\def\w{\mathfrak{w}}
\def\M{\mathfrak{m}}
\def\q{u}
\def\p{v}
\let\leq\leqslant \let\geq\geqslant

\def\bydef{\mathbin{\mathop{:=}}}

\DeclareMathOperator{\Bs}{Bs} 
\DeclareMathOperator{\Pluc}{Pluc} 
\DeclareMathOperator{\Bl}{Bl} 
\DeclareMathOperator{\Gr}{Gr} 
\DeclareMathOperator{\pr}{pr} 
\DeclareMathOperator{\res}{res} 
\DeclareMathOperator{\codim}{codim} 
\DeclareMathOperator{\Mat}{Mat} 
\DeclareMathOperator{\Span}{Span} 
\DeclareMathOperator{\Supp}{Supp} 
\DeclareMathOperator{\rk}{rank} 

\def\midbar{
  \mathchoice
  {\mathrel{\textsl{\Large|}}}
  {\mathrel{\textsl{\large|}}}
  {\mathrel{\textsl{|}}}
  {\mathrel{\textsl{\small|}}}
}
\def\displaymap#1#2{%
  \ifx\relax#1\relax
  \left.\vcenter{\xymatrix@=0pc{#2}}\right.
  \else
  #1\colon\left\vert\vcenter{\xymatrix@=0pc{#2}}\right.
  \fi
}

\newtheorem{theorem}{Theorem}[section]
\newtheorem{lemma}[theorem]{Lemma}
\newtheorem{proposition}[theorem]{Proposition}
\newtheorem{proposition?}[theorem]{??Proposition??}

\newtheorem*{THM}{Main Theorem}

\theoremstyle{definition}
\newtheorem{remark}[theorem]{\it Remark\/}

\def\a{\mathbf{a}}

\author{Damian Brotbek}
\address{Institut de Recherche Mathématique Avancée, Université de Strasbourg}
\email{brotbek@math.unistra.fr}
\title{On the hyperbolicity of general hypersurfaces}
\setcounter{tocdepth}{2}

\begin{document}
\maketitle

\begin{abstract}
 In 1970, Kobayashi conjectured that general hypersurfaces of sufficiently large degree in \(\P^n\) are hyperbolic. In this paper we prove that 
 a general sufficiently ample hypersurface in a smooth projective variety is  hyperbolic. To prove this statement, we construct hypersurfaces satisfying a property which is Zariski open and which implies hyperbolicity. These hypersurfaces are chosen such that the geometry of their higher order jet spaces can be related to the geometry of a universal family of complete intersections.  To do so, we  introduce a Wronskian construction which associates a (twisted) jet differential  to every finite family of global sections of a line bundle.
\end{abstract}
\section{Introduction}

A smooth projective variety \(X\) over the field of complex numbers is said to be \emph{Brody hyperbolic} if there  is no non-constant holomorphic map \(f:\C\to X\). In view of a result of Brody \cite{Bro78}, in our situation (when \(X\) is compact), this is equivalent to saying that \(X\) is \emph{Kobayashi hyperbolic}, and we will simply use the word \emph{hyperbolic} in what follows. In   \cite{Kob70,Kob98}, Kobayashi conjectured: \emph{a general hypersurface in \(\P^n\) of sufficiently large degree is hyperbolic}. When \(n=2\), this conjecture  follows from the fact that a curve is hyperbolic if and only if its genus is greater or equal to two.

Before considering the general situation, one might wonder if there exist examples of hyperbolic hypersurfaces in \(\P^n\) with \(n\geq 3\). The first such example in \(\P^3\) was constructed by Brody and Green \cite{B-G77} as hypersurfaces defined by equations of the form
\[X_0^{2r}+X_1^{2r}+X_2^{2r}+X_3^{2r}+aX_0^rX_1^r+bX_0^rX_2^r=0,\]
with  \(r\geq 25\) and general \(a,b\in \C\). Afterwards, many authors have provided examples of this nature,  see for instance \cite{Nad89,ElG96,DEG97,S-Z02} and the work of Masuda and Noguchi \cite{M-N96}, where a considerable amount of examples in any dimension is given. See \cite{Zai03} for more details.

 For the case  \(n= 3\), the first proofs of the Kobayashi conjecture were provided in  \cite{McQ99} (for hypersurfaces of degree \(d\geq 36\)) and \cite{DeG00} (for \(d\geq 21\)), and relied among other things on the  ideas of McQuillan \cite{McQ98} about the entire leaves of foliations on surfaces. The bound was later improved to \(d\geq18\) in \cite{Pau08}.

In a more algebraic direction one can study the positivity of the canonical bundle of subvarieties of general hypersurfaces. Recall that the Green-Griffiths-Lang conjecture \cite{G-G80,Lan86} predicts that varieties of general type are weakly hyperbolic (where we say that a variety \(X\) is \emph{weakly hyperbolic} if all its entire curves lie in a  subvariety \(Z\subsetneq X\)). A positive answer to this conjecture would in particular imply  that a smooth projective variety is hyperbolic if all of its subvarieties are of general type. 
The fact that all subvarieties of (very) general hypersurfaces of large degree in \(\P^n\)  are of general type 
was established by the work of Clemens \cite{Cle86}, Ein \cite{Ein88,Ein91} and Voisin \cite{Voi96}, later improved by Pacienza  \cite{Pac04}.

In \cite{Siu04}, Siu generalized  Voisin's variational method from \cite{Voi96} to higher order jet spaces, and outlined a strategy to  prove Kobayashi's conjecture. 
This motivated a lot of research over the last decade \cite{Rou07,Pau08,Div08,Div09,Mer09,DMR10,D-T10,Dar15,Dar16}, which culminated with the work of Diverio, Merker and Rousseau \cite{DMR10}, and the proof of the weak hyperbolicity of general hypersurfaces in \(\P^n\) of degree \(d\geq 2^{(n-1)^5}\).   Building on \cite{DMR10}, Diverio and Trapani \cite{D-T10} proved that the Kobayashi conjecture holds for (very) general hypersurfaces in \(\P^4\) of degree \(d\geq 593\). The bound of the theorem of \cite{DMR10} was  later improved by different authors \cite{Dem11,Ber15}, the current  best bound being \(d\geq (5n)^2n^n\) \cite{Dar15}. We refer to \cite{Pau14} for more details on this approach. More recently, in \cite{Siu15}, Siu provided more details to the strategy outlined in \cite{Siu04} in order to complete his proof of the Kobayashi conjecture. 

 Lastly, Demailly  developed another approach towards Kobayashi's conjecture \cite{Dem15,Dem15b} based on his work on the Green-Griffiths-Lang conjecture \cite{Dem11,Dem14}. 

In view of the work of Zaidenberg \cite{Zai87},  and on the aforementioned works \cite{Cle86,Ein88,Ein91,Voi96,Pac04}, one can expect a possible bound in the Kobayashi conjecture to be \(d\geq 2n-1\) for \(n\geq 3\).\\

The goal of the present paper is to provide an alternative approach to the Kobayashi  conjecture in order to prove the following statement.\begin{THM}
Let \(X\) be a smooth projective variety. For any  ample line bundle \(A\) on \(X\), there exists \(d_0\in \N\) such that for any \(d\geq d_0\), a general hypersurface \(H\in |A^d|\) is  hyperbolic.
\end{THM}
Note that when \(X=\P^n\) and \(A=\OO_{\P^n}(1)\), this is precisely the Kobayashi conjecture. On the other hand, the Kobayashi conjecture implies  our main result for degree \(d\) sufficiently large and sufficiently divisible. Indeed,  for \(d\) sufficiently large, \(A^d\)  induces an embedding \(X\hookrightarrow \P^N\) such that \(A^d=\OO_{\P^N}(1)|_X\).  Then, taking \(d'\) large enough, the statement for elements in \(|A^{dd'}|\) is reduced to the statement for elements in \(|\OO_{\P^N}(d')|\).

The proof we present here is not effective on \(d_0\) because of two noetherianity arguments. However, shortly after a first version of the present paper was made available on the arXiv, Ya Deng \cite{Den16} was able to render both arguments effective, and  obtained  the bound \(d_0=n^{n+1}(n+1)^{n+2}(n^3+2n^2+2n-1)+n^3+3n^2+3n \leq (n+1)^{2n+6}\) when \(A\) is very ample (where \(n=\dim X\)). 

The main tool of our proof is the use of jet differential equations. Those  can be seen as higher order analogues of symmetric differential forms and provide obstructions to the existence of entire curves \cite{G-G80,S-Y97,Dem97,Dem97b}. 
A fruitful way  to produce jet differential equations on a given variety is to use the Riemann-Roch theorem (see for instance \cite{G-G80,Rou06}) or Demailly's holomorphic Morse inequalities \cite{Dem91} (see for instance \cite{Div08,Div09}, and also \cite{Mer15,Dem11}). However, our proof relies on another construction described below.

A general strategy towards proving a hyperbolicity statement is to construct jet differential equations on the variety under consideration and then to control their base locus  in an adequate jet space. This strategy has already been carried out  successfully as  for instance in  \cite{DMR10} and \cite{Siu15}.

Considering jets of order one, recall that  a conjecture of Debarre \cite{Deb05} predicts that \emph{a general complete intersection in \(\P^n\) of high multidegree and of codimension larger than its dimension, has ample cotangent bundle}.
A natural way to approach this conjecture is to construct symmetric differential forms (jet differential equations of order one) on the complete intersection under consideration, and  to control their base locus. This rises a connection between the Kobayashi and the Debarre conjecture which motivated a conjecture  of  Diverio and Trapani \cite{D-T10}.  
  This connection was investigated   in \cite{Bro14}, where among other things, we used the strategy of \cite{Siu04} and the ideas of \cite{DMR10} to prove the conjecture of Debarre for complete intersection surfaces. 
 Later, in \cite{Bro15}, we proved a higher dimensional result towards this conjecture. To do so, we used the openness property of ampleness to reduce the  statement for general complete intersections of a given multi-degree to the construction of an example. This example was constructed by intersecting (many) particular deformations of Fermat type hypersurfaces, on which we were able to produce explicit  symmetric differential forms.
 Afterwards, in \cite{Xie15}   (see also \cite{Xie16}), Xie was able to prove  the Debarre conjecture (with an explicit bound on the degree) by, among other things, generalizing the symmetric differential forms  constructed in  \cite{Bro15} to a wider class of complete intersections. Independently, in a joint work with Darondeau \cite{B-D15}, we gave a geometric interpretation of the cohomological computations of \cite{Bro15}, in order to give a short proof of the  Debarre conjecture.

In the present paper, we generalize the approach developed in \cite{B-D15} to higher order jet spaces.  To simplify the exposition, we will temporarily restrict  ourselves to the case \(X=\P^n\) and \(A=\OO_{\P^n}(1)\). 
 
 While it is known that hyperbolicity is an open property in the euclidean topology (see \cite{Bro78}), it is unknown whether it is open in the Zariski topology. 
 In order to prove the main theorem, we thus  construct an example of a hypersurface satisfying  a certain ampleness property \eqref{eq:kjethyperbolicity}, which implies hyperbolicity and which is a Zariski open property. The statement for general hypersurfaces will then follow from this particular example. 
 
To construct this example, we use hypersurfaces of the same type as the ones used  in \cite{B-D15}. Consider degree \(d\) homogenous polynomials in \(\C[z_0,\dots, z_n]\) of the form 
\begin{equation}\label{eq:Masuda-Noguchi}
F(\a)=\sum_{\substack{I=(i_0,\dots, i_n)\\ i_0+\cdots+i_n=\delta}}a_Iz^{(r+k)I}
\end{equation}
where we used the multi-index notation \(z^{(r+k)I}\bydef z_0^{(r+k)i_0}\cdots z_n^{(r+k)i_n}\) and where the \(a_I\) are homogenous polynomials of degree \(\varepsilon\), so that \(d=\varepsilon +(r+k)\delta\).

Let us motivate this choice of equations by giving a rough idea about how we use the special form of the polynomials \(F(\a)\)   
to construct jet differential equations on the associated  hypersurfaces. As we will see, the higher order differentials of \(F(\a)\), can be written, locally, as
\begin{equation}\label{eq:differential}\left\{\begin{array}{rcccccc}
F(\a)&=& \sum_{I}a_Iz^{(r+k)I}&=&\sum_I\alpha_I^0z^{rI}&=& \sum_I\alpha_I^0T^I\\
d^{[1]}F(\a)&=& \sum_{I}\tilde{a}_I^1z^{(r+k-1)I}&=&\sum_I\alpha_I^1z^{rI}&=& \sum_I\alpha_I^1T^I\\
&\vdots &&\vdots&&\vdots&\\
d^{[k]}F(\a)&=& \sum_{I}\tilde{a}_I^kz^{(r+k-k)I}&=&\sum_I\alpha_I^kz^{rI}&=& \sum_I\alpha_I^kT^I.
\end{array}\right.\end{equation}
Here,  \(\alpha_I^p=\tilde{a}_I^pz^{(r+k-p)I}\), \(\alpha_I^0=a_Iz^{kI}\) and  \(T_i=z_i^r\) for any \(0\leq i\leq n\), where the \(\tilde{a}_I^p\) should be thought of as differential forms of order \(p\).
 One should think of the equations on the left hand side as the equations defining a suitable \(k\)th order jet space \(H_{\a,k}\) of \(H_{\a}\). Considering \([T_0,\dots, T_n]\) as homogenous coordinates on \(\P^n\), one should think of the equations on the right hand side as the equations of the universal family \(\Y\subset \Gr_{k+1}(H^0(\P^n,\OO_{\P^n}(\delta)))\times \P^n\) of complete intersections of  codimension \(k+1\) and multidegree \((\delta,\dots,\delta)\) in \(\P^n\). Here \(\Gr_{k+1}\) denotes the Grassmannian of \((k+1)\)-dimensional subspaces.  The key point is that a suitable interpretation of \eqref{eq:differential} implies that every element of \(H^0(\Y,q_1^*\Q^m\otimes q_2^*\OO_{\P^n}(-1))\) induces a jet differential equation on \(H_{\a}\) (where \(\Q\) denotes the Pl\"ucker line bundle on the Grassmannian and \(q_1,q_2\) denote the canonical projections). But when  \(k+1\geq n\), the morphism \(q_1\) is generically finite, so that \(q_1^*\Q\) is big and nef. Therefore, for large \(m\), \(H^0(\Y,q_1^*\Q^m\otimes q_2^*\OO_{\P^n}(-1))\) contains many elements, from which  we infer the existence of many jet differential equations on \(H_{\a}\).

The outline of the paper is the following. Section \ref{sse:Demailly-Semple} is devoted to a Wronskian construction which is one of the main tools of this paper. First, the needed properties concerning the Demailly-Semple jet tower are recalled, then the Wronskian associated to families of global sections of a line bundle is defined. This allows us to introduce an ideal sheaf on each stage of the jet tower, whose blow-up satisfies several functorial properties. The aforementioned property \eqref{eq:kjethyperbolicity}, which says that a certain line bundle on such a blow-up is ample, is then introduced. Section \ref{se:SectionProof} is devoted to the proof of our main result. After the  hypersurfaces \(H_\a\) are introduced, the above relationship between the jet space of \(H_{\a}\) and the universal family \(\Y\) is formalized. This is the main technical part of our paper. Once this is done, we explain how the geometry of \(\Y\) is used to prove that the hypersurface \(H_\a\) satisfy \eqref{eq:kjethyperbolicity} and therefore conclude the proof of our main result.\\


In this article, we will work over the field of complex numbers \(\C\). While the objects we consider are mostly of algebraic nature, we work in the analytic category because this is needed on a few occasions. Given a vector bundle \(E\) on a variety \(X\), the projectivization of \emph{lines} in \(E\)  is denoted by \(P(E)\). The tangent bundle of a smooth variety \(X\) is denoted  by \(T_X\) and its cotangent bundle by \(T_X^*\). A property is said to hold for a general member of an algebraic family of projective varieties \(\X\to T\) if it holds for each fiber over a non-empty \emph{Zariski} open subset of \(T\).

\section{Wronskians on the Demailly-Semple jet tower}\label{sse:Demailly-Semple}
In this section we construct the main tool we are going to need in the proof of our main result, namely a suitable type of Wronskians on the Demailly-Semple tower. Wronskians provide  a fundamental tool in the study of entire curve and in particular in Nevanlinna theory (see for instance \cite{NoW14}). A fruitful way to construct Wronskians is to use a connection satisfying some regularity assumption as for instance in \cite{Siu87,Nad89,ElG96,DEG97,Nog11}. By contrast, the Wronskians we introduce in this paper, are associated to  sections of a given line bundle. This approach is certainly more classical, as it is mainly a reinterpretation of the Pl\"ucker coordinates of higher order osculating planes associated to projective curves \cite{Har95}. In the one dimensional case, such objects where already studied for different purposes as for instance in \cite{Gal74,Lak84,Nog97}.

\subsection{The Demailly-Semple jet tower} Let us first recall the results we need from Demailly's foundational  work \cite{Dem97} in which the reader will find all the details of the results  outlined here.  Let \(X\) be an \(n\)-dimensional complex manifold, and denote by \(J_kX\stackrel{p_k}{\to}X\) the \(k\)-th order jet space of \(X\).  This is the set of equivalence classes of holomorphic maps \(\gamma:(\C,0)\to X\) where \(\gamma_1\sim\gamma_2\) if and only if \(\gamma_1^{(p)}(0)=\gamma_2^{(p)}(0)\) for all \(0\leq p\leq k\) (the derivatives being computed in any coordinate chart).  The class of  \(\gamma\) in \(J_kX\) is denoted by \([\gamma]_k\). The map \(p_k\) is defined by \(p_k([\gamma]_k)\bydef \gamma(0)\). The space \(J_kX\) naturally possesses the structure of a \(\C^{nk}\)-fiber bundle. Indeed, any  coordinates \((z_1,\dots, z_n)\) on a chart \(U\subset X\) induce coordinates 
\[\big(z_1,\dots, z_n,z_1',\dots, z_n',\dots, z^{(k)}_1,\dots, z^{(k)}_n\big)\]
on \(p_k^{-1}(U)\), where by definition, a jet \([\gamma]_k\in p_k^{-1}(U)\) has coordinates \(\big(\gamma_1(0),\dots,\gamma_n(0),\dots, \gamma_1^{(k)}(0),\dots, \gamma_n^{(k)}(0)\big).\)

A \emph{directed manifold} is a pair \((X,V)\) where \(X\) is a complex manifold and where \(V\subset T_X\) is a subbundle of \(T_X\). In the present paper we will only need two special cases of this general framework: the \emph{absolute case}, when we consider the directed variety \((X,T_X)\); the \emph{relative case}, when we consider the directed variety \((\X,T_{\X/T})\) where \(\X\to T\) is a smooth projective morphism of quasi-projective varieties. But for the clarity of the exposition, we work in the generality of \cite{Dem97}.

On a directed manifold \((X,V)\) one defines  \(J_kV\stackrel{p_{k}}{\to}X\) to be the subset \(J_kV\subset J_kX\) of all \(k\)-jets of curves \(\gamma:(\C,0)\to X\) tangent to \(V\) (\emph{i.e.} \(\gamma'(t)\in V_{\gamma(t)}\) for all \(t\) in a neighborhood of \(0\)). It can be shown that \(J_kV\) is a subbundle of \(J_kX\).

One denotes by \(\G_k\) the group of germs of \(k\)-jets of biholomorphisms of \((\C,0)\), namely
\[\G_k\bydef\left\{\varphi:t\mapsto a_1t+a_2t^2+\cdots +a_kt^k \midbar a_1\in \C^*\ \text{and} \ a_j\in \C\ \text{for}\ j\geq 2\right\},\]
where composition is taken modulo \(t^{k+1}\).  Given a directed manifold \((X,V)\), the group \(\G_k\) naturally acts on \(J_kV\) by the (right) action \(\varphi\cdot [\gamma]_k\bydef [\gamma\circ \varphi]_k\). For any \(k,m\geq 1\), one can construct a locally free sheaf \(E_{k,m}V^{*}\), the sheaf of \emph{invariant jet differential equations of order \(k\) and degree \(m\)}, satisfying, for any open \(U \subset X\)
\[
E_{k,m}V^*(U)=\left\{Q\in \OO\left(p_k^{-1}(U)\right)\midbar Q(\varphi\cdot [\gamma]_k)=\varphi'(0)^mQ([\gamma]_k)\ \ \forall  [\gamma]_k\in p_k^{-1}(U),\ \forall \varphi\in \G_k\right\}.
\]

In the spirit of \cite{Sem54}, Demailly also constructs for each \(k\geq 1\),  a manifold \(P_kV\) of dimension \(n+k(r-1)\) (where \(r=\rk V\)) equipped with a rank \(r\) vector bundle \(V_k\) satisfying \(P_{k+1}V=P(V_k)\), \(X_0=X\) and \(V_0=V\). 
We will refer to the sequence
\[\cdots \to P_kV\stackrel{\pi_{k}}{\to} P_{k-1}V\stackrel{\pi_{k-1}}{\to} \cdots\to P_1V\stackrel{\pi_{1}}{\to} X_0=X,\]
as the \emph{Demailly-Semple jet tower of \((X,V)\)}. In the absolute case \((X,T_X)\) we will simply write \(X_k\bydef P_kT_X\), and in the relative case \((\X,T_{\X/T})\) we will write \(\X_k^{\rm rel}\bydef P_kT_{\X/T}\). 

For each \(k\geq 1\), \(P_kV\) comes with a tautological line bundle \(\OO_{P_kV}(1)\), and more generally, for any \(a_1,\dots, a_k\in \mathbb{Z}\) we set 
\[\OO_{P_kV}(a_k,\dots, a_1)\bydef \OO_{P_kV}(a_k)\otimes \pi_{k-1,k}^*\OO_{P_{k-1}V}(a_{k-1})\otimes \cdots \otimes \pi_{1,k}^*\OO_{P_1V}(a_1),\] 
where for any \(0\leq p\leq k\), one writes \(\pi_{p,k}\bydef \pi_{p+1}\circ \cdots\circ\pi_{k}\).

From  \cite{Dem97} §5, any germ of curve \(\gamma:(\C,0)\to X\) tangent to \(V\) can be lifted to a germ \(\gamma_{[k]}:(\C,0)\to P_kV\). Moreover, if one denotes by \(J_{k}^{\rm reg}V\bydef\{[\gamma]_k\in J_kV\midbar \gamma'(0)\neq 0\}\) the space of \emph{regular \(k\)-jets} tangent to \(V\), then there exists a morphism \(J_k^{\rm reg}V\to P_kV\), sending \([\gamma]_k\) to \(\gamma_{[k]}(0)\), whose image is an open subset \(P_kV^{\rm reg}\subset P_kV\) which can be identified with the quotient \(J_k^{\rm reg}V/\G_k\) (see Theorem 6.8 in \cite{Dem97}). Let us mention moreover that \(P_kV^{\rm sing}\bydef P_kV\setminus P_kV^{\rm reg}\) is a divisor in \(P_kV\).

From \cite{Dem97} Theorem 6.8,  for any \(k,m\geq 0\) one has
\begin{equation}\label{eq:IsomEkm}
E_{k,m}V^*=(\pi_{0,k})_{*}\OO_{P_kV}(m).
\end{equation}
This isomorphism is described as follows. From Corollary 5.12 in \cite{Dem97}, for any \(w_0\in P_kV\), there exists an open neighborhood \(U_{w_0}\) of \(w_0\) and a family of germs of curves \((\gamma_w)_{w\in U_{w_0}}\), tangent do \(V\) depending holomorphically on \(w\) such that 
\begin{equation}\label{eq:LiftJets}
(\gamma_w)_{[k]}(0)=w\ \ \text{and} \ \ (\gamma_{w})_{[k-1]}'(0)\neq 0,\ \ \ \forall w\in U_{w_0}.
\end{equation}
The image of a  given \(Q\in E_{k,m}V^*(U)\) under the isomorphism \eqref{eq:IsomEkm} is the section \(\sigma\in \OO_{P_k(V)}(m)(\pi_{0,k}^{-1}(U))\) defined by 
\[\sigma(w)=Q([\gamma_{w}]_k)\big((\gamma_w)_{[k-1]}'(0)\big)^{-m}.\]

Every non-constant entire curve \(f:\C\to X\) tangent to \(V\) can be lifted to an entire curve \(f_{[k]}:\C\to P_kV\)  satisfying \(f_{[k]}(t)\in P_kV^{\rm reg}\) if \(f'(t)\neq 0\), so that  
 in particular,  the image of \(f_{[k]}\) isn't entirely contained in  \(P_{k}V^{\rm sing}\).  The following fundamental result shows that the existence of jet differential equations vanishing along some ample divisor provides obstructions to the existence of entire curves. 
\begin{theorem}[Demailly, Green-Griffiths, Siu-Yeung] \label{thm:FundamentalVanishing}
Let \(X\) be a smooth projective variety and \(V\) a subbundle of \(T_X\).  For any non-constant entire curve \(f:\C\to X\) tangent to \(V\), any ample line bundle \(A\) on \(X\), any  \(a_1,\dots, a_k\in \N\) and any \(\omega\in H^0(P_kV,\OO_{P_kV}(a_k,\dots, a_1)\otimes \pi_{0,k}^*A^{-1})\) we have
\[f^{[k]}(\C)\subset (\omega=0).\]
\end{theorem}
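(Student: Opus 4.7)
The strategy is to argue by contradiction via the Ahlfors-Schwarz lemma. Suppose $f_{[k]}(\C)\not\subset(\omega=0)$, so that $\sigma\bydef f_{[k]}^*\omega$ is a nonzero holomorphic section on $\C$ of the line bundle
\[
L\bydef f_{[k]}^*\bigl(\OO_{P_kV}(a_k,\dots,a_1)\otimes \pi_{0,k}^*A^{-1}\bigr).
\]
The first step is to equip everything with appropriate metrics. Since $A$ is ample, fix a smooth Hermitian metric $h_A$ on $A$ of strictly positive curvature, normalized so that $i\Theta_{h_A}(A)\geq \omega_X$ for some K\"ahler form $\omega_X$ on $X$. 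Endow each $\OO_{P_jV}(1)$, $j=1,\dots,k$, with an arbitrary smooth Hermitian metric. These choices induce a Hermitian metric on $L$, hence a non-negative smooth function $|\sigma|^2$ on $\C$ which is not identically zero.

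The second step is to re-express $\sigma$ in terms of derivatives of $f$ by exploiting the tautological structure of the Demailly-Semple tower. By construction $\OO_{P_jV}(-1)$ is the tautological line subbundle of $\pi_j^*V_{j-1}$ over $P_jV=P(V_{j-1})$, and the lift $f_{[j]}$ is defined precisely so that the velocity $f_{[j-1]}'(t)\in V_{j-1,\, f_{[j-1]}(t)}$ belongs, whenever nonzero, to the tautological line at $f_{[j]}(t)$. Hence $f_{[j-1]}'$ produces a canonical holomorphic section $\tau_j$ of $f_{[j]}^*\OO_{P_jV}(-1)$ on $\C$, vanishing exactly at the critical points of $f_{[j-1]}$, and the quotient
\[
\eta\bydef \sigma\big/\bigl(\tau_1^{a_1}\cdots\tau_k^{a_k}\bigr)
\]
is a (not identically zero) meromorphic section of $f^*A^{-1}$ on $\C$. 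Locally, through \eqref{eq:IsomEkm}, this is nothing but the realization of $\omega$ as a weighted invariant jet differential on $X$ with values in $A^{-1}$, evaluated on the $k$-jet of $f$.

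The final step is the negative-curvature argument. On the open set where $\eta$ is regular and nonzero, positivity of $h_A$ yields $i\partial\bar\partial\log|\eta|^2_{f^*h_A^{-1}}\geq f^*\omega_X$, and the logarithmic contributions of the $|\tau_j|$ produce additional nonnegative Dirac terms plus smooth curvature corrections coming from the auxiliary metrics on the $\OO_{P_jV}(1)$, corrections that are bounded along the image of $f_{[k]}$. Raising $|\sigma|^2$ to a positive fractional power $1/m$, where $m$ is the total weight of $\omega$ as a jet differential, one obtains a semipositive pseudo-metric on $\C$ whose curvature dominates a positive multiple of itself. The Ahlfors-Schwarz lemma applied on disks $\Delta_R\subset\C$ and the limit $R\to\infty$ then force this pseudo-metric to vanish, contradicting $\sigma\not\equiv 0$. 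The principal technical obstacle is the last curvature bookkeeping: one must absorb the a priori uncontrolled curvature contributions of the auxiliary metrics on the $\OO_{P_jV}(1)$ into the strictly positive contribution coming from $A$, which is precisely why the factor $\pi_{0,k}^*A^{-1}$ is indispensable in the hypothesis.
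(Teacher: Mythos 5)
You should first be aware that the paper does not prove this statement at all: it is quoted as a classical theorem of Green--Griffiths, Siu--Yeung and Demailly, with references given in the introduction. So there is no ``paper proof'' to compare against, and I am assessing your argument on its own terms. Your general strategy (view $\omega$ as an invariant jet differential with values in $A^{-1}$ via \eqref{eq:IsomEkm}, evaluate it on the lifted curve, and run a negative-curvature argument) is the classical one, and your first two steps --- the metric setup and the factorization $\sigma=\eta\cdot\tau_1^{a_1}\cdots\tau_k^{a_k}$ with $\eta$ a section of $f^*A^{-1}$ --- are sound.

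The gap is in the final step, and it is precisely the point you flag as ``the principal technical obstacle'' without actually resolving it. The Ahlfors--Schwarz lemma requires the self-domination inequality $i\partial\bar\partial\log\gamma_0\geq\varepsilon\,\gamma_0\, i\,dt\wedge d\bar t$ for the pseudometric $\gamma_0=|\sigma|^{2/m}$. What the positivity of $h_A$ actually gives is $i\partial\bar\partial\log\gamma_0\geq\frac1m f^*\omega_X$ plus nonnegative Dirac masses, and $f^*\omega_X=\|f'(t)\|^2_{\omega_X}\,i\,dt\wedge d\bar t$ only sees the \emph{first} derivative of $f$, whereas $\gamma_0$ involves, through the tautological sections $\tau_j$ with $j\geq 2$, the quantities $\|f_{[j-1]}'(t)\|$, which are not dominated by $\|f'(t)\|_{\omega_X}$. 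Concretely, for $f(t)=(t^2,t^3)$ and the order-two Wronskian $W=f_1'f_2''-f_2'f_1''=6t^2$ (weight $m=3$), one has $|W(j^2f)|^{2/3}\sim|t|^{4/3}$ while $\|f'\|^2\sim|t|^2$ near $t=0$, so the needed pointwise inequality is simply false. Equivalently: $\pi_{0,k}^*\Theta_{h_A}(A)$ vanishes identically along the fibers of $\pi_{0,k}$, so it cannot ``absorb'' the curvature of the auxiliary metrics on the $\OO_{P_jV}(1)$ in the fiber directions; and the boundedness of those curvature \emph{forms} on the compact space $P_kV$ does not bound their \emph{pullbacks} by $f_{[k]}$, which involve $\|f_{[k]}'\|^2$ rather than $\|f'\|^2$. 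A single section gives no fiberwise positivity either, since $i\partial\bar\partial\log|g|^2=0$ for one holomorphic function $g$. This is exactly why the classical proofs need an extra ingredient: either the Nevanlinna lemma on logarithmic derivatives, which bounds the circle averages of $\log^+\|\omega(f_{[k]})\|$ by $O(\log r+\log T_f(r))$ outside an exceptional set and contradicts the lower bound $T_{f,A}(r)+O(1)$ furnished by Jensen's formula and $\Theta_{h_A}(A)>0$ (Green--Griffiths, Siu--Yeung), or Demailly's formalism of $k$-jet metrics with negative jet curvature, where the self-domination is built into the hypothesis and must be verified by a mechanism other than the horizontal positivity of $A$. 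As written, your last step would fail.
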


\subsection{Wronskians}

\label{sse:Wronskien}We now describe the  Wronskian construction on which we rely in the rest of this paper. Take an \(n\)-dimensional complex  manifold \(X\) and an integer \(k\geq 0\). Let us start with a local construction.  Let \(U\) be an open subset of \(X\),  one can define for every \(0\leq p\leq k\) a \(\C\)-linear map 
\begin{eqnarray*}
d^{[p]}_U:\OO(U)\to \OO\left(p_k^{-1}(U)\right)
\end{eqnarray*}
 by \(d^{[p]}_Uf([\gamma]_k)=(f\circ \gamma)^{(p)}(0)\) for every \(f\in \OO(U)\) and \([\gamma]_k\in p_k^{-1}(U)\subset J_kX\).
One easily verifies that \(d_U^{[p]}f\) is holomorphic and well defined. Indeed, given a  chart in \(U\) with  coordinates \(\underline{z}=(z_1,\dots, z_n)\), by considering the induced coordinates \((\underline{z},\underline{z}',\dots,\underline{z}^{(k)})\), one can describe \(d_U^{[p]}f\) inductively as follows:
\begin{equation}\label{eq:InducDiff}
d_U^{[0]}f=f\ \ \ \text{and}\ \ \ d_U^{[p+1]}f(\underline{z},\dots,\underline{z}^{(k)})=\sum_{m=0}^p\sum_{i=1}^n\frac{\partial d_U^{[p]}f}{\partial z_i^{(m)}}z_i^{(m+1)}\ \ \ \text{for all}\ \ 0\leq p<k,
\end{equation}
from which the holomorphicity follows at once. Observe also that this expression implies that \(d_U^{[p]}f([\gamma]_k)\) only depends only on the jets of \(f\) at \(x\bydef \gamma(0)\) of order less or equal \(p\), by which we mean that it only depends on the class of \(f\) in \(\OO_{X,x}/\mathfrak{m}_{X,x}^{p+1}\),  where \(\mathfrak{m}_{X,x}\) denotes the maximal ideal of \(\OO_{X,x}\). This remark will be used in the proof of Lemma \ref{lem:RegularBpf}.
 We will also need the following generalized Leibniz rule for \(d_U^{[p]}\):
\[d_U^{[p]}(fg)=\sum_{i=0}^p\binom{p}{i}d_U^{[i]}(f)d_U^{[p-i]}(g).\]
Using this differentiation rule, one can construct the Wronskian of any \((k+1)\) holomorphic functions \(f_0,\dots,f_k\in \OO(U)\) by 
\begin{eqnarray}\label{eq:DefWronskien}
W_{U}(f_0,\dots,f_k)\bydef
\left|\begin{array}{ccc}
d_{U}^{[0]}f_0& \cdots &d_{U}^{[0]}f_k\\ 
\vdots  & \ddots & \vdots \\
d_{U}^{[k]}f_0& \cdots &d_{U}^{[k]}f_k
\end{array}\right|
\in
\OO\left(p_k^{-1}(U)\right).
\end{eqnarray}
This object will be most crucial to us. Let us start by proving that this is an invariant jet differential equation. To ease our notation,  in the rest of this paper, for any \(k\in \N\), we set \[k'\bydef \frac{k(k+1)}{2}=1+2+\cdots+k.\] 
\begin{proposition} Same notation as above. For any \(f_0,\dots, f_k\in \OO(U)\), \(W_U(f_0,\dots, f_k)\in E_{k,k'}T_X^*(U)\).
\end{proposition}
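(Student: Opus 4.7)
The plan is to show directly that $W_U(f_0,\dots,f_k)$ satisfies the transformation rule $Q(\varphi\cdot[\gamma]_k)=\varphi'(0)^{k'}Q([\gamma]_k)$ for all $\varphi\in\G_k$. Holomorphicity and local-boundedness will then come for free from the inductive formula \eqref{eq:InducDiff}: each entry $d_U^{[p]}f_j$ is a polynomial in the jet coordinates with coefficients that are holomorphic functions of $\underline{z}$, so their determinant is too.

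The key step is a Faà di Bruno computation. For $f\in\OO(U)$ and a jet $[\gamma]_k$, by definition $d_U^{[p]}f(\varphi\cdot[\gamma]_k)=(f\circ\gamma\circ\varphi)^{(p)}(0)$. Setting $h\bydef f\circ\gamma$ and applying Faà di Bruno to $h\circ\varphi$, one gets
\[
d_U^{[p]}f(\varphi\cdot[\gamma]_k)\;=\;\sum_{j=0}^{p}M_{p,j}(\varphi)\cdot d_U^{[j]}f([\gamma]_k),
\]
where $M_{p,j}(\varphi)$ is the $(p,j)$-th Bell polynomial $B_{p,j}(\varphi'(0),\varphi''(0),\dots)$ for $j\geq 1$, and $M_{0,0}=1$, $M_{p,0}=0$ for $p\geq 1$. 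Crucially this expression \emph{does not depend on $f$} — only on $\varphi$ — and the matrix $M(\varphi)=(M_{p,j}(\varphi))_{0\leq p,j\leq k}$ is lower triangular with diagonal $M_{p,p}(\varphi)=\varphi'(0)^{p}$ (the only partition of $p$ into $p$ parts uses $\varphi'(0)$ exclusively).

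Now apply this to each $f_j$ simultaneously. The Wronskian matrix in \eqref{eq:DefWronskien} has $j$-th column $(d_U^{[0]}f_j,\dots,d_U^{[k]}f_j)^{T}$, and the relation above says that replacing $[\gamma]_k$ by $\varphi\cdot[\gamma]_k$ multiplies each of these columns on the left by the same matrix $M(\varphi)$. Hence the whole Wronskian matrix is multiplied on the left by $M(\varphi)$, and taking determinants yields
\[
W_U(f_0,\dots,f_k)(\varphi\cdot[\gamma]_k)\;=\;\det M(\varphi)\cdot W_U(f_0,\dots,f_k)([\gamma]_k)\;=\;\varphi'(0)^{\,0+1+\cdots+k}\,W_U(f_0,\dots,f_k)([\gamma]_k),
\]
which is exactly $\varphi'(0)^{k'}$ times $W_U(f_0,\dots,f_k)([\gamma]_k)$. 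This is the defining invariance property of $E_{k,k'}T_X^{*}(U)$, so the proposition follows.

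The only delicate point is the Faà di Bruno identity and the claim that $M_{p,p}(\varphi)=\varphi'(0)^p$; everything else is formal linear algebra. One could bypass an explicit appeal to Bell polynomials by induction on $p$ using \eqref{eq:InducDiff} applied in the special coordinate $t$ on $(\C,0)$: differentiating $f\circ\gamma\circ\varphi$ at $0$ and rewriting via $d_U^{[j]}f\circ\gamma$ yields the same triangular system, with the diagonal term contributed solely by the top derivative $(f\circ\gamma)^{(p)}\cdot\varphi'(0)^p$. Either way the triangularity and the diagonal values are the whole content of the proof.
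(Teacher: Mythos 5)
Your proof is correct and follows essentially the same route as the paper: the Fa\`a di Bruno identity gives a lower-triangular transformation of the derivative vectors with diagonal entries \(\varphi'(0)^p\), and the paper's ``elementary operations on the lines'' is exactly your observation that the Wronskian matrix is multiplied on the left by \(M(\varphi)\), so the determinant picks up \(\varphi'(0)^{0+1+\cdots+k}=\varphi'(0)^{k'}\). No gaps.
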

\begin{proof} Recall Fa\`a Di Bruno's formula for holomorphic functions \(h,g\) in one variable such that \(h\circ g\) is defined:
\begin{equation}\label{eq:FaaDiBruno}
(h\circ g)^{(p)}(0)=\sum_{i=1}^pP_{p,i}(g)\cdot h^{(i)}( g(0)),
\end{equation}
where \(P_{p,i}(g)\bydef B_{p,i}\left(g'(0),\dots, g^{(p-i+1)}(0)\right)\) and \(B_{p,i}\) denotes a Bell polynomial. One only needs to know that 
\(P_{p,p}(g)=g'(0)^p.\)
 Take \([\gamma]_k\in p_k^{-1}(U)\) and \(\varphi\in \G_k\). For any \(f\in \OO(U)\) and any \(1\leq p\leq k\) one has
\begin{eqnarray*}
d_U^{[p]}f([\gamma\circ \varphi]_k)&=&(f\circ \gamma\circ \varphi)^{(p)}(0)=\sum_{i=1}^pP_{p,i}(\varphi)(f\circ\gamma)^{(i)}(0)= P_{p,p}(\varphi)d_U^{[p]}f([\gamma]_k)+\sum_{i=1}^{p-1}P_{p,i}(\varphi)d_U^{[i]}f([\gamma]_k)\\
&=& \varphi'(0)^pd_U^{[p]}f([\gamma]_k)+\sum_{i=1}^{p-1}P_{p,i}(\varphi)d_U^{[i]}f([\gamma]_k).
\end{eqnarray*}
Applying this formula to \(f=f_{0},\dots,f_{k}\) and by performing elementary operations on the lines in \eqref{eq:DefWronskien} one obtains that 
\[W_U(f_0,\dots,f_k)(\varphi\cdot[\gamma]_k)=\varphi'(0)^{1+2+\cdots+ k}W_U(f_0,\dots, f_k)([\gamma]_k),\]
and therefore, \(W_U(f_0,\dots,f_k)\in E_{k,k'}T_X^*(U)\). 
\end{proof}
We are now going to globalize this construction.  Let  \(L\) be a holomorphic line bundle on \(X\), suppose that \(U\) is  such that \(L|_U\) can be trivialized and fix such a trivialization.   It induces a \(\C\)-linear map \(H^0(X,L)\to \OO(U)\) which to a global section \(s\) associates the element \(s_U\in \OO(U)\) corresponding to \(s\) under our choice of trivialization. By composing this map with \(d^{[p]}_U\), for \(0\leq p\leq k\), one obtains a \(\C\)-linear map
\begin{eqnarray*}
d^{[p]}_U:H^0(X,L)&\to &\OO\left(p_k^{-1}(U)\right)\\
s&\mapsto& d^{[p]}_Us\bydef d^{[p]}_Us_U.
\end{eqnarray*}

Of course, this map depends on our choice of trivialization, and whenever  this map is used, it will be implicitly assumed such a trivialization has been chosen, this should not lead to any confusion.

This allows us to define the Wronskian of global sections \(s_0,\dots, s_k\in H^0(X,L)\) (above \(U\) with respect to our choice of trivialization) by  
\begin{eqnarray*}
W_{U}(s_0,\dots,s_k)\bydef W_{U}(s_{0,U},\dots,s_{k,U}) \in E_{k,k'}T_X^*(U).
\end{eqnarray*}
One  has the following essential property.
\begin{proposition}\label{prop:WronskianGluing} For any \(s_0,\dots, s_k\in H^0(X,L)\), the locally defined jet differential equations \(W_{U}(s_0,\dots, s_k)\) glue together into a section
\[W(s_0,\dots, s_k)\in H^0\left(X,E_{k,k'}T_X^*\otimes L^{k+1}\right).\]
\end{proposition}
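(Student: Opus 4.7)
The plan is to show that under a change of trivialization of $L$, the local Wronskian $W_U(s_0,\dots,s_k)$ transforms by the $(k+1)$-th power of the transition function, which is precisely the cocycle defining $L^{k+1}$. Since $W_U(s_0,\dots,s_k)$ is already a section of $E_{k,k'}T_X^*$ over $U$ (by the previous proposition), this will imply that the local sections glue into a global section of $E_{k,k'}T_X^* \otimes L^{k+1}$.

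The key computation is the following. Suppose $U$ is a trivializing open set and we compare two trivializations of $L|_U$, related by a nowhere-vanishing holomorphic function $g \in \OO^*(U)$: if $s_{i,U}$ are the functions representing $s_0,\dots,s_k$ in the first trivialization, then $\tilde{s}_{i,U} = g \cdot s_{i,U}$ in the second. The generalized Leibniz rule recalled just before the proposition gives
\[
d_U^{[p]}(g \cdot s_{i,U}) = \sum_{j=0}^{p} \binom{p}{j} d_U^{[j]}(g) \cdot d_U^{[p-j]}(s_{i,U}).
\]
Reading this as a matrix identity, the $(k+1)\times(k+1)$ matrix whose determinant defines $W_U(\tilde s_{0,U},\dots,\tilde s_{k,U})$ equals $T \cdot M$, where $M$ is the matrix defining $W_U(s_{0,U},\dots,s_{k,U})$ and $T$ is the lower triangular matrix with entries $T_{p,q} = \binom{p}{q} d_U^{[p-q]}(g)$ for $q \leq p$ and zero otherwise. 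All diagonal entries of $T$ equal $g$, so $\det(T) = g^{k+1}$, and hence
\[
W_U(\tilde s_{0,U},\dots,\tilde s_{k,U}) = g^{k+1} \cdot W_U(s_{0,U},\dots,s_{k,U}).
\]

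From this transformation law the gluing is immediate. Let $(U_\alpha)$ be an open cover trivializing $L$, with transition functions $g_{\alpha\beta} \in \OO^*(U_\alpha \cap U_\beta)$ satisfying $s_{i,U_\beta} = g_{\alpha\beta} s_{i,U_\alpha}$. By the previous computation,
\[
W_{U_\beta}(s_0,\dots,s_k) = g_{\alpha\beta}^{k+1} \cdot W_{U_\alpha}(s_0,\dots,s_k) \quad \text{on } U_\alpha \cap U_\beta.
\]
Since $(g_{\alpha\beta}^{k+1})$ is exactly the cocycle of $L^{k+1}$, the collection $\bigl(W_{U_\alpha}(s_0,\dots,s_k)\bigr)_\alpha$ defines a global section of $E_{k,k'}T_X^* \otimes L^{k+1}$.

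The only real obstacle is making the Leibniz-rule matrix identity rigorous; once one observes that the change-of-trivialization matrix $T$ is lower triangular with constant diagonal $g$, the rest of the argument is essentially formal. No additional ingredient is needed beyond the Leibniz rule for $d_U^{[p]}$ and the multilinearity of the determinant.
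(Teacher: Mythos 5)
Your proposal is correct and follows essentially the same route as the paper: apply the generalized Leibniz rule to relate the derivatives $d^{[p]}$ in the two trivializations, observe that the resulting change-of-basis matrix is lower triangular with diagonal $g$ (the paper phrases this as elementary row operations), and conclude that $W_U$ transforms by $g^{k+1}$, which is the cocycle of $L^{k+1}$.
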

\begin{proof}
Consider open subsets \(U_1,U_2\subset X\) on which \(L\) is trivialized and let \(g\in \OO(U_{12})^*\) be the transition function from \(U_2\) to \(U_1\) (with \(U_{12}=U_1\cap U_2\)). By definition, this means that for any \(s\in H^0(X,L)\) 
\[s_{U_1}=g s_{U_2}\in \OO(U_{12}).\]
Applying the generalized Leibniz rule to this relation, one obtains, for each \(0\leq p\leq k\),
\begin{eqnarray*}
d_{U_1}^{[p]}s=d^{[p]}_{U_{12}}s_{U_1}=d^{[p]}_{U_{12}}gs_{U_2}=\sum_{i=0}^p\binom{p}{i}d_{U_{12}}^{[p-i]}gd_{U_{12}}^{[i]}s_{U_2}=gd^{[p]}_{U_2}s+\sum_{i=0}^{p-1}\binom{p}{i}d_{U_{12}}^{[p-i]}gd_{U_{2}}^{[i]}s,
\end{eqnarray*}
where all the functions of this computation are restricted to \(p_k^{-1}(U_{12})\). It suffices then to apply this formula to \(s=s_0,\dots, s_k\) and to perform elementary operations on the lines in \eqref{eq:DefWronskien} to obtain that 
\[W_{U_1}(s_0,\dots,s_k)=g^{k+1}W_{U_2}(s_0,\dots,s_k)\]
over \(p_k^{-1}(U_{12})\), whence the result.
\end{proof}

Observe that by applying the Leibniz rule the same way as in the preceding  proof, one also obtains that if \(A\) is any line bundle on \(X\), then for any \(s_0,\dots,s_k\in H^0(X,L)\) and any \(s\in H^0(X,A)\),
\begin{eqnarray}\label{eq:MultWronskien}
W(s\cdot s_0,\dots,s\cdot s_k)=s^{k+1}W(s_0,\dots,s_k)\in H^0\left(X,E_{k,k'}T_X^*\otimes L^{k+1}\otimes A^{k+1}\right).
\end{eqnarray}

\subsection{The Wronskian ideal sheaf} Take a directed manifold \((X,V)\) where \(X\) is a quasi-projective non-singular variety.
Since \(J_kV\) is a subbundle of \(J_kX\) we obtain, for any \(k,m\in \N\),  a restriction morphism 
\[{\res}_V:E_{k,m}T_X^*\to E_{k,m}V^*.\]
Therefore, for any line bundle \(L\) on \(X\) and any \(s_0,\dots, s_k\in H^0(X,L)\) one obtains a section
\[W^V(s_0,\dots, s_k)\bydef \res_V(W(s_0,\dots,s_k))\in H^0(X,E_{k,k'}V^*\otimes L^{k+1}),\]
and  the corresponding element under isomorphism \eqref{eq:IsomEkm} will be denoted by
\[\omega^V(s_0,\dots, s_k)\in H^0\left(P_kV,\OO_{P_kV}\left(k'\right)\otimes \pi_{0,k}^*L^{k+1}\right).\]
When no confusion can arise we just denote it by \(\omega(s_0,\dots,s_k)\) and in the relative case we will also use the notation \(\omega^{\rm rel}(s_0,\dots, s_k)\). 
Set
\[
\mathbb{W}(P_kV,L)\bydef\Span\left\{\omega(s_0,\dots,s_k)\midbar s_0,\dots,s_k\in H^0(X,L)\right\}\subset  H^0\left(P_kV,\OO_{P_kV}\left(k'\right)\otimes \pi_{0,k}^*L^{k+1}\right),
\]
and define the \emph{\(k\)-th Wronskian ideal sheaf of \(L\)} to be the ideal sheaf defined by \(\mathbb{W}(P_kV,L)\), it is denoted by \(\mathfrak{w}(P_kV,L)\). Recall that this means that if one considers the evaluation map
\[{\rm ev}: \mathbb{W}(P_kV,L)\to \OO_{P_kV}\left(k'\right)\otimes \pi_{0,k}^*L^{k+1}\]
then 
\[\w(P_kV,L)\bydef {\rm im}({\rm ev})\otimes\left(\OO_{P_kV}\left(k'\right)\otimes \pi_{0,k}^*L^{k+1}\right)^{-1}\subset \OO_{P_kV}.\]
Let us first explain that under a strong positivity hypothesis on \(L\) one can control \(\Supp\left(\OO_{P_kV}/\w(P_kV,L)\right)\).

 Recall that one says that \(L\) separates \(k\)-jets at a point \(x\in X\) if the evaluation map
\[H^0(X,L)\to L\otimes \OO_{X,x}/\mathfrak{m}_{X,x}^{k+1}\]
is surjective.  One has the following.
\begin{lemma}\label{lem:RegularBpf}
If \(L\) separates \(k\)-jets at every point of \(X\) then 
\[\Supp\left(\OO_{P_kV}/\w(P_kV,L)\right)\subset P_kV^{\rm sing}.\]
\end{lemma}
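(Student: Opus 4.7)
The plan is to reformulate the support inclusion as a local non-vanishing statement and then build explicit Wronskians achieving it. By definition, $\w(P_kV,L)$ is the sheaf-theoretic image of the evaluation map $\mathbb{W}(P_kV,L) \otimes \OO_{P_kV} \to \OO_{P_kV}(k') \otimes \pi_{0,k}^* L^{k+1}$, twisted by the inverse of the target. Hence, to show that a given $w_0 \in P_kV^{\rm reg}$ lies outside $\Supp(\OO_{P_kV}/\w(P_kV,L))$, it suffices to produce sections $s_0, \dots, s_k \in H^0(X,L)$ with $\omega(s_0, \dots, s_k)(w_0) \neq 0$.

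Given such a regular point $w_0$, I would first pick a holomorphic family $(\gamma_w)_{w \in U_{w_0}}$ of germs of $V$-curves as in \eqref{eq:LiftJets}, set $\gamma \bydef \gamma_{w_0}$ (so that $\gamma'(0) \neq 0$ and $x \bydef \gamma(0) = \pi_{0,k}(w_0)$), and choose local coordinates $(z_1, \dots, z_n)$ on a neighborhood $U$ of $x$ adapted to $\gamma$ in the sense that $\gamma'(0) = \partial/\partial z_1|_x$ (this can always be arranged by a linear change of coordinates). Fixing a local trivialization of $L|_U$ and invoking the hypothesis that $L$ separates $k$-jets at $x$, I would then select $s_0, \dots, s_k \in H^0(X,L)$ whose local expressions in the chosen trivialization satisfy $s_{j,U} \equiv z_1^j \pmod{\mathfrak{m}_{X,x}^{k+1}}$ for each $0 \leq j \leq k$.

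The key computation is to evaluate the Wronskian on this carefully chosen tuple. By the remark following \eqref{eq:InducDiff}, the value $d_U^{[p]}f([\gamma]_k)$ depends only on the class of $f$ modulo $\mathfrak{m}_{X,x}^{p+1}$, hence a fortiori only on its class modulo $\mathfrak{m}_{X,x}^{k+1}$. Consequently, each entry $d_U^{[p]} s_j([\gamma]_k)$ coincides with $d_U^{[p]}(z_1^j)([\gamma]_k) = (\gamma_1^j)^{(p)}(0)$, where $\gamma_1$ denotes the first coordinate of $\gamma$. Since $\gamma_1(t) = t + O(t^2)$, this entry vanishes whenever $p < j$ and equals $j!$ when $p = j$, so the matrix in \eqref{eq:DefWronskien} is upper triangular with nonzero diagonal, giving $W_U(s_0, \dots, s_k)([\gamma]_k) = \prod_{j=0}^k j! \neq 0$. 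Combined with the explicit formula for the isomorphism \eqref{eq:IsomEkm} and with $(\gamma_{w_0})_{[k-1]}'(0) \neq 0$, this yields $\omega(s_0, \dots, s_k)(w_0) \neq 0$, as required. The only delicate step is the reduction to the monomial model for the Wronskian, which is precisely what the jet-dependence remark provides; everything else is linear algebra and Demailly's standard dictionary between jet differentials and sections of $\OO_{P_kV}(k')$.
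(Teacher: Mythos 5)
Your proposal is correct and takes essentially the same route as the paper's proof: reduce to exhibiting sections whose Wronskian is non-zero at a regular point, use the separation of \(k\)-jets to choose \(s_j\) congruent to \(z_1^j\) modulo \(\mathfrak{m}_{X,x}^{k+1}\) in coordinates adapted to the curve, and observe that the resulting matrix is triangular with determinant \(\prod_{j=0}^k j!\) (the paper normalizes \(\gamma(t)=(t,0,\dots,0)\) and takes \(z_1^j/j!\), so its determinant is \(1\)). Two minor points: the fact that \(\gamma'(0)\neq 0\) should be justified by \(w_0\in P_kV^{\rm reg}\) (i.e.\ by choosing a regular jet above \(w_0\)), since \eqref{eq:LiftJets} only guarantees \((\gamma_w)_{[k-1]}'(0)\neq 0\); and with rows indexed by the derivative order the matrix is lower, not upper, triangular, which of course does not affect the conclusion.
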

\begin{proof}
Since \(P_kV^{\rm reg}\cong J_k^{\rm reg}V/\G_k\subset J_k^{\rm reg}X/\G_k\), in view of \(\eqref{eq:IsomEkm}\), it suffices to show that for any \([\gamma]_k\in J_k^{\rm reg}X\) there exists \(s_0,\dots, s_k\in H^0(X,L)\) such that \(W(s_0,\dots, s_k)([\gamma]_k)\neq 0.\) 

Take a regular \(k\)-jet \([\gamma]_k\in J_k^{\rm reg}X\) and a neighborhood of \(x\bydef \gamma(0)\) with coordinates \((z_1,\dots, z_n)\) centered at \(x\) such that 
\(\gamma(t)=(t,0,\dots, 0)\) for all \(t\) in a neighborhood of \(0\). Take a trivialization of \(L\) over \(U\), and global sections \(s_0,\dots, s_k\in H^0(X,L)\) extending the  elements \(1,z_1,\frac{z_1^2}{2},\dots, \frac{z_1^k}{k!}\in \OO_{X,x}/\mathfrak{m}_{X,x}^{k+1}\cong L\otimes \OO_{X,x}/\mathfrak{m}_{X,x}^{k+1}\).  One immediately checks that 
\(W_U(1,z_1,\dots, \frac{z_1^k}{k!})([\gamma]_k)=1\), hence \(W(s_0,\dots, s_k)([\gamma]_k)\neq 0\).
\end{proof}
We will also need the following statement.
\begin{lemma}\label{lem:Inclusion} If \(L\) is very ample, then for any \(m\geq 0\) one has 
\[\w(P_kV,L^m)\subset \w(P_kV,L^{m+1}).\]
\end{lemma}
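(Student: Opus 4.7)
The plan is to use the multiplicativity identity \eqref{eq:MultWronskien} together with base-point-freeness of $L$ (which follows from very ampleness) to show that every local generator of $\w(P_kV,L^m)$ coincides, up to a unit, with a local generator of $\w(P_kV,L^{m+1})$. Since the inclusion is a local statement, it suffices to work in a stalk at an arbitrary point $p\in P_kV$ with image $x\bydef\pi_{0,k}(p)\in X$.

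\smallskip
First, since $L$ is very ample, in particular base-point free, so one can pick $s\in H^0(X,L)$ with $s(x)\neq 0$. Given any $s_0,\dots,s_k\in H^0(X,L^m)$, set $t_i\bydef s\cdot s_i\in H^0(X,L^{m+1})$. Identity \eqref{eq:MultWronskien} then gives
\[
\omega(t_0,\dots,t_k) \;=\; \pi_{0,k}^*(s^{k+1})\cdot \omega(s_0,\dots,s_k)
\]
as sections of $\OO_{P_kV}(k')\otimes \pi_{0,k}^*L^{(m+1)(k+1)}$.

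\smallskip
Second, choose a local frame $\tau_m$ of $\OO_{P_kV}(k')\otimes \pi_{0,k}^*L^{m(k+1)}$ on a neighborhood of $p$. Because $s(x)\neq 0$, the pullback $\pi_{0,k}^*s$ is non-vanishing near $p$, so $\tau_{m+1}\bydef \pi_{0,k}^*(s^{k+1})\cdot\tau_m$ is a local frame of $\OO_{P_kV}(k')\otimes \pi_{0,k}^*L^{(m+1)(k+1)}$ near $p$. With these frames, the local holomorphic function representing $\omega(t_0,\dots,t_k)/\tau_{m+1}$ coincides exactly with the one representing $\omega(s_0,\dots,s_k)/\tau_m$. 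Hence every generator of $\w(P_kV,L^m)_p$ arising from a choice of $(s_0,\dots,s_k)\in H^0(X,L^m)^{k+1}$ also belongs to $\w(P_kV,L^{m+1})_p$, since it is the local expression of the element $\omega(t_0,\dots,t_k)\in\mathbb{W}(P_kV,L^{m+1})$. Varying $(s_0,\dots,s_k)$ and $p$ yields the claimed inclusion.

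\smallskip
There is no real obstacle here; the only point requiring some care is the bookkeeping of the additional twist by $\pi_{0,k}^*L^{k+1}$ when passing from $L^m$ to $L^{m+1}$, which is handled precisely by the $(k+1)$-th power of the section $s$ appearing in \eqref{eq:MultWronskien}. The very ampleness of $L$ is only used through the existence of a non-vanishing global section at an arbitrary prescribed point.
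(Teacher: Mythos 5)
Your proof is correct and follows essentially the same route as the paper: both rest on the identity \eqref{eq:MultWronskien} together with the fact that, $L$ being very ample (hence base-point free), one can choose $s\in H^0(X,L)$ non-vanishing near any given point, so that $\pi_{0,k}^*s^{k+1}$ is a local unit and $\omega(s\cdot s_0,\dots,s\cdot s_k)$ generates the same local ideal as $\omega(s_0,\dots,s_k)$. The only difference is presentational — you argue at the level of stalks while the paper works on the open cover $U=(s\neq 0)$ — which is immaterial.
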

\begin{proof}
The assertion is local. Since \(L\) is very ample, \(X\) is covered by open subsets \(U\) of the form \(U=(s\neq 0)\) for \(s\in H^0(X,L)\). Given \(s\in H^0(X,L)\), and \(U=(s\neq 0)\) on obtains from \eqref{eq:MultWronskien} that for any \(s_0,\dots, s_k\in H^0(X,L^m)\),
\[\omega(s\cdot s_0,\dots, s\cdot s_k)=s^{k+1}\omega(s_0,\dots, s_k)\in \mathbb{W}(P_kV,L^{m+1}),\]
where we write \(s^{k+1} \) instead of \(\pi_{0,k}^*s^{k+1}\). The result follows.
\end{proof}
Therefore, given any very ample line bundle \(L\) on \(X\) we have a chain of inclusions
\[
\w\left(P_kV,L\right)\subset \w(P_kV,L^2)\subset\cdots \subset \w(P_kV,L^m)\subset \cdots
\]
By noetherianity, this sequence eventually stabilizes, say after the integer \(m_\infty(P_kV,L)\in \N\) and let us denote the obtained asymptotic ideal sheaf by
\begin{equation}\label{eq:WronskianStab}
\w_{\infty}(P_kV,L)\bydef\w(P_kV,L^{m})\ \ \ \text{for any}\ \ m\geq m_{\infty}(P_kV,L).
\end{equation}
It turns out that this ideal sheaf doesn't depend on the choice of the very ample line bundle \(L\) and is of  purely local nature. To state this result, let us observe that for any \(w\in P_kV\), writing \(x=\pi_{0,k}(w)\), one can define the Wronskian at \(w\) of germs of functions \(f_0,\dots, f_k\in \OO_{X,x}\) by defining
 \[
 \omega_w(f_0,\dots, f_k)=\omega_U(f_0,\dots, f_k)\in \OO_{P_kV,w}
 \]
 where \(U\) is an neighborhood  of \(x\) on which every \(f_i\) is holomorphic, and where the right hand side should be understood as the class, in the local ring, of the Wronskian corresponding to \(W_U(f_0,\dots, f_k)\) under isomorphism \eqref{eq:IsomEkm} and a fixed choice of trivialization of \(\OO_{P_kV}(1)\) in a neigbhorhood of \(w\). With this notation one has the following.
\begin{lemma}\label{lem:WronskienLocal}
Let \(L\) be a very ample line bundle on \(X\). For any \(x\in X\) and any \(w\in P_kV\) such that \(\pi_{0,k}(w)=x\) one has
\[\w_{\infty}(P_kV,L)_w= \big(\omega_w(f_0,\dots,f_k)\big)_{f_0,\dots, f_k\in \OO_{X,x}}\subset \OO_{P_kV,w},\]
where the right hand side denotes the ideal spanned by \(\{\omega_w(f_0,\dots,f_k)\midbar {f_0,\dots, f_k\in \OO_{X,x}}\}\).
\end{lemma}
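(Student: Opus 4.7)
The plan is to fix convenient trivializations and then reduce both sides to the same concrete subset of \(\OO_{P_kV,w}\), exploiting the remark after \eqref{eq:InducDiff} that the operator \(d_U^{[p]}\), and hence any Wronskian \eqref{eq:DefWronskien}, depends on its inputs only through their \(k\)-jets at \(x\). Concretely, I would first pick \(s \in H^0(X,L)\) with \(s(x)\neq 0\), which exists because \(L\) is very ample, and restrict to a neighborhood \(U\) of \(x\) on which \(s\) is nowhere vanishing, so that \(s\) trivializes \(L|_U\) and \(s^{m(k+1)}\) trivializes \(\pi_{0,k}^*L^{m(k+1)}\) over \(\pi_{0,k}^{-1}(U)\); I would also fix a trivialization of \(\OO_{P_kV}(k')\) near \(w\), shrinking \(U\) if needed. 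Finally, I would choose \(m\) large enough that simultaneously \(m \geq m_\infty(P_kV,L)\) and \(L^m\) separates \(k\)-jets at \(x\); the latter holds for \(m \gg 0\) because \(L\) is very ample.

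With these trivializations in place, the gluing computation carried out in the proof of Proposition~\ref{prop:WronskianGluing} identifies each global generator \(\omega(t_0,\dots,t_k)\), with \(t_i \in H^0(X,L^m)\), with the local Wronskian \(\omega_w(t_0/s^m,\dots,t_k/s^m)\in \OO_{P_kV,w}\). Combined with \eqref{eq:WronskianStab}, this yields
\[
\w_\infty(P_kV,L)_w \;=\; \bigl(\,\omega_w(t_0/s^m,\dots,t_k/s^m)\;:\;t_0,\dots,t_k \in H^0(X,L^m)\,\bigr)\ \subset\ \OO_{P_kV,w}.
\]
The inclusion of \(\w_\infty(P_kV,L)_w\) into the ideal on the right-hand side of the statement is then immediate, since each \(t_i/s^m\) lies in \(\OO_{X,x}\). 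For the reverse inclusion, given germs \(f_0,\dots,f_k \in \OO_{X,x}\), jet separation of \(L^m\) at \(x\) produces sections \(t_0,\dots,t_k \in H^0(X,L^m)\) such that \(t_i/s^m \equiv f_i \pmod{\mathfrak{m}_{X,x}^{k+1}}\) for each \(i\). Applying the jet-order remark row by row in the determinant \eqref{eq:DefWronskien} then gives \(\omega_w(f_0,\dots,f_k) = \omega_w(t_0/s^m,\dots,t_k/s^m)\), which lies in \(\w_\infty(P_kV,L)_w\) by the display above.

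The main obstacle I expect is the bookkeeping needed to justify the displayed identification: one must check that, after trivializing \(\pi_{0,k}^*L^{m(k+1)}\) by \(s^{m(k+1)}\) and \(\OO_{P_kV}(k')\) near \(w\), the germ at \(w\) of the global section \(\omega(t_0,\dots,t_k)\) is precisely \(\omega_w(t_0/s^m,\dots,t_k/s^m)\). This amounts to the transition-function calculation \(W_{U_1} = g^{k+1}W_{U_2}\) from the proof of Proposition~\ref{prop:WronskianGluing}, localized at \(w\). Once this is in place, the proof reduces to two clean facts: jet separation realizes any prescribed \(k\)-jet at \(x\) as a global-section quotient, and the Wronskian sees nothing beyond the \(k\)-jet of its inputs.
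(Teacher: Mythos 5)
Your first half is sound: identifying the stalk \(\w_\infty(P_kV,L)_w\) with the ideal generated by the germs \(\omega_w(t_0/s^m,\dots,t_k/s^m)\) for \(t_i\in H^0(X,L^m)\) is exactly the ``obvious'' inclusion, and the bookkeeping you flag is the transition-function computation from Proposition \ref{prop:WronskianGluing}. The gap is in the last step of the reverse inclusion. The remark after \eqref{eq:InducDiff} says that the \emph{value} \(d_U^{[p]}f([\gamma]_k)\) at a single jet \([\gamma]_k\) depends only on the class of \(f\) in \(\OO_{X,x}/\mathfrak{m}_{X,x}^{p+1}\) at the single point \(x=\gamma(0)\). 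But \(\omega_w(f_0,\dots,f_k)\) is a \emph{germ} at \(w\), i.e.\ a function on a whole neighborhood of \(w\) in \(P_kV\); since \(\pi_{0,k}\) is open, such a neighborhood maps onto a neighborhood of \(x\) in \(X\), and at nearby points \(x'\neq x\) the \(k\)-jets of \(f_i\) and of \(t_i/s^m\) have no reason to agree. Hence \(t_i/s^m\equiv f_i \pmod{\mathfrak{m}_{X,x}^{k+1}}\) only gives that the two Wronskians take the same values on the fiber over \(x\) (this pointwise statement is what the proof of Lemma \ref{lem:RegularBpf} uses), not that they coincide as germs in \(\OO_{P_kV,w}\). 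Already for \(k=1\), the germ of \(W(f_0,f_1)\) near \(w\) clearly sees more than the \(1\)-jets of \(f_0,f_1\) at the single point \(x\).

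Because no finite-order jet condition at the single point \(x\) determines the germ, the step cannot be repaired by separating jets of higher order; one needs an \(\mathfrak{m}\)-adic approximation. The paper's route is: use very ampleness to arrange that every monomial \(z^J\) extends to a section of \(H^0(X,L^{m})\) for \(m\geq|J|\), so \(\omega_w(P_0,\dots,P_k)\in\w_\infty(P_kV,L)_w\) for all polynomials \(P_i\); then write \(f_i=P_i+g_i\) with \(g_i\in\mathfrak{m}_{X,x}^{n(m+k)}\), use the factorization \(d_U^{[p]}(z_i^{m+k}g)=z_i^{m}\bigl(z_i^{k-p}\tilde g\bigr)\) and multilinearity to show that \(\omega_w(f_0,\dots,f_k)\in\w_\infty(P_kV,L)_w+\M_{P_kV,w}^{m}\) for every \(m\), and conclude by Krull's intersection theorem. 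Your proposal is missing precisely this approximation-plus-Krull step.
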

\begin{proof} That the left hand side is included in the right hand side is obvious. For the other direction, take \(x\in X\), take an open neighborhood of \(x\) with holomorphic coordinates \((z_1,\dots, z_n)\)  and a trivialization for \(L|_U\) such that \(1,z_1,\dots, z_n\in \OO(U)\cong H^0(U,L|_U)\) all extend to global sections of \(H^0(X,L)\). This is possible since \(L\) is very ample. This implies  that for any \(J=(j_1,\dots, j_n)\in \N^n\), \(z^J\bydef z_1^{j_1}\cdots z_n^{j_n}\) extends to a section in \(H^0(X,L^{m})\) for any \(m\geq |J|\), so that in particular for any \(P_0,\dots, P_k\in \C[z_1,\dots,z_n]\), 
\begin{equation}\label{eq:PolyWronsk}
\omega_{w}(P_0,\dots, P_k)\in \w_\infty (P_kV,L)_w.
\end{equation}
Observe that if \(U'\subset U\) is a  neighborhood of \(x\), \(g\in \OO(U')\), \(m\geq 0\), \(1\leq i\leq n\) and  \(0\leq p\leq k\), then there exists \(\tilde{g}\in \OO(p_k^{-1}(U'))\) such that 
\[
d_U^{[p]}(z_i^{m+k}g)=z_i^{m+k-p}\tilde{g}=z_i^m(z_i^{k-p}\tilde{g}),
\]
where we write, by abuse of notation, \(z_i\bydef\pi_{0,k}^*z_i\). In particular,  working at the level of germs, from the definition of \(\omega_w\) and the multilinearity of the determinant, one obtains that for any \(g_0,\dots, g_k\in \OO_{X,x}\) there exists \(q\in \OO_{P_kV,w}\) such that 
\begin{equation}\label{eq:ApproxMadic}
\omega_w(z_i^{m+k}g_0,g_1,\dots, g_k)=z_i^{m}q\in \M^m_{P_kV,w}.
\end{equation}

Take  \(f_0,\dots,f_k\in \OO_{X,x}\). Since for any \(m\geq 0\) and any \(1\leq i \leq n\) one can write \(f_i=P_i+g_i\) with \(P_i\in \C[z_1,\dots,z_n]\) and \(g_i\in \M_{X,x}^{n(m+k)}\), it follows from \eqref{eq:PolyWronsk} and \eqref{eq:ApproxMadic} that 
\[\omega_{w}(f_0,\dots,f_k)\in \w_{\infty}(P_kV,L)_{w}+\M^m_{P_kV,w}.\]
Since this holds for any \(m\in \N\), it follows from Krull's intersection theorem that
\(\omega_{w}(f_0,\dots,f_k)\in \w_{\infty}(P_kV,L)_{w}.\)

\end{proof}
This lemma allows us to define the \emph{asymptotic Wronskian ideal sheaf of \(P_kV\)} by 
\[
 \w_{\infty}(P_kV)\bydef \w_{\infty}(P_kV,L)\subset \OO_{P_kV},
\]
where \(L\) is any very ample line bundle on \(X\).  Moreover, if \(U\subset X\) is an open subset of \(X\) we will also set \(\w_{\infty}(P_kV|_U)\bydef \w_{\infty}(P_kV)|_{\pi_{0,k}^{-1}(U)}\), this is an ideal sheaf on \(\pi_{0,k}^{-1}(U)=P_kV|_U.\)

Lemma \ref{lem:WronskienLocal} also implies that \(\w_\infty\) behaves well under restriction. Namely, for any \((Y,V_Y)\) and \((X,V_X)\) such that \(Y\subset X\) and such that \(V_Y\subset V_X|_Y\), under the induced inclusion \(P_kV_Y\subset P_kV_X\) one has 
\begin{equation}\label{eq:RestrictionWronskien}
\w_{\infty}(P_kV_X)|_{P_kV_Y}=\w_{\infty}(P_kV_Y).
\end{equation}

\subsection{Blow-up of the Wronskian ideal sheaf}\label{sse:Resolution}
The Wronskian sections defined in Section \ref{sse:Wronskien} can certainly not be used as such to apply Theorem \ref{thm:FundamentalVanishing} because of the (positive) twist by \(L^{k+1}\). However they will be the building blocs for the jet differential equations we are going to construct. As a consequence the ideal sheaf \(\w_{\infty}\) will be an obstruction to the positivity result (on a suitable tautological line bundle on the Demailly-Semple jet tower) we aim at. Therefore, we are led to blow up this Wronskian ideal sheaf.

Take a directed manifold \((X,V)\) where \(X\) is a quasi-projective variety (or an euclidian open subset of a quasi-projective variety). With the above notation, define 
\begin{equation}
\hat{P}_kV\bydef \Bl_{\w_{\infty}(P_kV)}(P_kV)\stackrel{\nu_k}{\to} P_kV
\end{equation}
to be to the blow-up of \(P_kV\) along \(\w_\infty(P_kV)\). In the absolute case we will write \(\hX_k=\hat{P}_kT_X\), and in the relative case we will write \(\hat{\X}_k^{\rm rel}=\hat{P}_kT_{\X/T}\). A priori, one doesn't have any control on the singularities of \(\hat{P}_kV\). Let us denote by  \(F\) the effective Cartier divisor on \(\hat{P}_kV\) such that  
\[
\OO_{\hat{P}_kV}(-F)= \nu_k^{-1}\w_{\infty}(P_kV)=\w_{\infty}(P_kV)\cdot \OO_{\hat{P}_kV}.
\]

By the definition of \(\w_{\infty}(P_kV)\) and in view of Lemma \ref{lem:WronskienLocal} one obtains that for any very ample line bundle \(L\) on \(X\), any \(m\geq 0\) and any \(s_0,\dots, s_k\in H^0(X,L^m)\), there exists 
\[
\hom(s_0,\dots, s_k)\in 
H^0\left(
\hat{P}_kV,
\nu_k^{*}\big(\OO_{P_kV}
\left(
k'\right
)
\otimes \pi_{0,k}^*L^{m(k+1)}
\big)\otimes \OO_{\hat{P}_kV}(-F)
\right)
\]
such that, if one denotes by \(F\cdot\) the map induced by the inclusion \(\OO_{\hat{P}_kV}(-F)\to \OO_{\hat{P}_kV}\),
\begin{equation}\label{annulation}
\nu_k^*\omega(s_0,\dots, s_k)=F\cdot \hom(s_0,\dots, s_k).
\end{equation}
Moreover, one obtains that for any \(m\geq m_{\infty}(P_kV,L)\) and any \(\hat{w}\in \hat{P}_kV\) there exists \(s_0,\dots, s_k\in H^0(X,L^m)\) such that 
\begin{equation}\label{eq:TimesF}
\hom(s_0,\dots, s_k)(\hat{w})\neq 0.
\end{equation}
Observe that from \eqref{eq:RestrictionWronskien} one can deduce a functoriality property for these blow-ups. Indeed, for any \((Y,V_Y)\subset (X,V_X)\), the inclusion \(P_kV_Y\subset P_kV_X\) induces an inclusion
\begin{equation}\label{functoriality}
\hat{P}_kV_Y\subset \hat{P}_kV_X.
\end{equation}
Moreover, \(\hat{P}_kV_Y\) is the strict transform of \(P_kV_Y\) in \(\hat{P}_kV_X\) and \(\OO_{\hat{P}_kV_Y}(-F)=\OO_{\hat{P}_kV_X}(-F)|_{\hat{P}_kV_Y}\). An important consequence of Lemma \ref{lem:WronskienLocal} is that this blow-up process behaves well in families.
\begin{proposition}\label{prop:FamilyBlow-up} Let \(\X\stackrel{\rho}{\to} T\) be a smooth and projective morphism between non-singular quasi-projective varieties.  Take \(\nu_k:\hat{\X}_k^{\rm rel}\to \X_k^{\rm rel}\) as above.  For any \(t_0\in T\) writing  \(X_{t_0}\bydef \rho^{-1}(t_0)\), one has 
\[\nu_k^{-1}(X_{t_0,k})=\hX_{t_0,k}\ \ \ \text{and}\ \ \ \OO_{\hat{\X}_k^{\rm rel}}(-F)|_{\nu_k^{-1}(\hX_{t_0,k})}\cong \OO_{\hX_{t_0,k}}(-F).\]
\end{proposition}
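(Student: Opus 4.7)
The plan is to reduce everything to an analytic-local trivialization of the family. Since $\rho:\X\to T$ is smooth, about any point $x\in X_{t_0}$ there is an analytic open neighborhood $\tilde W\subset \X$ and an isomorphism $\tilde W\cong W\times U$, with $W\subset X_{t_0}$ an open neighborhood of $x$ and $U\subset T$ an open neighborhood of $t_0$, under which $\rho|_{\tilde W}$ becomes the second projection. In such a chart $T_{\X/T}|_{\tilde W}=\mathrm{pr}_1^*(T_{X_{t_0}}|_W)$, so the relative Demailly--Semple tower becomes a trivial fibration
\[
P_kT_{\X/T}|_{\tilde W}\cong P_k(T_{X_{t_0}})|_W\times U.
\]

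The key technical step is to show that in this trivialization the relative Wronskian ideal sheaf $\w_{\infty}(\X_k^{\rm rel})$ is the pullback of $\w_{\infty}(X_{t_0,k})$ under the first projection. By Lemma~\ref{lem:WronskienLocal}, the stalk $\w_{\infty}(\X_k^{\rm rel})_w$ is generated by the Wronskians $\omega^{\rm rel}_w(f_0,\dots,f_k)$ with $f_i\in\OO_{\X,\pi_{0,k}(w)}$. Choose local coordinates $t_1,\dots,t_m$ on $U$ vanishing at $t_0$, pulled back to $\X$. Any germ of curve tangent to $T_{\X/T}$ stays inside a single fiber of $\rho$, so $t_j\circ\gamma$ is constant and hence $d^{[p]}t_j=0$ on $J_kT_{\X/T}$ for every $p\geq 1$; by the Leibniz rule, $d^{[p]}(t_j g)=t_j\,d^{[p]}(g)$ on $J_kT_{\X/T}$ for every germ $g$ and every $p\geq 0$. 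Now truncate each $f_i$ at order $N$ in the $t_j$'s, writing $f_i=P_{i,N}+R_{i,N}$ with $P_{i,N}$ a polynomial in $t_1,\dots,t_m$ whose coefficients are germs pulled back from $W$ and $R_{i,N}\in (t_1,\dots,t_m)^N\OO_{\X,x}$. By multilinearity of the determinant in columns,
\[
\omega^{\rm rel}_w(f_0,\dots,f_k)\equiv \omega^{\rm rel}_w(P_{0,N},\dots,P_{k,N})\pmod{(t_1,\dots,t_m)^N\OO_{\X_k^{\rm rel},w}},
\]
and the right-hand side lies in the pullback ideal (again by multilinearity). Since $(t_1,\dots,t_m)$ is contained in the maximal ideal of the local ring $\OO_{\X_k^{\rm rel},w}$, Krull's intersection theorem (combined with the Artin--Rees lemma, exactly as in the proof of Lemma~\ref{lem:WronskienLocal}) forces $\omega^{\rm rel}_w(f_0,\dots,f_k)$ itself to lie in the pullback ideal, giving the desired equality.

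Once this is established, blow-up commutes with the flat base change given by the first projection $P_k(T_{X_{t_0}})|_W\times U\to P_k(T_{X_{t_0}})|_W$, so locally
\[
\hat{\X}_k^{\rm rel}\big|_{P_kT_{\X/T}|_{\tilde W}}\cong \hX_{t_0,k}|_W\times U,
\]
with the exceptional divisor $F$ on the left pulled back from the exceptional divisor on $\hX_{t_0,k}|_W$. Taking the scheme-theoretic fiber over $t_0$ identifies $\nu_k^{-1}(X_{t_0,k})$ with $\hX_{t_0,k}$ inside the chart, and restricts $\OO_{\hat{\X}_k^{\rm rel}}(-F)$ to $\OO_{\hX_{t_0,k}}(-F)$. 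Gluing over an analytic cover of $X_{t_0}$ by such trivializing charts yields both global statements.

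The main obstacle is the first step. The weaker assertion that $\w_{\infty}(\X_k^{\rm rel})$ restricts to $\w_{\infty}(X_{t_0,k})$ on the fiber is already~\eqref{eq:RestrictionWronskien}, and by the functoriality~\eqref{functoriality} this only identifies $\hX_{t_0,k}$ with the \emph{strict} transform of $X_{t_0,k}$; by itself, this does not prevent $\nu_k^{-1}(X_{t_0,k})$ from acquiring extraneous exceptional components. Upgrading the restriction statement to a genuine pullback statement in the trivialization really uses the special feature of the relative construction that $d^{[p]}$ annihilates the $T$-directions for $p\geq 1$.
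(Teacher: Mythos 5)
Your proposal is correct and follows essentially the same route as the paper: exhibit a local product structure coming from smoothness of \(\rho\), upgrade \eqref{eq:RestrictionWronskien} to the statement that \(\w_\infty(\X_k^{\rm rel})\) is locally the pullback of the fiberwise Wronskian ideal (using Lemma \ref{lem:WronskienLocal}, the fact that the base coordinates are constants under relative differentiation, multilinearity, and Krull's intersection theorem), and then use that blowing up a pulled-back ideal yields a product. The only cosmetic difference is that you truncate the germs only in the base variables with holomorphic fiber coefficients, whereas the paper approximates by polynomials in all variables; the mechanism is identical.
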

\begin{proof}
The key point of the argument is to prove that the family under consideration with the Wronskian  ideal sheaf is locally a product.

Take \(t_0\in T\) and \(x\in X_{t_0}\subset \X\). Take a neighborhood \(U\subset \X\) of \(x\) such that \(U\cong U_1\times U_2\) where \(U_1\subset T\) is a neighborhood of \(t_0\) and where \(U_2\subset \C^n\) and such that under this isomorphism, the map \(\rho\) is identified with the first projection \(p_1:U\to U_1\). This can be achieved since \(\rho\) is a smooth morphism. Denoting by \(p_2:U\to U_2\) the second projection, one obtains an isomorphism
\[\pi_{0,k}^{-1}(U)=P_kT_{U/T}\cong U_1\times P_kT_{U_2}.\]    
Composing it with the second projection, one obtains a morphism
\(p_2^k:P_kT_{U/T}\to P_kT_{U_2}.\)
We are going to prove that 
\begin{equation}\label{eq:GoalResWronsk}
\w_{\infty}(P_kT_{U/T})=(p_2^k)^{-1}\w_{\infty}(P_kT_{U_2}).
\end{equation}
Since \(\w_{\infty}(P_kT_{U/T})=\w_{\infty}(\X^{\rm rel}_k)_{\pi_{0,k}^{-1}(U)}\), this will conclude the proof at once. Indeed, this will imply that 
\[(\pi_{0,k}\circ\nu_k)^{-1}(U)=\hat{P}_kT_{U/T}\cong U_1\times \hat{P}_kT_{U_2},\]
and since moreover, \(\hX_{t_0,k}\cap (\pi_{0,k}\circ \nu_k)^{-1}(U)\cong \hat{P}_kT_{U_2}\) the result will follow.

To prove  \eqref{eq:GoalResWronsk}, we take \(w\in \pi_{0,k}^{-1}(U)=P_kT_{U/T}\)  and   prove the desired equality at the level of stalks at \(w\). Set \(x=\pi_{0,k}(w)\). From Lemma \ref{lem:WronskienLocal}, it follows that \(\w_\infty(P_kT_{U/T})_w\) is spanned by the Wronskians of the form \(\omega^{\rm rel}_w(f_0,\dots, f_k)\) where \(f_0,\dots, f_k\in\OO_{\X,x}\), and that \(\w_{\infty}(P_kT_{U_2})_{p_2^k(w)}\) is spanned by Wronskians of the form \(\omega_{p^k_2(x)}(g_0,\dots, g_k)\) where \(g_0,\dots, g_k\in \OO_{U_2,p_2(x)}\). Observe that for any \(g_0,\dots, g_k\in \OO_{U_2,p_2(x)}\), one has 
\[(p_{2,w}^k)^*\omega_{p^k_2(w)}(g_0,\dots, g_k)=\omega^{\rm rel}_w(p_{2,x}^*g_0,\dots, p_{2,x}^*g_k)\in \OO_{\X_k^{\rm rel},w}.\]
Where \((p_{2,w}^k)^*:\OO_{P_kT_{U_2},p^k_2(w)}\to \OO_{P_kT_{U/T},w}\) and \(p_{2,x}^*:\OO_{U_2,p_2(x)}\to \OO_{U,x}\) are induced by \(p_{2}^k\) and \(p_2\). This proves already that the left hand side of \eqref{eq:GoalResWronsk} contains the right hand side. Take coordinates \((\underline{t})\) centered at \(\rho(x)=p_1(x)\in U_1\) and coordinates \((\underline{z})\) centered at \(p_2(x)\in U_2\). These induce coordinates \((\underline{t},\underline{z})\) on \(U\) centered at \(x\). Observe that for any \(I_0,\dots, I_k\in \N^{\dim T}\) and any \(J_0,\dots, J_k\in \N^n\) one has
\begin{equation}\label{eq:tConstants}
\omega^{\rm rel}_w(t^{I_0}z^{J_0},\dots, t^{I_k}z^{J_k})=t^{I_0+\cdots+I_k}\omega^{\rm rel}_w(z^{J_0},\dots, z^{J_k})=t^{I_0+\cdots+I_k}(p^k_{2,w})^*\omega_{p_2^k(w)}(z^{J_0},\dots, z^{J_k}).
\end{equation}
This follows from the fact that the computation takes place in the relative jet-space, so that one can consider \(t_1,\dots, t_{\dim T}\) as constants, from which the formula follows by multilinearity.  This implies in particular that for any \(P_0,\dots, P_k\in \C[\underline{t},\underline{z}]\), 
\[\omega_w(P_0,\dots, P_k)\in (p_{2,w}^k)^{-1}\big(\w_\infty(P_kT_{U_2})_{p_2^k(w)}\big)= \big((p_{2,w}^k)^{-1}\w_\infty(P_kT_{U_2})\big)_w.\]
From this, \eqref{eq:GoalResWronsk}  follows from Krull's intersection theorem, as in the proof of Lemma \ref{lem:WronskienLocal}.
\end{proof}
\begin{remark} Let us mention that, as was pointed out to us by O. Benoist,  if we take \(\hat{P}_kV\to P_kV\) to be a resolution of the Wronskian ideal sheaf obtained by using a resolution algorithm that commutes with smooth morphisms in the analytic category (as constructed in \cite{Kol07}), then Proposition  \ref{prop:FamilyBlow-up} would still be valid. With this at hand, one could make the rest of the paper with this definition, this wouldn't change anything except that  the proof of \emph{Theorem \ref{thm:nef}\(\Rightarrow\) Main Theorem} in Section \ref{sse:Setting} below would be slightly more involved. While this would allow us to work only with non-singular varieties, we prefer to use the more elementary definition of \(\hat{P}_kV\) above.
\end{remark}
A key point in the proof of the main theorem is the use of a property which is strictly stronger than hyperbolicity and which is Zariski open. This is precisely condition \eqref{eq:kjethyperbolicity} in the following proposition.
\begin{proposition}\label{prop:k-jethyperbolic}
Let \(X\) be a smooth projective variety. If
\begin{equation}\label{eq:kjethyperbolicity}
\exists a_1,\dots,a_k,q\in \N \ \ \text{such that}\ \  \nu_k^*\OO_{X_k}(a_k,\dots,a_1)\otimes \OO_{\hX_k}(-qF)\ \ \text{is ample},\tag{$\ast$}
\end{equation}
then \(X\) is hyperbolic. Moreover, property \eqref{eq:kjethyperbolicity} is a Zariski open property. Namely, given a smooth projective morphism \(\X\stackrel{\rho}{\to}T\) between quasi-projective varieties, if there exists \(t_{0}\in T\) such that \(X_{t_0}\) satisfies \eqref{eq:kjethyperbolicity} then, for general \(t\in T\), \(X_t\) satisfies 
\eqref{eq:kjethyperbolicity}.\end{proposition}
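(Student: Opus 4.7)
The plan has two independent parts: for the hyperbolicity implication, my strategy is to apply Theorem \ref{thm:FundamentalVanishing} after lifting a hypothetical entire curve all the way up to $\hX_k$; for Zariski openness, the strategy is to invoke the semicontinuity of ampleness in the relative family, using Proposition \ref{prop:FamilyBlow-up} as the key input.

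For the hyperbolicity part, assume for contradiction that there is a non-constant entire $f:\C\to X$ with canonical lift $f_{[k]}:\C\to X_k$. First I would construct a further lift $\hat f:\C\to \hX_k$: since $\Supp\w_\infty\subset X_k^{\rm sing}$ by Lemma \ref{lem:RegularBpf} and $f_{[k]}$ meets $X_k^{\rm sing}$ only on the discrete zero set of $f'$, the pullback $f_{[k]}^{-1}\w_\infty$ is a non-zero ideal on the smooth curve $\C$ supported on a discrete set, hence invertible, and the universal property of the blow-up provides $\hat f$. Next, fix any ample $A$ on $X$; the hypothesis ensures that for $m\gg0$ the line bundle $\bigl(\nu_k^*\OO_{X_k}(a_k,\dots,a_1)\otimes \OO_{\hX_k}(-qF)\bigr)^m\otimes \nu_k^*\pi_{0,k}^*A^{-1}$ is ample, hence globally generated, on the projective variety $\hX_k$. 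Given a global section $\sigma$ of this bundle, I would multiply by $s_F^{mq}$ (the $mq$-th power of the canonical section of $\OO(F)$) and push down along $\nu_k$ (using $\nu_{k*}\OO_{\hX_k}=\OO_{X_k}$) to obtain a section $\omega$ of $\OO_{X_k}(ma_k,\dots,ma_1)\otimes \pi_{0,k}^*A^{-1}$ on $X_k$. Theorem \ref{thm:FundamentalVanishing} applied to $\omega$ yields $f_{[k]}(\C)\subset(\omega=0)$, so $\sigma\cdot s_F^{mq}$ vanishes on $\hat f(\C)$; since $\hat f(\C)$ meets $\Supp F$ in only a discrete set, the factor $s_F^{mq}$ is nonzero on a dense subset of $\hat f(\C)$ and hence $\sigma$ itself vanishes on $\hat f(\C)$. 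Running this argument over a globally generating family contradicts the non-emptiness of $\hat f(\C)$, and we conclude that $f$ is constant.

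For Zariski openness, I would introduce the line bundle $\hat L\bydef \nu_k^*\OO_{\X_k^{\rm rel}}(a_k,\dots,a_1)\otimes \OO_{\hat\X_k^{\rm rel}}(-qF)$ on the relative blow-up $\hat\X_k^{\rm rel}\to T$. Proposition \ref{prop:FamilyBlow-up} identifies the fiber over each $t\in T$ with $\hX_{t,k}$ and the restriction $\hat L|_{\hX_{t,k}}$ with the line bundle appearing in condition \eqref{eq:kjethyperbolicity} for $X_t$; moreover, the local product structure exhibited in the proof of that proposition shows that $\hat\X_k^{\rm rel}\to T$ is proper (being projective) and flat. Hypothesis \eqref{eq:kjethyperbolicity} for $X_{t_0}$ says $\hat L$ is ample on the fiber over $t_0$, and by the classical theorem on openness of ampleness in proper flat families, this persists over a Zariski open neighborhood of $t_0$, which is the required conclusion.

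The main subtlety, in my view, will be the translation step in the first part, namely keeping track of what happens on $\hX_k$ versus on $X_k$: the sections produced by ampleness live naturally on $\hX_k$, while Theorem \ref{thm:FundamentalVanishing} controls only sections on $X_k$, so one must descend via $\nu_k$ and then re-pull back, checking that vanishing of the pushforward on $f_{[k]}(\C)$ forces vanishing of the original $\sigma$ on \emph{all} of $\hat f(\C)$, including at the (discretely many) points lying over the exceptional divisor. Everything else is rather formal.
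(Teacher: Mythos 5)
Your proposal is correct and follows essentially the same route as the paper: ampleness yields (after twisting by \(\nu_k^*\pi_{0,k}^*A^{-1}\) and taking a large power) globally generated sections on \(\hX_k\) which, multiplied by the appropriate power of \(F\), descend to jet differentials on \(X_k\) vanishing along an ample divisor, so Theorem \ref{thm:FundamentalVanishing} together with Lemma \ref{lem:RegularBpf} forces \(f_{[k]}(\C)\) into \(X_k^{\rm sing}\), while the openness is deduced, exactly as in the paper, from Proposition \ref{prop:FamilyBlow-up} and the openness of ampleness in proper families. The only phrase to adjust is ``is ample, hence globally generated'': ampleness of a single line bundle does not by itself imply global generation, but the intended (and correct) fact is that \(L^m\otimes \nu_k^*\pi_{0,k}^*A^{-1}\) is globally generated for \(m\gg0\) whenever \(L\) is ample, by Serre's theorem.
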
 
\begin{proof}
If \eqref{eq:kjethyperbolicity} is satisfied, then one can find integers \(b_1,\dots, b_k,s\in \N\) and an ample line bundle \(A\) on \(X\) such that 
\[\nu_k^*\left(\OO_{X_k}(b_k,\dots,b_1)\otimes \pi_{0,k}^*A^{-1}\right)\otimes \OO_{\hX_k}(-sF)\]
is base point free. From this one sees that multiplication by \(sF\) induces a linear map
\[H^0\left(\hX_k,\nu_k^*\left(\OO_{X_k}(b_k,\dots,b_1)\otimes \pi_{0,k}^*A^{-1}\right)\otimes \OO_{\hX_k}(-sF)\right)\stackrel{\cdot sF}{\to} H^0\left(\hX_k,\nu_k^*\left(\OO_{X_k}(b_k,\dots,b_1)\otimes \pi_{0,k}^*A^{-1}\right)\right),\]
which defines a linear system \(S\bydef{\rm im}(\cdot sF)\) whose base locus \(\Bs(S)\) is included (set theoretically) in \(\Supp(F)\). But this implies, by Lemma \ref{lem:RegularBpf}, that the induced linear system 
\[(\nu_k)_*S\subset H^0\left(X_k,\OO_{X_k}(b_k,\dots,b_1)\otimes\pi_{0,k}^*A^{-1}\right)\]
satisfies \(\Bs((\nu_k)_*S)\subset \Supp\left(\OO_{X_k}/\w_{\infty}(X_k)\right)\subset X_k^{\rm sing}\). Therefore one has in particular that 
\begin{equation}\label{eq:emptyGG}
\Bs\big(\OO_{X_k}(b_k,\dots,b_1)\otimes\pi_{0,k}^*A^{-1}\big)\subset X_k^{\rm sing}.
\end{equation}
Now, if \(f:\C\to X \) is a non-constant entire curve, then Theorem \ref{thm:FundamentalVanishing} (with \(V=T_X\)) implies that 
\[
f_{[k]}(\C)\subset \Bs\big(\OO_{X_k}(b_k,\dots,b_1)\otimes\pi_{0,k}^*A^{-1}\big)\subset X_k^{\rm sing},
\]
which is a impossible since \(f\) is non-constant. From this one deduces that \(X\) is hyperbolic. The second part of the statement, about the Zariski openness,  follows immediately from Proposition \ref{prop:FamilyBlow-up} and the openness property of ampleness.
\end{proof}
\begin{remark} Let us mention that this argument actually proves that condition \eqref{eq:kjethyperbolicity} implies that the Green-Griffiths locus of \(X\), as defined in \cite{D-R13} is empty, this is a direct consequence of \eqref{eq:emptyGG}. In view of Theorem \ref{thm:FundamentalVanishing}, this last condition is well known to imply hyperbolicity (by the  above argument),  and it is in fact a strictly stronger condition, as is explained in \cite{D-R13}.
\end{remark}
\section{Proof of the main theorem}\label{se:SectionProof}

\subsection{Setting}\label{sse:Setting}
Let us introduce the framework in which we will work from now on. Let \(X\) be a smooth \(n\)-dimensional projective variety  and let \(A\) be an ample line bundle on \(X\). Fix integers \(N,k\) such that \(N\geq n\geq 2\) and \(k\geq N-1\). The integer \(N\) should be thought of as the number of ``variables'', and the integer \(k\) as the jet order.

 Let us emphasize that in order to prove the Main Theorem, one could restrict ourselves to the case \(N=n\) and \(k=n-1\). Nevertheless, in view of possible further developments, we work in a slightly greater generality.

 Take \(\nu_k:\hX_k\to X_k\) and \(\OO_{\hX_k}(-F)\) as in Section \ref{sse:Resolution}. Take \(\p_0\in \N\) such that \(A^\p\) is very ample for any \(\p\geq \p_0\). Fix two integers \(\p,\q\geq \p_0\). The reader interested in the case when \(A\) is very ample can take \(\p_0=\p=\q=1\) in the rest of this article.   Let us now fix \(\tau_0,\dots, \tau_N\in H^0(X,A^{\p})\) in general position. 
 Fix also integers \(\varepsilon,\delta,r\geq 1\). Set  \(\mathbb{I}\bydef\{I=(i_0,\dots,i_N)\midbar |I|=\delta\}\). We are going to focus on hypersurfaces of \(X\) defined by sections of the form
\begin{equation}
F(\mathbf{a})\bydef \sum_{I\in \I}a_I\tau^{(r+k)I}\in H^0(X,A^{\q\varepsilon +(r+k)\p\delta}),
\end{equation}
where for all \(I\in \I\), \(a_I\in H^0(X,A^{\q\varepsilon})\), so that \(\mathbf{a}\bydef(a_I)_{I\in \I}\in \A \bydef \bigoplus_{I\in \I}H^0(X,A^{\q\varepsilon})\). Here we used the multi-index notation \(\tau^I=\tau_0^{i_0}\cdots\tau_N^{i_N}\) for \(I=(i_0,\dots, i_N)\).
Consider the universal family 
\[\H\bydef\left\{(\a,x)\in \A\times X\midbar F(\a)(x)=0\right\}.
\]
Let us denote by \(\rho:\H\to \A\) the natural projection. For any \(\a\in \A\), set \(H_{\a}\bydef \rho^{-1}(\a)\), and let us consider the smooth locus
\(\A_{\rm sm}\bydef \{\a\in \A\midbar H_{\a} \ \text{is smooth}\}\) which is a non-empty Zariski open subset of \(\A\). Let us also denote by \(\rho:\H\to \A_{\rm sm}\) the  restricted family. One has inclusions \(\H_{k}^{\rm rel}\subset \A_{\rm sm}\times X_k\) and from \eqref{functoriality} one has an inclusion \(\hat{\H}_k^{\rm rel}\subset \A_{\rm sm}\times \hX_k\). Denoting by \(\hat{\rho}_{k}:\hat{\H}^{\rm rel}_k\to \A_{\rm sm}\) the natural projection, in view of Proposition \ref{prop:FamilyBlow-up}, one obtains that for any \(\a\in \A_{\rm sm}\), \(\hat{H}_{k,\a}\bydef \hat{\rho}_{k}^{-1}(\a)\cong \hat{H}_{\a,k}\subset \hX_k\).

With the notation of \eqref{eq:WronskianStab}, let us set \(m_{\infty}\bydef m_\infty(X_k,A^{\q})\). The aim of the rest of this paper is to prove the following result.
\begin{theorem}\label{thm:nef} Take \(\p,\q\geq \p_0\). Suppose \(N\geq n\), \(k\geq N-1\), \(\varepsilon \geq m_{\infty}\) and \(\delta\geq n(k+1)\). There exists \(M= M(N,k,\delta)\in \N\),  and \(r(\p,\q,M,N,k,\varepsilon,\delta)\in \N\)  such that if \(r\geq r(\p,\q,M,N,k,\varepsilon, \delta)\), then
there exists a non-empty Zariski open subset \(\A_{\rm nef}\subset \A_{\rm sm}\) such that for any \(\a\in \A_{\rm nef}\) the line bundle
\[\nu_{k}^*\Big(\OO_{X_k}\big(Mk'\big)\otimes \pi_{0,k}^*A^{-1}\Big)\otimes \OO_{\hat{X}_k}(-MF)|_{\hat{H}_{k,\a}}\]
is nef on \(\hat{H}_{k,\a}\).
\end{theorem}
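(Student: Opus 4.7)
The plan is to exhibit enough global sections of
\[
L_M \bydef \nu_k^*\bigl(\OO_{X_k}(Mk') \otimes \pi_{0,k}^* A^{-1}\bigr) \otimes \OO_{\hX_k}(-MF)
\]
on $\hat{H}_{\a,k}$, for generic $\a$ and $r$ large, to deduce nefness from a base-point-freeness statement on a universal family. Following the heuristic of display~\eqref{eq:differential}, the idea is to construct a rational map from $\hat{H}_{\a,k}$ to a universal family $\Y$ of complete intersections in $\P^N$ and pull back sections from $\Y$.

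First I would set up the universal side. Let $\Phi \colon X \to \P^N$ be the morphism induced by $\tau_0,\ldots,\tau_N$, so that the monomials $T^I \bydef \tau^{rI}$ (for $|I|=\delta$) correspond to degree-$\delta$ forms on $\P^N$ via an $r$-uple Veronese. Let $\Y \subset \Gr_{k+1}\bigl(H^0(\P^N,\OO_{\P^N}(\delta))\bigr) \times \P^N$ be the universal incidence variety of a $(k+1)$-tuple of degree-$\delta$ forms with a common zero, with projections $q_1, q_2$ and Plücker line bundle $\Q$. Since $k+1 \geq N$, the projection $q_1$ is generically finite on its image, so $q_1^*\Q$ is big and nef; Kodaira's lemma then yields an integer $M = M(N,k,\delta)$ making $\bigl|q_1^*\Q^M \otimes q_2^*\OO_{\P^N}(-1)\bigr|$ nonempty and base-point-free on a dense open subset of $\Y$.

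Next I would translate display~\eqref{eq:differential} into an honest rational map $\Psi_\a \colon \hat{H}_{\a,k} \dashrightarrow \Y$. Iterating the generalised Leibniz rule on $F(\a) = \sum_I a_I \tau^{(r+k)I}$ and using that $d^{[p]}F(\a)$ vanishes on jets tangent to $H_\a$ for $0 \leq p \leq k$, one gets $(k+1)$ linear relations $\sum_I \alpha_I^p T^I = 0$ with $\alpha_I^p = \tilde{a}_I^p\, \tau^{(r+k-p)I}$, where $\tilde{a}_I^p$ is a universal polynomial combination of $a_I$ and its iterated differentials of order at most $p$. Evaluating the matrix $(\alpha_I^p(w))$ at a jet $w$ and taking its row span, together with $\Phi(\pi_{0,k}(w)) \in \P^N$, defines $\Psi_\a$. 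A $(k+1)\times(k+1)$ minor of $(\alpha_I^p)$, which is a Plücker coordinate of the span, factors as a monomial in $\tau$ times a Wronskian $\omega^{\rm rel}(\tau^{J_0},\ldots,\tau^{J_k})$ of the type of Section~\ref{sse:Wronskien}. By Proposition~\ref{prop:WronskianGluing} and the definition of $\hom$ in~\eqref{annulation}, such Wronskians lift to sections twisted by $\OO_{X_k}(k') \otimes \OO_{\hX_k}(-F)$ at the cost of a positive $A$-twist, which is absorbed by the factor $q_2^*\OO_{\P^N}(-1)$ together with the choice of $r$ large. A bookkeeping of the exponents in $\p,\q,\varepsilon,\delta,M,N,k$ yields the threshold $r(\p,\q,M,N,k,\varepsilon,\delta)$ at which $\Psi_\a^*\bigl(q_1^*\Q^M \otimes q_2^*\OO_{\P^N}(-1)\bigr)$ embeds into the global sections of $L_M|_{\hat{H}_{\a,k}}$.

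Finally, for nefness one needs the pulled-back linear system to have base locus contained in regions where nefness is automatic. The map $\Psi_\a$ is defined wherever $(\alpha_I^p)$ has maximal rank $k+1$; outside this locus, Lemma~\ref{lem:RegularBpf} (applicable since $\varepsilon \geq m_\infty$) confines the remaining base points to $X_k^{\rm sing}$, which is swallowed by the divisor $F$ after blow-up. For generic $\a \in \A_{\rm sm}$, $\Psi_\a$ is dominant onto $\Y$, so the base locus meets $\hat{H}_{\a,k}$ only in a proper subvariety supported on $F \cup X_k^{\rm sing}$, forcing nefness of $L_M|_{\hat{H}_{\a,k}}$ and carving out a nonempty Zariski open $\A_{\rm nef} \subset \A_{\rm sm}$ by semicontinuity of the relative base locus in the family $\hat{\H}_k^{\rm rel} \to \A_{\rm sm}$. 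The main obstacle is this last genericity step: showing that for general $\a$ the matrix $(\alpha_I^p)$ has maximal rank on a dense open of $\hat{H}_{\a,k}$ and that $\Psi_\a$ is dominant. This will use $\delta \geq n(k+1)$ to ensure the Grassmannian factor has large enough dimension to be covered, together with an infinitesimal analysis showing that varying the $a_I$ realises all possible rows of the matrix.
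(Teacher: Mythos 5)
Your overall architecture matches the paper's: Wronskian sections of the form $\hom_{I_0,\dots,I_k}$ define a map to $\Gr_{k+1}(\C^{\I})\times\P^N$ landing in the universal complete-intersection family $\Y$, and positivity of $q_1^*\Q$ is pulled back, with the isolated $-\p r$ in the exponent absorbing the positive $A$-twist for $r$ large. But the technical core of your last step is wrong in two ways. First, $\Psi_\a$ cannot be dominant onto $\Y$: the Grassmannian factor has dimension $(k+1)\bigl(\binom{N+\delta}{\delta}-k-1\bigr)$, vastly exceeding $\dim \hat{H}_{\a,k}$. What is actually needed --- and what the paper proves --- is the much more delicate statement that for general $\a$ the image of $\hat{H}_{k,\a}$ avoids the locus $E_J$ where $q_1$ has positive-dimensional fibers. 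This requires (i) Benoist's theorem that the non-finite locus $\mathbb{V}^\infty_{2,J}$ has codimension $\geq \delta+1$ in $(\C^{\I_J})^{k+1}$, and (ii) the computation (Lemma \ref{lem:Rank}) that the linearized map $\a\mapsto(\ell^0_\bullet,\dots,\ell^k_\bullet)$ composed with the restriction $\rho_J$ is surjective onto $(\C^{\I_J})^{k+1}$, so that a dimension count over $\hX_{k,J}$ excludes a general $\a$ from hitting $\mathbb{V}^\infty_{2,J}$. Your "infinitesimal analysis showing that varying the $a_I$ realises all possible rows" gestures at (ii) but you never connect it to the codimension bound (i), without which no genericity argument closes.

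Second, the inference "base locus of $L_M$ is a proper subvariety supported on $F\cup X_k^{\rm sing}$, forcing nefness" is invalid: a line bundle can fail to be nef on curves contained in its base locus. The paper instead tests nefness curve by curve: given an irreducible $C\subset\hat{H}_{k,\a}$, one takes the unique stratum $J$ (determined by which $\tau_j$ vanish on $\pi_{0,k}\circ\nu_k(C)$) such that $C$ generically lies over $X_J$, shows via Proposition \ref{prop:FactoY} that $\hat\varPsi|_C$ factors through $\Y_J$, and via Lemma \ref{lem:AvoidingE} that $\hat\varPsi(C)\not\subset E_J$; Nakamaye's theorem identifies $E_J$ with $\Bs\bigl(q_1^*\Q^m\otimes q_2^*\OO_{\P_J}(-1)|_{\Y_J}\bigr)$ for $m\geq m_J$, whence $C$ has non-negative degree. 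This stratification by $J\subset\{0,\dots,N\}$ is entirely absent from your proposal, yet it is unavoidable: at points where some $\tau_j$ vanish the image point $[\tau^r]$ lies in the coordinate subspace $\P_J$ and the relevant positivity is that of $q_1^*\Q|_{\Y_J}$, which is why $M$ must be taken as $\max_J m_J$. Your appeal to "Kodaira's lemma" giving base-point-freeness only on a dense open subset of $\Y$ is too weak to run either version of the argument.
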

Let us first explain how this theorem implies our main result.
\begin{proof}[Theorem \ref{thm:nef}\(\Rightarrow\) Main Theorem] 
Since \(H_{\a,k}\cong H_{k,\a}\subset \hX_k\) and that   \(\OO_{\hat{X}_k}(-MF)|_{\hat{H}_{k,\a}}\cong \OO_{\hat{H}_{\a,k}}(-MF)\), the conclusion of the theorem implies, after tensoring by suitable line bundles, that \(H_{\a}\) satisfies property \eqref{eq:kjethyperbolicity} for any \(\a\in \A_{\rm nef}\). By Proposition \ref{prop:k-jethyperbolic}, one deduces  that for \(\p,\q,\varepsilon,\delta, r\)  as above, general hypersurfaces in \(|A^{\q\varepsilon+(r+k)\p\delta}|\) satisfy property \eqref{eq:kjethyperbolicity} and are therefore hyperbolic.

 To conclude the proof, it suffices to show, by adjusting the different exponents, that this gives the result for general hypersurfaces in \(|A^{d}|\) for all \(d\) large enough. This can be seen as follows. Take \(\delta=n(k+1)\), \(\p=\p_0\) and \(\q\geq \p_0\) such that \(\gcd (\q,\p\delta)=1\). Take \(R\bydef\max\left\{r(\p,\q,M,N,k,\varepsilon,\delta)\midbar m_{\infty}\leq \varepsilon<m_{\infty}+\p\delta\right\}\) and set \(d_0\bydef \q(m_{\infty}+\p\delta)+(R+k)\p\delta\). We will show the result holds for any \(d\geq d_0\).

It suffices to prove that any integer \(d\geq d_0\) can be written as \(d=\q\varepsilon+(r+k)\p\delta\) for \(r\geq R\) and \(m_{\infty}\leq \varepsilon< m_{\infty}+\p\delta\).  For \(d\geq d_0\), take \(\varepsilon\) to be the unique element in \(\{m_{\infty},\dots, m_{\infty}+\p\delta-1\}\) such that \(\q\varepsilon \equiv d \ [\p\delta]\), which is possible since \(\gcd(\q,\p\delta)=1\). Then \(d-\q\varepsilon=t \p\delta\) for some \(t\in \mathbb{Z}\). But since \(d\geq d_0\), one has \(t \p\delta\geq (R+k)\p\delta\), and it suffices to take \(r=t-k\geq R\) to conclude the proof. 
\end{proof}
\subsection{Maps to the Grassmanian} In this entire section,  take \(N\geq n\geq 2\) and \(k\geq 1\). Note that the hypothesis on  \(k\) is less restrictive  than the hypothesis of Theorem \ref{thm:nef}, while this is useless for the proof of that theorem, we do this in order to present the results of this section in there correct generality.  The main idea in the proof of the positivity statement in Theorem \ref{thm:nef} is to construct a  map from \(\hat{\H}_k^{\rm rel}\) to a suitable generically finite family and to use the positivity of the tautological bundle on the parameter space of this family. Before doing so, we need some preliminaries which we describe in this section.
Let us start with several computational lemmata. 
\begin{lemma}\label{lem:localfacto}  Let \(U\) be an open subset of \(X\) on which \(A\) can be trivialized, and fix such a trivialization. Take \(I=(i_0,\dots,i_N)\). For any \(0\leq p\leq k\) there exists a \(\C\)-linear map
\[d^{[p]}_{I,U}:H^0(X,A^{\q\varepsilon})\to \OO\left(p_k^{-1}(U)\right)\]
such that for any \(a\in H^0(X,A^{\q\varepsilon})\),
\(d_{U}^{[p]}(a\tau^{(r+k)I})=\tau_U^{rI}d^{[p]}_{I,U}(a).\)
\end{lemma}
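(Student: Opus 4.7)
The statement is essentially a computational one, reducing to the generalized Leibniz rule for $d_U^{[p]}$ combined with the observation that $p\leq k$ forces each differentiation to leave a high enough power of the $\tau_j$'s.

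The plan is as follows. First I would iterate the Leibniz rule
\(d_U^{[p]}(fg)=\sum_{i=0}^p\binom{p}{i}d_U^{[i]}(f)d_U^{[p-i]}(g)\)
to expand
\[
d^{[p]}_U\bigl(a\tau_U^{(r+k)I}\bigr)=\sum_{p_0+p_1+\cdots+p_{N+1}=p}\binom{p}{p_0,p_1,\dots,p_{N+1}}\,d^{[p_0]}_U(a)\,\prod_{j=0}^{N}d^{[p_{j+1}]}_U\!\bigl(\tau_{j,U}^{(r+k)i_j}\bigr),
\]
where $\tau_{j,U}\in\OO(U)$ is the local expression of $\tau_j$ under the fixed trivialization of $A$. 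Since the map $a\mapsto d^{[p_0]}_U(a)$ is $\C$-linear, everything else on the right-hand side is independent of $a$, so the sum yields a $\C$-linear map in $a$; it therefore suffices to produce a factorization of each inner term by $\tau_U^{rI}$.

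The key computational step is the following claim: for any positive integers $n,q$ with $q\leq n$, and any $\phi\in\OO(U)$, one has
\[
d^{[q]}_U\bigl(\phi^{n}\bigr)\ \in\ \phi^{\,n-q}\cdot\OO\!\left(p_k^{-1}(U)\right).
\]
This is immediate from iterating the Leibniz rule once more:
\[
d^{[q]}_U(\phi^{n})=\sum_{q_1+\cdots+q_n=q}\binom{q}{q_1,\dots,q_n}\,d^{[q_1]}_U(\phi)\cdots d^{[q_n]}_U(\phi),
\]
since in every multi-index $(q_1,\dots,q_n)$ with $\sum q_i=q\leq n$ at least $n-q$ of the $q_i$ vanish, and $d^{[0]}_U(\phi)=\phi$. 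Applying this with $\phi=\tau_{j,U}$, $n=(r+k)i_j$ and $q=p_{j+1}$, the hypothesis $p\leq k$ gives $p_{j+1}\leq p\leq k\leq ki_j$ whenever $i_j\geq 1$, so $n-q\geq ri_j$ and $\tau_{j,U}^{ri_j}$ divides $d^{[p_{j+1}]}_U(\tau_{j,U}^{(r+k)i_j})$ in $\OO(p_k^{-1}(U))$. If $i_j=0$ the factor is $1$ unless $p_{j+1}>0$, in which case the whole term vanishes because $d^{[p_{j+1}]}_U(1)=0$; either way $\tau_{j,U}^{ri_j}=1$ trivially divides the factor.

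Putting these together, I would factor the monomial $\tau_U^{rI}=\prod_j\tau_{j,U}^{ri_j}$ out of every summand of the Leibniz expansion and define
\[
d^{[p]}_{I,U}(a)\;\bydef\; \tau_U^{-rI}\,d^{[p]}_U\bigl(a\tau_U^{(r+k)I}\bigr),
\]
which by the above belongs to $\OO(p_k^{-1}(U))$ and is manifestly $\C$-linear in $a$. There is no real obstacle here: the only point to be careful about is to correctly handle the indices $j$ with $i_j=0$ (which is what the elementary $d^{[q]}_U(1)=0$ observation takes care of) and to verify that the bound $p\leq k$ is exactly what guarantees the factorization throughout the multinomial expansion.
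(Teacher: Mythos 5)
Your proof is correct and rests on the same mechanism as the paper's one-line argument: each application of the Leibniz rule can lower the exponent of each \(\tau_{j,U}\) by at most one, so after \(p\leq k\) differentiations at least \(r i_j\) powers of each factor survive. The paper phrases this as an induction on \(p\) producing a factorization \(d^{[p]}_U(a\tau^{(r+k)I})=\tau_U^{(r+k-p)I}\tilde a\), whereas you write out the closed-form multinomial expansion; the content is the same, and your handling of the indices with \(i_j=0\) and of the holomorphicity and linearity of the quotient is fine.
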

\begin{proof}
By induction, there exists \(\tilde{a}\) such that \(d^{[p]}_U(a\tau^{(r+k)I})=\tau_U^{(r+k-p)I}\tilde{a}\), it suffices then to define \(d^{[p]}_{I,U}(a)\bydef \tau_U^{(k-p)I}\tilde{a}\).
\end{proof}

Therefore, given any open subset \(U\) any trivialization of \(A|_U\) as in Lemma \ref{lem:localfacto},  any  \(I_0,\dots, I_k\in \I\) and any \(a_{I_0},\dots, a_{I_k}\in H^0(X,A^{\q\varepsilon})\) one can define 
\begin{eqnarray}
W_{U,I_0,\dots, I_k}(a_{I_0},\dots,a_{I_k})\bydef
\left|\begin{array}{cccc}
d_{I_0,U}^{[0]}(a_{I_0}) & \cdots &d_{I_k,U}^{[0]}(a_{I_k})\\ 
\vdots & \ddots & \vdots \\
d_{I_0,U}^{[k]}(a_{I_0})& \cdots &d_{I_k,U}^{[k]}(a_{I_k})\\ 
\end{array}\right|
\in
\OO(p_k^{-1}(U)).
\end{eqnarray}
From Lemma \ref{lem:localfacto} one deduces at once that 
\[W_{U}\big(a_{I_0}\mathbf{\tau}^{(r+k)I_0},\dots,a_{I_k}\mathbf{\tau}^{(r+k)I_k}\big) =\tau_U^{r(I_0+\dots+I_k)}W_{{U},I_0,\dots, I_k}(a_{I_0},\dots,a_{I_k}).\]
Therefore from Proposition \ref{prop:WronskianGluing} one deduces the following.
\begin{lemma}\label{lem:homgluing}
For any  \(I_0,\dots, I_k\in \I\) and any \(a_{I_0},\dots, a_{I_k}\in H^0(X,A^{\q\varepsilon})\), the locally defined functions \(W_{{U},I_0,\dots, I_k}(a_{I_0},\dots,a_{I_k})\) glue together into a global section
\[
W_{I_0,\dots, I_k}(a_{I_0},\dots,a_{I_k})\in H^0\left(X,E_{k,k'}T_X^*\otimes A^{(k+1)(\q\varepsilon+k\p\delta)}\right),
\]
such that 
\(W\big(a_{I_0}\mathbf{\tau}^{(r+k)I_0},\dots,a_{I_k}\mathbf{\tau}^{(r+k)I_k}\big) =\tau^{r(I_0+\dots+I_k)}W_{I_0,\dots, I_k}(a_{I_0},\dots,a_{I_k}).\)
\end{lemma}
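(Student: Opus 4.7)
The strategy is to realize the claimed global section $W_{I_0,\dots,I_k}(a_{I_0},\dots,a_{I_k})$ as the quotient of the global Wronskian produced by Proposition \ref{prop:WronskianGluing} by the global section $\tau^{r(I_0+\cdots+I_k)}$, with Lemma \ref{lem:localfacto} ensuring that this quotient is everywhere holomorphic. Concretely, I would first apply Proposition \ref{prop:WronskianGluing} to the sections $a_{I_0}\tau^{(r+k)I_0},\dots,a_{I_k}\tau^{(r+k)I_k}\in H^0(X,A^{\q\varepsilon+(r+k)\p\delta})$ to get a global Wronskian
\[W(a_{I_0}\tau^{(r+k)I_0},\dots,a_{I_k}\tau^{(r+k)I_k})\in H^0\bigl(X,\ E_{k,k'}T_X^*\otimes A^{(k+1)(\q\varepsilon+(r+k)\p\delta)}\bigr).\]
Then, on any open $U\subset X$ trivializing $A$, applying Lemma \ref{lem:localfacto} column by column in the Wronskian determinant \eqref{eq:DefWronskien} and pulling out the common factor $\tau_U^{rI_j}$ from the $j$th column by multilinearity of the determinant yields the pointwise factorization
\[W_U(a_{I_0}\tau^{(r+k)I_0},\dots,a_{I_k}\tau^{(r+k)I_k})\;=\;\tau_U^{r(I_0+\cdots+I_k)}\cdot W_{U,I_0,\dots,I_k}(a_{I_0},\dots,a_{I_k})\]
in $\OO(p_k^{-1}(U))$.

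Next, I would observe that $\tau^{I_0+\cdots+I_k}$ is a non-identically-zero global section of $A^{\p(k+1)\delta}$, so that $\tau^{r(I_0+\cdots+I_k)}$ is a non-zero-divisor in each $\OO(p_k^{-1}(U))$ (since $X$ is integral and $p_k$ is a submersion). The local identity above therefore uniquely determines the factor $W_{U,I_0,\dots,I_k}(a_{I_0},\dots,a_{I_k})$; on any overlap $U_1\cap U_2$ the two candidate local expressions both satisfy the same factorization against the same globally defined Wronskian, so they must coincide and consequently glue into a section defined on all of $p_k^{-1}(X)$.

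To identify the correct twist, one simply tracks the transition cocycles: over an overlap of trivializations of $A$ with cocycle $g_{12}$, the numerator picks up a factor $g_{12}^{(k+1)(\q\varepsilon+(r+k)\p\delta)}$ while $\tau_U^{r(I_0+\cdots+I_k)}$ picks up $g_{12}^{r\p(k+1)\delta}$, so the quotient transforms as a section of $E_{k,k'}T_X^*\otimes A^{(k+1)(\q\varepsilon+k\p\delta)}$, which is exactly the bundle claimed. The final displayed identity in the lemma is then the globalization of the pointwise factorization above. I do not foresee any real obstacle once Proposition \ref{prop:WronskianGluing} and Lemma \ref{lem:localfacto} are in hand; the only mild subtlety is verifying that $\tau^{r(I_0+\cdots+I_k)}$ is a non-zero-divisor so that division is globally well-defined, which follows immediately from integrality of $X$.
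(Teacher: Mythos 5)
Your proof is correct and follows essentially the same route as the paper, which simply defines \(W_{U,I_0,\dots,I_k}\) by the determinant of the \(d^{[p]}_{I_j,U}(a_{I_j})\), notes the local factorization via Lemma \ref{lem:localfacto} and multilinearity, and deduces the gluing from Proposition \ref{prop:WronskianGluing}. Your extra care with the non-zero-divisor property of \(\tau^{r(I_0+\cdots+I_k)}\) and the cocycle bookkeeping just makes explicit what the paper leaves implicit.
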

Let us denote the global section induced via isomorphism \eqref{eq:IsomEkm} by 
\begin{equation*}\label{eq:DefOmegaHat}
\omega_{I_0,\dots, I_k}(a_{I_0},\dots,a_{I_k})\in H^0\big(X_k,\OO_{X_k}(k')\otimes \pi_{0,k}^*A^{(k+1)(\q\varepsilon+k\p\delta)}\big).
\end{equation*}
Note that  the line bundle involved doesn't depend on \(r\). With this, consider the rational map
\begin{eqnarray*}
\varPhi:\A\times X_k&\dashrightarrow&P\left(\Lambda^{k+1}\C^{\I}\right)\\
(\a,w)&\mapsto&\big(\left[\omega_{I_0,\dots,I_k}(a_{I_0},\dots,a_{I_k})(w)\right]\big)_{I_0,\dots,I_k\in \I},\nonumber
\end{eqnarray*}
where \(\C^{\I}\bydef\bigoplus_{I\in \I}\C\cong \C^{\binom{N+\delta}{\delta}}\). One can see that \(\varPhi\) factors through the Pl\"{u}cker embedding. Indeed, given \(w_0\in X_k\), take \(U_{w_0}\) and \((\gamma_w)_{w\in  U_{w_0}}\) as in \eqref{eq:LiftJets} and \(U\) as in Lemma \ref{lem:localfacto} such that \(U_{w_0}\subset \pi_{0,k}^{-1}(U)\) . For any \(\a=(a_I)_{I\in \I}\in \A\), any \(w\in U_{w_0}\) and any \(0\leq p\leq k\) let us denote by 
\[d^{[p]}_{\bullet,w_0}(\a,w)\bydef \left(d^{[p]}_{I,U}(a_I)([\gamma_w]_k)\right)_{I\in\I}\in\C^{\I} .\]
 This definition depends on the choice of \(w_0\), the choice of the family \((\gamma_w)\) and the choice of the trivialization of \(A\) over  \(U\). Nevertheless, one can  consider the rational map
\begin{eqnarray}
\varPhi_{w_0}:\A\times U_{w_0}&\dashrightarrow&\Gr_{k+1}\left(\C^{\I}\right)\label{eq:DescriptionPhi}\\ 
(\a,w)&\mapsto& \Span\left(d_{\bullet,w_0}^{[0]}(\a,w),\dots, d_{\bullet,w_0}^{[k]}(\a,w)\right).\nonumber
\end{eqnarray}
And one easily observes that if  \(\Pluc:\Gr_{k+1}\left(\C^{\I}\right)\hookrightarrow P\left(\Lambda^{k+1}\C^{\I}\right)\) denotes the Pl\"ucker embedding, then   one has \(\varPhi|_{U_{w_0}}=\Pluc \circ \varPhi_{w_0}\).  This proves that \(\varPhi\) factors through \(\Pluc\) and we will denote (slightly abusively) by \(\varPhi: \A\times X_k\dashrightarrow \Gr_{k+1}(\C^{\I})\) the induced map into the Grassmaniann. 

Our aim is  to prove that \(\nu_k\) partially resolves the singularities of \(\varPhi\). Recall that for any \(I_0,\dots, I_k\in \I\) and any \(a_{I_0},\dots, a_{I_k}\in H^0(X,A^{\q\varepsilon})\) one has
\begin{equation*}
\tau^{r(I_0+\cdots+I_k)}\omega_{I_0,\dots, I_k}(a_{I_0},\dots,a_{I_k})\in H^0\left(X_k,\OO_{X_k}(k')\otimes \pi_{0,k}^*A^{(k+1)(\q\varepsilon+(k+r)\p\delta)}\otimes \w_{\infty}(X_k)\right).\end{equation*}
But from Lemma \ref{lem:WronskienLocal}, we obtain that \(\tau^{r(I_0+\cdots+I_k)}\) doesn't vanish along any irreducible or embedded component of the scheme defined by \(\w_\infty(X_k)\). Form this one deduces that \(\omega_{I_0,\dots, I_k}(a_{I_0},\dots,a_{I_k})\) vanishes along \(\w_\infty(X_k)\),
 which implies the existence of a global section
\[
\hom_{I_0,\dots,I_k}(a_{I_0},\dots,{a_{I_k}})\in H^0\left(\hX_k,\nu_k^*\big(\OO_{X_k}(k')\otimes \pi_{0,k}^*A^{(k+1)(\q\varepsilon+k\p\delta)}\big)\otimes \OO_{\hX_k}(-F)\right)
\]
such that 
\[\nu_k^*\omega_{I_0,\dots,I_k}(a_{I_0},\dots,{a_{I_k}})=F\cdot \hom_{I_0,\dots,I_k}(a_{I_0},\dots,{a_{I_k}}).\]
From the multilinearity property of \(\hom_{I_0,\dots,I_k}(a_{I_0},\dots, a_{I_k})\) it makes sense to consider the rational map
\begin{eqnarray*}
\hPhi:\A\times \hX_k&\dashrightarrow& P\left(\Lambda^{k+1}\C^{\I}\right)\\
(\a,\hat{w})&\mapsto&\left[\big(\hom_{I_0,\dots,I_k}(a_{I_0},\dots,a_{I_k})(\hat{w})\big)_{I_0,\dots,I_k\in \I}\right].
\end{eqnarray*}
Observe that outside \(\Supp(F)\) one has \(\hPhi=\varPhi\circ \nu_k\), therefore, since \(\hX_k\) is irreducible, \(\hPhi\) also factors through the Pl\"ucker embedding, and denote also by \(\hPhi\) the obtained map
\[\hPhi:\A\times \hX_k\dashrightarrow \Gr_{k+1}\left(\C^{\I}\right).\]
We will need a local description for \(\hPhi\) similar to \eqref{eq:DescriptionPhi}.

\begin{lemma}\label{lem:LemmeFonda}
Suppose \(\varepsilon \geq m_{\infty}\). For any \(\hat{w}_0\in \hX_k\) there exists a open neighborhood \(\hat{U}_{\hat{w}_0}\subset \hX_k\) of \(\hat{w}_0\) satisfying the following. For any \(I\in \I\) and any \(0\leq p\leq k\), there exists a linear map
\[\ell^p_{I}:H^0(X,A^{\q\varepsilon})\to \OO(\hat{U}_{\hat{w}_0})\]
such that for any \((\a,\hat{w})\in \A\times \hat{U}_{\hat{w}_0}\), writting \(\ell_{\bullet}^p(\a,\hat{w})=\left(\ell^p_{I}(a_I)(\hat{w})\right)_{I\in \I}\in \C^{\I}\) one has: 
\begin{enumerate}
\item The Pl\"ucker coordinates of \(\hat\varPhi(\a,\hat{w})\) are all vanishing if and only if
 \[\dim\Span\left(\ell_{\bullet}^0(\a,\hat{w}),\dots, \ell_{\bullet}^k(\a,\hat{w})\right)<k+1.\]
\item If \(\dim\Span\left(\ell_{\bullet}^0(\a,\hat{w}),\dots, \ell_{\bullet}^k(\a,\hat{w})\right)=k+1,\) then 
 \[\hPhi(\a,\hat{w})=\Span\left(\ell_{\bullet}^0(\a,\hat{w}),\dots, \ell_{\bullet}^k(\a,\hat{w})\right)\in \Gr_{k+1}\left(\C^\I\right). 
\]
\end{enumerate}

\end{lemma}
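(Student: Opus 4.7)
The plan is to realize the $\ell^p_I$ via a Plücker--Cramer construction based on $k+1$ auxiliary sections whose Wronskian is non-vanishing near $\hat{w}_0$. Since $\varepsilon\geq m_{\infty}(X_k,A^{\q})$, equation \eqref{eq:TimesF} applied with $L=A^{\q}$ furnishes sections $s_0,\dots, s_k\in H^0(X,A^{\q\varepsilon})$ with $\hom(s_0,\dots, s_k)(\hat{w}_0)\neq 0$. I would choose $\hat{U}_{\hat{w}_0}\subset \hX_k$ to be an open neighborhood of $\hat{w}_0$ on which $\hom(s_0,\dots, s_k)$ is nowhere vanishing, fix local trivializations of all relevant line bundles, and set, for each $I\in \I$ and $0\leq p\leq k$,
\[
\ell^p_I(a)\bydef \hom_{(0,\dots,0,I,0,\dots,0)}(s_0,\dots, s_{p-1},a,s_{p+1},\dots, s_k),
\]
the multi-index having $I$ at position $p$ and the section $a$ occupying the $p$-th argument slot.

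The heart of the argument is to establish the identity
\[
\det\bigl(\ell^p_{I_q}(a_{I_q})\bigr)_{p,q}\;=\; u\cdot \hom_{I_0,\dots, I_k}(a_{I_0},\dots, a_{I_k})
\]
for a unit $u\in \OO(\hat{U}_{\hat{w}_0})^{*}$ independent of $\a$ and $(I_0,\dots, I_k)$. This reduces to the purely multilinear algebra identity
\[
\det\bigl(\det(w_0,\dots, w_{q-1}, u_p, w_{q+1},\dots, w_k)\bigr)_{p,q} = \det(u_0,\dots, u_k)\cdot \det(w_0,\dots, w_k)^{k},
\]
valid for any $2(k+1)$ vectors in a $(k+1)$-dimensional space (a direct consequence of Cramer's rule expressing each $u_p$ in the basis $(w_q)_q$). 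I would apply it with columns $u_q=(d^{[p']}_{I_q,U}(a_{I_q}))_{p'}$ and $w_q=(d^{[p']}_U(s_q))_{p'}$ to obtain the analogous identity for the Wronskian expressions $W_{I_0,\dots, I_k}$ and $W$ on $X_k$. Transporting it through the isomorphism \eqref{eq:IsomEkm}, pulling back by $\nu_k$, and substituting $\nu_k^{*}\omega = F\cdot \hom$ everywhere via equation \eqref{annulation}, each side acquires a factor $F^{k+1}$ that can be cancelled, leaving the desired identity with $u$ equal to $\hom(s_0,\dots, s_k)^{k}$ times the accumulated trivialization factors---a unit on $\hat{U}_{\hat{w}_0}$.

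Conclusions (1) and (2) then follow immediately: the $(k+1)\times (k+1)$ minors of the matrix $(\ell^p_I(a_I))_{p,I}$ coincide, up to the common factor $u(\hat{w})$, with the Pl\"ucker coordinates of $\hPhi(\a,\hat{w})$. Consequently, they vanish simultaneously iff $\dim\Span(\ell^0_\bullet(\a,\hat{w}),\dots, \ell^k_\bullet(\a,\hat{w}))<k+1$, giving (1); and when the span has dimension $k+1$, its Pl\"ucker coordinates determine it uniquely in $\Gr_{k+1}(\C^{\I})$, so it must coincide with $\hPhi(\a,\hat{w})$, giving (2). The most delicate point will be to track the various trivialization factors and to confirm that what survives after the cancellation of $F^{k+1}$ is genuinely a unit; but this is essentially forced by the consistency of the identity before cancellation, together with the nowhere-vanishing of $\hom(s_0,\dots,s_k)$ on $\hat{U}_{\hat{w}_0}$.
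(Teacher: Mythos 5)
Your proposal is correct and follows essentially the same route as the paper: there too one picks auxiliary sections $\tilde b_0,\dots,\tilde b_k\in H^0(X,A^{\q\varepsilon})$ with $\hom(\tilde b_0,\dots,\tilde b_k)(\hat w_0)\neq 0$, defines the $\ell^p_I$ by a Cramer-type formula (the paper normalizes by $G^{-1}$, i.e.\ divides your $\ell^p_I$ by the unit $\hom(b_0,\dots,b_k)$, which changes nothing for the span), and identifies the $(k{+}1)\times(k{+}1)$ minors of the $\ell$-matrix with $\hom_{I_0,\dots,I_k}(a_{I_0},\dots,a_{I_k})$ up to that unit, exactly your determinant identity. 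The one point you gloss over is that your mixed Wronskian pairs sections of different line bundles, so to see that it is a well-defined local section lying in $\w_\infty(X_k)$ --- hence divisible by $F$ after pullback --- the paper first replaces $\tilde b_j$ by $b_j=\tilde s^{(r+k)\delta}\tilde b_j$ for a local non-vanishing $\tilde s\in H^0(X,A^{\p})$ so that all entries live in $A^{\q\varepsilon+(r+k)\p\delta}$, and then uses that $\tau^{rI}$ vanishes on no component of the zero scheme of $\w_\infty(X_k)$ to divide it out; your sketch needs the same two observations to justify holomorphy of $\ell^p_I(a)$ on all of $\hat U_{\hat w_0}$.
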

\begin{proof} 
From \eqref{eq:TimesF} one knows that there exists \(\tilde{b}_0,\dots, \tilde{b}_k\in H^0(X,A^{\q\varepsilon})\) such that
\begin{equation}\label{eq:DefBaseb}
\hom(\tilde{b}_0,\dots, \tilde{b}_k)(\hat{w}_0)\neq 0.
\end{equation}
Set \(w_0=\nu_k(\hat{w}_0)\) and \(x=\pi_{0,k}(w_0)\). Take \(\tilde{s}\in H^0(X,A^{\p})\) such that \(\tilde{s}(x)\neq 0\), take an open neighborhood  \(U\subset (\tilde{s}\neq 0)\) of \(x\) and a trivialization of \(A|_U\). Set \(s=\tilde{s}^{(r+k)\delta}\in H^0(X,A^{(r+k)\p\delta})\) and  define \(b_0=s\tilde{b}_0,\dots, b_k=s\tilde{b}_k\in H^0(X,A^{\q\varepsilon+(r+k)\p\delta})\). Moreover, take a neighborhood \(U_{w_0}\subset X_k\) of \(w_0\) and a family \((\gamma_{w})_{w\in U_{w_0}}\) as in \eqref{eq:LiftJets}, we can suppose \(\pi_{0,k}(U_{w_0})\subset U\). Take a neighborhood \(\hat{U}_{\hat{w}_0}\) of \(\hat{w}_0\) on which \(\hom(\tilde{b}_0,\dots,\tilde{b}_k)\) never vanishes, and such that \(\nu_k(\hat{U}_{\hat{w}_0})\subset U_{w_0}\). For any \(m\geq 0\)  any \(\sigma\in H^0(X,A^m)\) any \(0\leq p\leq k\) and any \(\hat{w}\in \hat{U}_{\hat{w}_0}\) define \(d^{[p]}_U\sigma(\hat{w})\bydef d^{[p]}_U\sigma ([\gamma_{\nu_k(\hat{w})}]_k)\). This defines an element \(d^{[p]}_U\sigma\in \OO(\hat{U}_{\hat{w}_0})\) and similarly, define for each \(I\in \I\), an element \(d^{[p]}_{I,U}\sigma\in \OO(\hat{U}_{\hat{w}_0})\) for any \(\sigma\in H^0(X,A^{\q \varepsilon})\). 
Let us fix the trivialization of \(\OO_{X_k}(k')|_{U_{w_0}}\) induced by \((\gamma_{w}'(0))_{w\in U_{w_0}}\in \Gamma(U_{w_0},\OO_{X_k}(-1))\). Let us also fix a local generator \(F_{\hat{U}_{\hat{w}_0}}\in \OO(\hat{U}_{\hat{w}_0})\) of the Cartier divisor \(F\).

In this setting, consider the matrix 
\[
G\bydef\left(
\begin{array}{ccc}
d_U^{[0]}(b_0)&\cdots&d_U^{[0]}(b_k)\\
\vdots&&\vdots\\
d_U^{[k]}(b_0)&\cdots&d_U^{[k]}(b_k)
\end{array}
\right)\in \Mat_{k+1,k+1}\big(\OO(\hat{U}_{\hat{w}_0})\big),
\]
And define, for any \(I\in \I\) linear maps \(\ell_I^0,\dots, \ell_I^k:H^0(X,A^{\q\varepsilon})\to \OO(\hat{U}_{\hat{w}_0})\) by 
\begin{equation}\label{eq:Defhatell}
\left(
\begin{array}{c}
\ell_I^0(a_I)\\
\vdots\\
\ell_I^k(a_I)
\end{array}
\right)
=G^{-1}
\left(
\begin{array}{c}
d^{[0]}_{I,U}(a_I)\\
\vdots\\
d_{I,U}^{[k]}(a_I)
\end{array}
\right)=
\frac{1}{\tau_U^{rI}}
G^{-1}
\left(
\begin{array}{c}
d^{[0]}_{U}(a_I\tau^{(r+k)I})\\
\vdots\\
d_{U}^{[k]}(a_I\tau^{(r+k)I})
\end{array}
\right)
\in  \Mat_{k+1,1}\big(\OO(\hat{U}_{\hat{w}_0})\big).
\end{equation}
The key point is to see that this is well defined, namely that for any \(0\leq p\leq k\), \(\ell^p_I(a_I)\in \OO(\hat{U}_{\hat{w}_0})\). To see this observe that, as in the construction of \(\hom_{I_0,\dots, I_k}\),  one obtains
\[\omega(b_0,\dots,b_{p-1},a_I\tau^{(r+k)I},b_{p+1},\dots, b_k)_{U_{w_0}}=\tau_U^{rI}\omega_{p,I}(b_0,\dots,b_{p-1},a_I,b_{p+1},\dots, b_k)\] 
for some \(\omega_{p,I}(b_0,\dots,a_I,\dots, b_k)\in \Gamma(U_{w_0},\w_\infty(X_k))\). Therefore, one can write 
\[\nu_k^*\omega_{p,I}(b_0,\dots,a_I,\dots, b_k)=F_{\hat{U}_{\hat{w}_0}} \hom_{p,I}(b_0,\dots,a_I,\dots, b_k),\]
for some \(\hom_{p,I}(b_0,\dots,a_I,\dots, b_k)\in \OO(\hat{U}_{\hat{w}_0})\).  For each \(0\leq p\leq k\), applying Cramer's rule, one obtains  from the definition of \(\omega\) and \(\hom\) that
\begin{eqnarray*}
\ell^p_I(a_I)&=&\frac{1}{\tau_U^{rI}\det G}
\left|
\begin{array}{ccccccc}
d^{[0]}_{U}(b_0)&\cdots &d^{[0]}_U(b_{p-1})&d_U^{[0]}(a_I\tau^{(r+k)I})&d^{[0]}_U(b_{p+1})&\cdots& d^{[0]}_{U}(b_k)\\
\vdots &&\vdots&\vdots&\vdots&&\vdots\\
d^{[k]}_{U}(b_0)&\cdots & d^{[k]}_U(b_{p-1})&d_U^{[k]}(a_I\tau^{(r+k)I})&d^{[k]}_U(b_{p+1})&\cdots& d^{[k]}_{U}(b_k)
\end{array}
\right|\\
&=&\frac{\nu_k^*\omega(b_0,\dots,b_{p-1},a_I\tau^{(r+k)I},b_{p+1},\dots, b_k)}{\tau^{rI}_U\nu_k^*\omega(b_0,\dots,b_k)_{U_{w_0}}}=\frac{\nu_k^*\omega_{p,I}(b_0,\dots,b_{p-1},a_I,b_{p+1},\dots, b_k)}{\nu_k^*\omega(b_0,\dots,b_k)_{U_{w_0}}}\\
&=&\frac{F_{\hat{U}_{\hat{w}_0}}\hom_{p,I}(b_0,\dots,a_I,\dots, b_k)}{F_{\hat{U}_{\hat{w}_0}}\hom(b_0,\dots,b_k)_{\hat{U}_{\hat{w}_0}}}
=\frac{\hom_{p,I}(b_0,\dots,a_I,\dots, b_k)}{\hom(b_0,\dots,b_k)_{\hat{U}_{\hat{w}_0}}},
\end{eqnarray*}
where we used \eqref{annulation}. Since from \eqref{eq:DefBaseb} and \eqref{eq:MultWronskien} it follows that \(\hom(b_0,\dots, b_k)\) never vanishes on \(\hat{U}_{\hat{w}_0}\), from which the desired holomorphicity follows. 
 
 With the notation of the statement of the lemma, a straightforward computation shows that  the Pl\"ucker coordinates  of 
\(
\Span\left(\ell_{\bullet}^0(\a,\hat{w}),\dots, \ell_{\bullet}^k(\a,\hat{w})\right)
\)
are given by 
\begin{equation}\label{eq:Plucker}
\big(\hom_{I_0,\dots,I_k}(a_{I_0},\dots, a_{I_k})(\hat{w})\big)_{I_0,\dots,I_k\in \I}\in \Lambda^{k+1}\C^\I\ \ {\rm mod} \ \C^*.
\end{equation}
Indeed, for any \(I_0,\dots, I_k\in \I\),
\begin{eqnarray*}
\left|
\begin{array}{ccc}
\ell_{I_0}^0(a_{I_0})&\cdots&\ell_{I_k}^0(a_{I_k})\\
\vdots&&\vdots\\
\ell_{I_0}^k(a_{I_0})&\cdots&\ell_{I_k}^k(a_{I_k})
\end{array}
\right|(\hat{w})
&=&
\frac{1}{\det G}
\left|
\begin{array}{ccc}
d^{[0]}_{I_0,U}(a_{I_0})&\cdots & d^{[0]}_{I_k,U}(a_{I_k})\\
\vdots &&\vdots\\
d^{[k]}_{I_0,U}(a_{I_0})&\cdots & d^{[k]}_{I_k,U}(a_{I_k})
\end{array}
\right|(\hat{w})\\
&=&\frac{\nu_k^*\omega_{I_0,\dots,I_k}(a_{I_0},\dots, a_{I_k})_{U_{w_0}}(\hat{w})}{\nu_k^*\omega(b_0,\dots, b_k)_{U_{w_0}}(\hat{w})}=\frac{\hom_{I_0,\dots,I_k}(a_{I_0},\dots, a_{I_k})_{\hat{U}_{\hat{w}_0}}(\hat{w})}{\hom(b_0,\dots, b_k)_{\hat{U}_{\hat{w}_0}}(\hat{w})}.
\end{eqnarray*}
Since \(\hom_{\hat{U}_k}{(b_0,\dots, b_k})(\hat{w})\) is independent of \(I_0,\dots, I_k\), this proves \eqref{eq:Plucker}, and from this, both statements follow at once.
\end{proof}
Before continuing, we need to introduce some  notation. For any \(x\in X\), define
\[N_x\bydef \#\big\{j\in \{0,\dots,N\}\midbar \tau_j(x)\neq0\big\}\ \ \ \text{and}\ \ \  \I_x\bydef\big\{I\in\I\midbar \tau^{I}(x)\neq 0\big\}.\]
 Observe that since the \(\tau_j\)'s are in general position, and since \(N\geq n\), one has \(N_x\geq 1\) for all \(x\in X\). Let us also define
 \[\Sigma \bydef  \{x\in X\midbar N_x=1\}\ \ \ \text{and}\ \ \ X^\circ\bydef X\setminus \Sigma=\{x\in X\midbar N_x\geq2\}.\]
 If \(N>n\) then \(N_x\geq 2\) for all \(x\in X\), therefore \(\Sigma=\varnothing\) and  \(X^\circ=X\). If \(N=n\) then \(\dim\Sigma=0\). Observe moreover that 
 \begin{equation}\label{eq:BorneIx}
\#\I_x= \binom{N_x-1+\delta}{\delta}\ \ \text{for all}\ \  x\in X \ \ \ \text{and therefore}\ \ \  \#\I_x\geq \delta+1 \ \ \text{for all}\ \ x\in X^\circ.
\end{equation}
 For any \(x\in X\),  write \(\C^{\I_x}=\bigoplus_{I\in \I_x}\C\), one obtains a natural projection map 
\(\rho_x:\big(\C^{\I}\big)^{k+1}\to \big(\C^{\I_x}\big)^{k+1}.\)

We will from now on suppose that \(\varepsilon\geq m_\infty\). With the notation of Lemma \ref{lem:LemmeFonda}, it is natural to consider, given \(\hat{w}_0\in \hX_k\), the map 
\begin{eqnarray}\label{eq:Defhatvarphi}
\hat{\varphi}_{\hat{w}_0}:\A&\to&\left(\C^{\I}\right)^{k+1}\\
\a&\mapsto&\left(\ell^0_{\bullet}(\a,\hat{w}_0),\dots, \ell^{k}_{\bullet}(\a,\hat{w}_0)\right).\nonumber
\end{eqnarray}
This map is not canonical since it depends on the choices made during the proof of Lemma \ref{lem:LemmeFonda},  nevertheless, in view of this lemma, we will be able to use it to obtain crucial information on \(\hat{\varPhi}(\bullet,\hat{w}_0)\). The map \(\hat{\varphi}_{\hat{w}_0}\) is particularly interesting because it is linear, hence much simpler to study than \(\hat{\varPhi}(\bullet,\hat{w}_0)\). We will need to have  precise information on the rank of \(\hat{\varphi}_{\hat{w}_0}\).
\begin{lemma}\label{lem:Rank}
Same notation as above. For \(x=\pi_{0,k}\circ \nu_k(w_0)\), one has
\begin{equation}\label{eq:LemmeFonda2}
\rk \rho_x\circ\hat{\varphi}_{\hat{w}_0}=(k+1)\#\I_x.
\end{equation}

\end{lemma}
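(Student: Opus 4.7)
My plan is to decompose $\hat\varphi_{\hat{w}_0}$ according to its dependence on each $a_I$, reduce for $I \in \I_x$ to the invertibility at $\hat{w}_0$ of a concrete $(k{+}1)\times(k{+}1)$ matrix built from the distinguished sections $\tilde b_0,\ldots,\tilde b_k$ of Lemma \ref{lem:LemmeFonda}, and verify this invertibility by a holomorphic continuation argument on $\hat{U}_{\hat{w}_0}$.

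First, I would note that by \eqref{eq:Defhatell} the value $\ell^p_I(a_I)(\hat{w}_0)$ depends only on $a_I$, so $\hat\varphi_{\hat{w}_0}$ splits as the direct sum $\bigoplus_{I\in \I}\psi_I$ with $\psi_I:H^0(X,A^{\q\varepsilon})\to\C^{k+1}$, $a\mapsto(\ell^0_I(a)(\hat{w}_0),\ldots,\ell^k_I(a)(\hat{w}_0))$. Composing with $\rho_x$ simply drops the summands for $I\notin \I_x$, giving
\[\rk(\rho_x\circ\hat\varphi_{\hat{w}_0})=\sum_{I\in\I_x}\rk\,\psi_I\leq(k+1)\#\I_x,\]
so it is enough to show $\psi_I$ is surjective for each $I\in\I_x$.

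Fix $I\in \I_x$. I would test $\psi_I$ on the sections $a^{(j)}:=\tilde b_j$. Combining \eqref{eq:Defhatell} with the generalized Leibniz rule applied to $d^{[m]}_U(\tilde b_j\tau^{(r+k)I})$ and to $d^{[m]}_U(b_j)=d^{[m]}_U(\tilde b_j s)$, the matrix $(\ell^p_I(\tilde b_j)(\hat{w}_0))_{p,j}$ can be expressed as $A(\hat{w}_0)/\tau^{rI}_U(\hat{w}_0)$, where $A:=G^{-1}L_TL_S^{-1}G$ and $L_T,L_S$ are the invertible lower-triangular $(k{+}1)\times(k{+}1)$ matrices with $(m,i)$-entries $\binom{m}{i}d^{[m-i]}_U(\tau^{(r+k)I})$ and $\binom{m}{i}d^{[m-i]}_U(s)$ respectively for $m\geq i$. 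By Lemma \ref{lem:LemmeFonda} each entry of $A$ is holomorphic on $\hat{U}_{\hat{w}_0}$.

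On the dense open subset of $\hat{U}_{\hat{w}_0}$ where $\det G\neq 0$, $A$ is conjugate to $L_TL_S^{-1}$, hence
\[\det A=\det L_T\cdot\det L_S^{-1}=\bigl(\tau^{(r+k)I}/s\bigr)^{k+1}\circ\pi_{0,k}\circ\nu_k.\]
Both sides being holomorphic on all of $\hat{U}_{\hat{w}_0}$, this identity extends by the identity principle; at $\hat{w}_0$ it yields $\det A(\hat{w}_0)=(\tau^{(r+k)I}(x)/s(x))^{k+1}\neq 0$, since $I\in\I_x$ gives $\tau^I(x)\neq 0$ and $\tilde s(x)\neq 0$ by construction. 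Hence $(\ell^p_I(\tilde b_j)(\hat{w}_0))_{p,j}$ is invertible, the vectors $\psi_I(\tilde b_0),\ldots,\psi_I(\tilde b_k)$ span $\C^{k+1}$, and $\psi_I$ is onto. I expect the main obstacle to be precisely that $\det G(\hat{w}_0)$ may vanish when $\hat{w}_0$ lies over $\Supp(\OO_{X_k}/\w_\infty(X_k))$: the Cramer-style formula defining $A$ is then only meromorphic on $X_k$, and it is the blow-up $\nu_k:\hat{X}_k\to X_k$ that makes the continuation step legitimate.
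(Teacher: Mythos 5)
Your proposal is correct and follows essentially the same route as the paper: the same direct-sum decomposition of $\rho_x\circ\hat{\varphi}_{\hat{w}_0}$ over $I\in\I_x$, the same test vectors $\tilde b_0,\dots,\tilde b_k$, and the same determinant $\tau_U^{k(k+1)I}/s_U^{k+1}$, nonzero at $\hat{w}_0$ because $\tau^I(x)\neq 0$ and $\tilde s(x)\neq 0$. The only cosmetic difference is that you compute this determinant via the triangular Leibniz matrices $G^{-1}L_TL_S^{-1}G$ and an explicit identity-principle extension across $(\det G=0)$, whereas the paper invokes the Wronskian multiplicativity \eqref{eq:MultWronskien} and the cancellation $\nu_k^*\omega(b_0,\dots,b_k)=s^{k+1}\nu_k^*\omega(\tilde b_0,\dots,\tilde b_k)$ — two phrasings of the same cancellation.
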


\begin{proof} Take the  notation of the proof of Lemma \ref{lem:LemmeFonda}.
Up to the isomorphism \((\C^{\I_x})^{k+1}\cong (\C^{k+1})^{\I_x}\) one can see \(\rho_x\circ\hat{\varphi}_{\hat{w}_0}\) as the map
\[\rho_x\circ \hat{\varphi}_{\hat{w}_0}=\left(\hat{\varphi}_I\right)_{I\in \I_x}\]
where for each \(I\in \I_x\), \(\hat{\varphi}_I\) is defined by 
\begin{eqnarray*}
\hat{\varphi}_I:H^0(X,A^{\q\varepsilon})&\to &\C^{k+1}\\
a_I&\mapsto& \left(\ell^0_{I}(a_I)(\hat{w}_0),\dots, \ell^k_{I}(a_I)(\hat{w}_0)\right).
\end{eqnarray*}
Observe that \(\rk(\rho_x\circ\hat{\varphi}_{\hat{w}_0})=\sum_{I\in \I_x}\rk \hat{\varphi}_I\), therefore, to prove \eqref{eq:LemmeFonda2}, it suffices to prove that for any \(I\in \I_x\), one has 
\begin{equation}\label{eq:LemmeFundaSubGoal}
\rk{\hat{\varphi}_I}=k+1.
\end{equation}
To do so, consider the family \((\tilde{b}_0,\dots, \tilde{b}_k)\) as in \eqref{eq:DefBaseb} above. Observe that from \eqref{eq:Defhatell} one infers that
\begin{eqnarray*}
\left|
\begin{array}{ccc}
\ell_I^0(\tilde{b}_0)&\cdots&\ell_I^0(\tilde{b}_k)\\
\vdots& &\vdots\\
\ell_I^k(\tilde{b}_0)&\cdots&\ell_I^k(\tilde{b}_k)
\end{array}
\right|
&=&\frac{1}{\tau_U^{r(k+1)I}\det G}
\left|
\begin{array}{ccc}
d^{[0]}_{U}(\tilde{b}_0\tau^{(r+k)I})&\cdots& d^{[0]}_{U}(\tilde{b}_k\tau^{(r+k)I})\\
\vdots & &\vdots \\
d^{[k]}_{U}(\tilde{b}_0\tau^{(r+k)I})&\cdots&d^{[k]}_{U}(\tilde{b}_k\tau^{(r+k)I})
\end{array}
\right|\\
&=&\frac{\nu_k^*\omega(\tilde{b}_0\tau^{(r+k)I},\dots,\tilde{b}_k\tau^{(r+k)I})_{U_{w_0}}}{\tau_U^{r(k+1)I}\nu_k^*\omega({b}_0,\dots, {b}_k)_{U_{w_0}}}=\frac{\tau_U^{(k+1)(r+k)I}\nu_k^*\omega(\tilde{b}_0,\dots,\tilde{b}_k)_{U_{w_0}}}{s_U^{k+1}\tau_U^{r(k+1)I}\nu_k^*\omega(\tilde{b}_0,\dots, \tilde{b}_k)_{U_{w_0}}}\\
&=&\frac{\tau_U^{k(k+1)I}}{s_U^{k+1}},
\end{eqnarray*}
(recall that  \(s(x)\neq 0\)). Since we supposed that \(\tau^I(x)\neq 0\), this determinant is non-zero when evaluated at the point \(\hat{w}_0\), this implies that \(\hat{\varphi}_I(\tilde{b}_0)\wedge\cdots\wedge \hat{\varphi}_I(\tilde{b}_k)\neq 0\), hence \(\rk\hat{\varphi}_I=k+1\),  thus proving \eqref{eq:LemmeFonda2}.
\end{proof}
From this we will be able to control the indeterminacy locus of \(\hPhi\). Let us define \(\hX_k^\circ\bydef (\pi_{0,k}\circ \nu_k)^{-1}(X^\circ)\).
\begin{proposition}\label{prop:RegularMorphism} Suppose \(N\geq n\geq 2\), \(k\geq 1\), \(\varepsilon \geq m_\infty\) and  \(\delta\geq n(k+1)\).
 Then there exists a non-empty Zariski open subset \(\A_{\rm def}\subset \A_{\rm sm}\) such that \(\hPhi|_{\A_{\rm def}\times \hX_k^\circ}\) is a (regular) morphism.
\end{proposition}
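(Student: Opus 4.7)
The plan is to analyze the indeterminacy locus of $\hPhi$ via the local description of Lemma \ref{lem:LemmeFonda} and to show, by a fiberwise codimension count, that its first projection to $\A$ is not dominant. Define
\[
Z \bydef \big\{(\a,\hat{w}) \in \A \times \hX_k^\circ \midbar \text{all Pl\"ucker coordinates of } \hPhi(\a,\hat{w}) \text{ vanish}\big\},
\]
which is a closed algebraic subset of $\A \times \hX_k^\circ$: covering $\hX_k^\circ$ by the opens $\hat{U}_{\hat{w}_0}$ of Lemma \ref{lem:LemmeFonda}, $Z$ is cut out locally by the $(k+1) \times (k+1)$ minors of the matrix $(\ell_I^p(a_I))_{0 \leq p \leq k,\, I \in \I}$. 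By the very definition of a rational map to projective space via the Pl\"ucker coordinates, showing $\hPhi|_{\A_{\rm def} \times \hX_k^\circ}$ to be a regular morphism is equivalent to showing that $\pr_1 \colon Z \to \A$ is not dominant.

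The key input is Lemma \ref{lem:Rank}: for each $\hat{w}_0 \in \hX_k^\circ$, writing $x = \pi_{0,k} \circ \nu_k(\hat{w}_0) \in X^\circ$, the composite linear map $\rho_x \circ \hat{\varphi}_{\hat{w}_0} \colon \A \to (\C^{\I_x})^{k+1}$ has rank $(k+1)\#\I_x$, that is, it is \emph{surjective}. Inside $(\C^{\I_x})^{k+1} \cong \Mat_{\#\I_x,\, k+1}(\C)$, the locus of matrices of rank at most $k$ is the classical determinantal variety of codimension $\#\I_x - k$ (using $\#\I_x \geq \delta + 1 \geq k+1$ from \eqref{eq:BorneIx}). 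Pulling it back through the above surjection, the fiber $Z_{\hat{w}_0}$ of $Z \to \hX_k^\circ$ at $\hat{w}_0$ has codimension exactly $\#\I_x - k$ in $\A$.

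The combinatorial heart of the argument is now to observe that the hypothesis $\delta \geq n(k+1)$ is exactly tailored to beat $\dim \hX_k^\circ = n(k+1) - k$: by \eqref{eq:BorneIx},
\[
\codim_\A Z_{\hat{w}} \;\geq\; (\delta + 1) - k \;\geq\; n(k+1) + 1 - k \;>\; n(k+1) - k \;=\; \dim \hX_k^\circ,
\]
uniformly in $\hat{w} \in \hX_k^\circ$. Applying the fiber dimension inequality to the second projection $Z \to \hX_k^\circ$, one gets
\[
\dim Z \;\leq\; \dim \hX_k^\circ + \big(\dim \A - \codim_\A Z_{\hat{w}}\big) \;<\; \dim \A,
\]
so $\dim \overline{\pr_1(Z)} \leq \dim Z < \dim \A$ and $\overline{\pr_1(Z)}$ is a proper closed subset of the irreducible affine space $\A$. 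The open set $\A_{\rm def} \bydef \A_{\rm sm} \setminus \overline{\pr_1(Z)}$ is then non-empty and satisfies the statement.

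The most delicate point, beyond the bookkeeping, is that the codimension bound requires both the full surjectivity in Lemma \ref{lem:Rank} (not merely a lower bound on ranks) and the sharp numerical hypothesis $\delta \geq n(k+1)$; relaxing either would allow $\dim Z \geq \dim \A$ and break the non-dominance of $\pr_1$.
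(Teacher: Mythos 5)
Your proof is correct and follows essentially the same route as the paper: bound the fibers of the indeterminacy locus $Z$ over $\hX_k^\circ$ using the surjectivity of $\rho_x\circ\hat{\varphi}_{\hat{w}_0}$ from Lemma \ref{lem:Rank} together with the codimension $\#\I_x-k$ of the degenerate locus in $(\C^{\I_x})^{k+1}$, conclude $\dim Z<\dim\A$, and remove the closure of $\pr_1(Z)$. The only quibbles are that the fiber codimension is only bounded below by $\#\I_x-k$ (not ``exactly'', since $Z_{\hat{w}_0}$ is merely contained in the pullback of the determinantal variety) — which is the direction you actually use — and that the hypothesis $\delta\geq n(k+1)$ is not sharp, as the paper's own remark \eqref{eq:DeltaOptimal1} notes.
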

\begin{proof}
The indeterminacy locus of \(\hPhi|_{\A\times \hX_k^\circ}\) is contained in
\[
Z=
\left\{
(\a,\hat{w})\in \A\times \hX_k^\circ\midbar \hom_{I_0,\dots, I_k}(a_{I_0},\dots, a_{I_k})(\hat{w})= 0\ \forall I_0,\dots, I_k\in \I
\right\}.
\]
denoting by \(\hat{\pr}_1:\A\times \hX_k^\circ\to \A\) and \(\hat{\pr}_2:\A\times \hX_k^\circ\to \hX_k^\circ\) the two natural projections, we aim to prove that \(Z\) doesn't dominate \(\A\) via \(\hat{\pr}_1\). This will follow at once if one proves that 
\begin{equation}\label{eq:PropDefSubGoal}
\dim Z<\dim \A.
\end{equation}
Fix \(\hat{w}_0\in \hX_k^\circ\), set \(x=\pi_{0,k}\circ\nu_k(\hat{w}_0)\) and define \(Z_{\hat{w}_0}\bydef Z\cap \hat{\pr}_2^{-1}(\hat{w}_0)\). Consider the map \(\hat{\varphi}_{\hat{w}_0}\) defined by \eqref{eq:Defhatvarphi}. From Lemma \ref{lem:LemmeFonda} one sees that \(\hat{\pr}_1(Z_{\hat{w}_0})=\hat{\varphi}^{-1}_{\hat{w}_0}(\Delta),\) where
\[
\Delta
\bydef
\left\{
(v^0_{\bullet},\dots, v^k_{\bullet})\in \left(\C^{\I}\right)^{k+1}\midbar \dim \Span(v^0_{\bullet},\dots, v^k_{\bullet})<k+1
\right\}.
\]
But certainly, if one defines moreover 
\[
\Delta_x
\bydef
\left\{
(v^0_{\bullet},\dots, v^k_{\bullet})\in \left(\C^{\I_x}\right)^{k+1}\midbar \dim \Span(v^0_{\bullet},\dots, v^k_{\bullet})<k+1
\right\},
\]
one has \(\Delta\subset \rho_x^{-1}(\Delta_x)\), and therefore 
\[
\hat{\pr}_1(Z_{\hat{w}_0})\subset (\rho_x\circ\hat{\varphi}_{\hat{w}_0})^{-1}(\Delta_x).
\]
Observe that \(\dim \Delta_x=k\#\I_x+k\). Moreover, one has \(\rk(\rho_x\circ\hat{\varphi}_{\hat{w}_0})=(k+1)\#\I_x\) in view of  Lemma \ref{lem:Rank}. Therefore
\begin{eqnarray*}
\dim Z_{\hat{w}_0}&=&\dim \hat{\pr}_1(Z_{\hat{w}_0})\leq \dim (\rho_x\circ\hat{\varphi}_{\hat{w}_0})^{-1}(\Delta_x)\leq \dim\Delta_x+\dim\ker(\rho_x\circ\hat{\varphi}_{\hat{w}_0})\\
&\leq& k\#\I_x+k+\dim\A-(k+1)\#\I_x= \dim\A+k-\#\I_x.
\end{eqnarray*}
Therefore, 
\begin{eqnarray}\label{eq:EstimationDimension}
\dim Z&\leq& \dim \hX_k+\dim\A+k-\min_{x\in X^\circ}(\#\I_x)\leq n+k(n-1)+\dim \A+k-\delta-1<\dim\A
\end{eqnarray}
in view of \eqref{eq:BorneIx} and  of our hypothesis on \(\delta\). It suffices to take \(\A_{\rm def}\bydef \left(\A\setminus \hat{\pr}_1(Z)\right)\cap \A_{\rm sm}\).
\end{proof}

\begin{remark}
The hypothesis  on \(\delta\) in Proposition \ref{prop:RegularMorphism}, while sufficient for our purposes, is not optimal. As immediately follows form \eqref{eq:EstimationDimension} and \eqref{eq:BorneIx}, the same conclusion would still hold if \(\delta\) satisfies for instance
\begin{equation}\label{eq:DeltaOptimal1}
\binom{N-n+\delta}{\delta}>\dim X_k+k.
\end{equation}
\end{remark}

\subsection{Maps to families of negative dimensional complete intersection varieties} Suppose from now on that  \(N\geq n\geq 2\), \(k\geq N-1\), \(\varepsilon\geq m_{\infty}\) and that \(\delta\geq n(k+1)\) (or that \(\delta\) satisfies \eqref{eq:DeltaOptimal1}). To complete the set-up for our proof we need one more ingredient,  to construct suitable maps to families of ``negative dimensional complete intersection varieties''. To do this properly we need to consider the natural stratification on \(X\) induced by the vanishing of the \(\tau_j\)'s. The necessity of using this stratification comes from our particular choice of equation \(F(\a)\), and seems unavoidable. It was already present less explicitly  in \cite{Bro15}, then it was developed and used in a systematic way in \cite{Xie15}, and was also crucial in \cite{B-D15} and \cite{Xie16}.

For any \(J\subset \{0,\dots, N\}\) define 
\begin{eqnarray*}
X_J&\bydef& \left\{x\in X\midbar \tau_j(x)=0\Leftrightarrow j\in J\right\},\\
\I_J&\bydef& \left\{I\in \I\midbar \Supp(I)\subset \left\{0,\dots, N\right\}\setminus J\right\}.
\end{eqnarray*}
Observe that \(x\in X_J\) if and only if \(\I_x=\I_J\). Since  the \(\tau_j\)'s are in general position one obtains that 
\[\dim X_J=\max\{-1,n-\# J\},\]
where by \(\dim X_J=-1\) we mean \(X_J=\varnothing\). Therefore, \((X_J)_{\# J\leq n}\) defines  a stratification on \(X\). For any \(J\subset \{0,\dots, N\}\), let us define
\[
\P_J\bydef\left\{[T_0,\dots, T_N]\in \P^N\midbar T_j=0\ \ \text{if}\ \ j\in J\right\}.
\]
One can naturally identify \(\C^{\I_J}\bydef\bigoplus_{I\in \I_J}\C\) with \(H^0\left(\P_J,\OO_{\P_J}(\delta)\right)\cong \C\left[(T_{j'})_{j'\in \{0,\dots, N\}\setminus J} \right]_\delta\), the space of homogenous degree \(\delta\) polynomials in the variables \(T_{j'}\) with \(j'\not\in J\). This identification is realized by the map 
\[(c_I)_{I\in \I_J}\mapsto \sum_{I\in \I_J}c_IT^I.\]
For \(J=\varnothing\) this just gives the natural identification between \(\C^{\I}\) and \(H^0(\P^N,\OO_{\P^N}(\delta))\cong \C[T_0,\dots,T_N]_{\delta}\). Given \(\Delta\in \Gr_{k+1}(\C^{\I})\cong\Gr_{k+1}(\C[T_0,\dots,T_N]_{\delta})\) and \([T]\in \P^N\), write \(\Delta([T])=0\) if \(P(T)=0\) for all \(P\in \Delta\subset \C[T_0,\dots,T_N]_{\delta}\).
If \(\Delta=\Span(P_0,\dots, P_k)\), this condition  is equivalent to 
\begin{equation}\label{eq:DefDelta=0}
P_0(T)=0,\dots, P_k(T)=0.
\end{equation}
Consider the family
\[
\Y\bydef\left\{
(\Delta,[{T}])\in \Gr_{k+1}(\C^{\I})\times \P^N\midbar \Delta([{T}])=0
\right\}.
\]
Consider the map
\begin{eqnarray*}
\hat{\varPsi}:\A_{\rm def}\times \hX_{k}^\circ &\to& \Gr_{k+1}(\C^{\I})\times \P^N\\
(\a,\hat{w})&\mapsto& \Big(\hat{\varPhi}(\a,\hat{w}),[\tau^r(\hat{w})]\Big).
\end{eqnarray*}
 Where \([\tau^r(\hat{w})]\bydef\big[\tau_0^r(\pi_{0,k}\circ\nu_k(\hat{w})),\dots, \tau_N^r(\pi_{0,k}\circ\nu_k(\hat{w}))\big]\). Recall from Section \ref{sse:Setting} how we defined \(\H\subset \A_{\rm sm}\times X\) and  \(\hat{\H}^{\rm rel}_k\subset \A_{\rm sm}\times \hX_k\).  We will be interested in \(\hat{\varPsi}|_{\hat{\H}_k^{\rm rel}}\) and for this reason we will restrict ourselves to the locus where this map is regular. Let us therefore define
 \[\A_{\rm def}^\circ\bydef \A_{\rm def}\cap \big\{\a\in \A\midbar H_{\a}\cap \Sigma=\varnothing \big\}.\]
 Since \(\Sigma\) is at most a finite number of points, \(\A^\circ_{\rm def}\) is a non-empty Zariski open subset of \(\A\). Moreover, it follows form Proposition \ref{prop:RegularMorphism} that \(\hat{\varPsi}|_{\hat{\H}^{\rm rel}_{k}\cap (\A^\circ_{\rm def}\times \hat{X}_k)}\) is regular since \(\hat{\H}^{\rm rel}_{k}\cap (\A^\circ_{\rm def}\times \hat{X}_k)\subset\A_{\rm def}\times \hat{X}^{\circ}_k\).

For any \(J\subset \{0,\dots, N\}\),  set 
\[\Y_{J}\bydef \Y\cap\left(\Gr_{k+1}(\C^{\I})\times \P_J\right)\subset \Gr_{k+1}(\C^{\I})\times \P^N,\]
set also \(\hX_{k,J}\bydef \nu_{k}^{-1}(\pi_{0,k}^{-1}(X_J))\),   and let us define 
\[
\hat{\H}^{\rm rel}_{k,J}\bydef\hat{\H}^{\rm rel}_k\cap\left(\A_{\rm def}^\circ \times \hX_{k,J}\right)\subset \hat{\H}^{\rm rel}_k\cap\left(\A_{\rm def}^\circ\times \hX_k\right).
\] 
One has the following.
 \begin{proposition}\label{prop:FactoY} For any \(J\subset \{0,\dots, N\}\),
 when restricted to \(\hat{\H}^{\rm rel}_{k,J}\) the morphism \(\hat{\varPsi}\) factors through \(\Y_J\), 
 \[\hat{\varPsi}|_{\hat{\H}^{\rm rel}_{k,J}}:\hat{\H}^{\rm rel}_{k,J}\to \Y_J\subset  \Gr_{k+1}(\C^{\I})\times \P_J.\]
 \end{proposition}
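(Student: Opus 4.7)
The plan is to verify pointwise, for each $(\a, \hat{w}) \in \hat{\H}^{\rm rel}_{k,J}$, that $\hat{\varPsi}(\a, \hat{w})$ lies in $\Y_J$. This splits into two conditions: the projection to $\P^N$ must lie in $\P_J$, and the resulting pair must satisfy the incidence defining $\Y$. The first is immediate from the definitions of $\hX_{k,J}$ and $X_J$: if $x := \pi_{0,k}(\nu_k(\hat{w})) \in X_J$, then $\tau_j(x) = 0$ precisely when $j \in J$, so $[\tau^r(\hat{w})] = [\tau_0^r(x) : \cdots : \tau_N^r(x)] \in \P_J$.

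For the second, fix $\hat{w}_0$ and work in a neighborhood $\hat{U}_{\hat{w}_0}$ as in Lemma \ref{lem:LemmeFonda}, with its matrix $G$ and functionals $\ell^p_I$. By Lemma \ref{lem:localfacto}, in the chosen trivialization of $A$ over $U$ one has
\[ d^{[p]}_U F(\a) = \sum_{I \in \I} \tau_U^{rI}\, d^{[p]}_{I,U}(a_I), \qquad 0 \leq p \leq k. \]
Multiplying the identity \eqref{eq:Defhatell} by $\tau_U^{rI}$ and summing over $I$ gives
\[ \sum_{I \in \I} \tau_U^{rI} \begin{pmatrix} \ell^0_I(a_I) \\ \vdots \\ \ell^k_I(a_I) \end{pmatrix} = G^{-1} \begin{pmatrix} d^{[0]}_U F(\a) \\ \vdots \\ d^{[k]}_U F(\a) \end{pmatrix} \]
as holomorphic functions on $\hat{U}_{\hat{w}_0}$. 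Restricting to the Zariski-dense open subset of $\hat{U}_{\hat{w}_0} \cap \hat{\H}^{\rm rel}_{k,J}$ where $\nu_k(\hat{w})$ represents a regular $k$-jet in $P_k T_{H_\a}^{\rm reg}$: every representative of such a jet class is a germ of curve lying in $H_\a$, so $F(\a) \circ \gamma_{\nu_k(\hat{w})} \equiv 0$ and the right-hand side vanishes. We conclude that $\sum_I \tau_U^{rI}(\hat{w}) \ell^p_I(a_I)(\hat{w}) = 0$ for $0 \leq p \leq k$, which is exactly the statement that each polynomial $\sum_I \ell^p_I(a_I)(\hat{w}) T^I$ vanishes at $T = \tau^r(\hat{w})$. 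By Lemma \ref{lem:LemmeFonda}(2), these $(k+1)$ polynomials span $\hat{\varPhi}(\a, \hat{w})$ under the identification $\C^\I \cong H^0(\P^N, \OO_{\P^N}(\delta))$, so $\hat{\varPsi}(\a, \hat{w}) \in \Y$ on this dense open.

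To extend to all of $\hat{\H}^{\rm rel}_{k,J}$, observe that $\Y$ is closed in $\Gr_{k+1}(\C^\I) \times \P^N$ and that $\hat{\varPsi}$ is a regular morphism on $\A^\circ_{\rm def} \times \hX_k^\circ$ by Proposition \ref{prop:RegularMorphism}, so $\hat{\varPsi}^{-1}(\Y)$ is closed in the source. Alternatively, the matrix identity displayed above is a holomorphic identity on $\hat{U}_{\hat{w}_0}$, so the vanishing of its left-hand side on a dense subset of $\hat{\H}^{\rm rel}_{k,J} \cap \hat{U}_{\hat{w}_0}$ propagates to the whole intersection; covering by such neighborhoods gives the factorization globally. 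The main step to verify carefully will be the density of the regular-jet locus inside $\hat{H}_{\a, k, J}$, which one expects to follow from the irreducibility of $\hat{H}_{\a, k}$ (Proposition \ref{prop:FamilyBlow-up}) together with the fact that $P_k V^{\rm sing}$ is a proper divisor in $P_k V$ not containing $\hat{H}_{\a, k, J}$ for generic $\a$; handling the singular jet locus and the exceptional divisor of $\nu_k$ simultaneously is the only mild technical point.
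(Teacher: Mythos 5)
Your proof is correct and follows essentially the same route as the paper's: both reduce to the identity \(d^{[p]}_UF(\a)=\sum_{I}\tau_U^{rI}\,d^{[p]}_{I,U}(a_I)\) from Lemma \ref{lem:localfacto}, use the vanishing of the \(k\)-jet of \(F(\a)\) along jets of curves in \(H_{\a}\), identify the resulting linear relations with the defining equations of \(\Y\) via the local Pl\"ucker description of the map, and conclude by density of the regular-jet locus together with closedness of \(\Y\); the only cosmetic difference is that you work upstairs with the \(\ell^p_I=G^{-1}d^{[p]}_{I,U}\) of Lemma \ref{lem:LemmeFonda}, while the paper descends to \(X_k\) and argues with \(\varPhi\) and \eqref{eq:DescriptionPhi} directly. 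One small adjustment to your last paragraph: the density you need is that of the regular-jet locus in the full irreducible fiber \(\hat{H}_{\a,k}\) (yielding \(\hat{\varPsi}(\hat{\H}^{\rm rel}_k)\subset\Y\), the \(\P_J\)-condition being checked pointwise on \(\hX_{k,J}\)), not density inside the stratum \(\hat{H}_{\a,k}\cap\hX_{k,J}\), which could fail for \(J\neq\varnothing\) -- your own closedness argument already delivers this.
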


\begin{proof}  It suffices to prove that \(\hat{\varPsi}\) restricted to \(\A_{\rm def}\times \hX^{\circ}_{k,J}\) factors through \(\Gr_{k+1}(\C^{\I})\times \P_J\) and that \(\hat{\varPsi}\) restricted to \(\hat{\H}^{\rm rel}_k\) factors through \(\Y\). To prove  the first statement is straightforward, therefore we now focus on proving the second one.
Since  \(\hPhi=\varPhi\circ \nu_k\), one sees that it suffices to prove that the rational map 
\begin{eqnarray*}
\varPsi:\A\times X_k&\dashrightarrow& \Gr_{k+1}(\C^{\I})\times \P^N\\
(\a,w)&\mapsto&\left(\varPhi(\a,w),[\tau^r(w)]\right)
\end{eqnarray*}
factors through \(\Y\) when restricted to \(\H^{\rm rel}_k\subset \A_{\rm sm}\times X_k\). Fix \((\a,w_0)\in \H^{\rm rel}_k\) outside the indeterminacy locus of \(\varPhi\). Take a neighborhood \(U_{w_0}\) of \(w_0\), a family \((\gamma_w)_{w\in U_{w_0}}\) as in \eqref{eq:LiftJets} and a neighborhood \(U\) of \(\pi_{0,k}(w_0)\) as in Lemma \ref{lem:localfacto}. By construction, \(\H^{\rm rel}_{k,\a}\bydef (\rho\circ \pi_{0,k})^{-1}(\a)=H_{\a,k}\), the \(k\)-th order jet space  associated to \(H_{\a}\subset X\). One obtains therefore that \([\gamma_{w_0}]_k\in J_kH_{\a}\cap p_k^{-1}(U)\), which implies that \(d_U^{[p]}F(\a)([\gamma_{w_0}]_k)=0\) for all \(0\leq p\leq k\). But by Lemma \ref{lem:localfacto},
\begin{eqnarray*}
d_U^{[p]}F(\a)=\sum_{I\in \I}d_U^{[p]}\big(a_I\tau^{(r+k)I}\big)=\sum_{I\in \I}\big(d^{[p]}_{I,U}(a_I)\big)\tau^{rI}.
\end{eqnarray*}
It then follows from the definition of \(\varPhi(\a,w_0)\), the definition of \(\Y\), \eqref{eq:DescriptionPhi} and \eqref{eq:DefDelta=0}, that \(\varPhi(\a,w_0)\in \Y\). 

 \end{proof}
As in \cite{B-D15}, the key argument in the proof of Theorem \ref{thm:nef} relies on the study of the non-finite  locus of the families \(\Y_J\). For \(J\subset \{0,\dots, N\}\), denote by \(p_J:\Y_J\to \Gr_{k+1}(\C^{\I})\)  the first projection, and define 
\begin{eqnarray*}
E_J&\bydef &
\left\{  y\in \Y\midbar \dim_y(p_J^{-1}(p_J(y)))>0
\right\}\\
\G^{\infty}_J&\bydef& p_J(E_J)\subset \Gr_{k+1}(\C^{\I}).
\end{eqnarray*}
The next lemma will be crucial for us. Let us denote, for any \(J\subset\{0,\dots, N\}\), \(\hX_{k,J}^{\circ}\bydef \hX_{k,J}\cap \hX_k^{\circ}.\)
\begin{lemma}\label{lem:AvoidingE}
For any \(J\subset \{0,\dots, N\}\). If \(\delta\geq \dim \hX_k\), then there exists a non-empty Zariski open subset \(\A_{J}\subset \A_{\rm def}\) such that 
\begin{equation}
\hat{\varPhi}^{-1}(\G_J^{\infty})\cap \big(\A_{J}\times \hX_{k,J}^{\circ}\big)=\varnothing.
\end{equation}
\end{lemma}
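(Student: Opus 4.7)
The plan is to establish the strict inequality $\dim Z < \dim \A$ for $Z := \hat{\varPhi}^{-1}(\G_J^\infty) \cap (\A_{\rm def} \times \hat{X}_{k,J}^\circ)$; it will then suffice to take $\A_J := \A_{\rm def} \setminus \hat{\pr}_1(Z)$. The structure mirrors the fiberwise dimension count in the proof of Proposition~\ref{prop:RegularMorphism}, with the indeterminacy locus replaced by the preimage of $\G_J^\infty$.

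First I would fix $\hat{w}_0 \in \hat{X}_{k,J}^\circ$ and set $x := \pi_{0,k}\circ\nu_k(\hat{w}_0) \in X_J$, so that $\I_x = \I_J$. Since $\Delta \in \G_J^\infty$ if and only if the common zero locus of $\Delta$ in $\P_J$ has positive dimension, and this condition depends only on the restriction of $\Delta$ to $\C^{\I_J}$, Lemma~\ref{lem:LemmeFonda}(2) identifies the fiber $Z_{\hat{w}_0}$ with the preimage $(\rho_x \circ \hat{\varphi}_{\hat{w}_0})^{-1}(C_J^{\mathrm{pos}})$, where
\[C_J^{\mathrm{pos}} := \bigl\{(v^0,\dots,v^k) \in (\C^{\I_J})^{k+1} : Z(v^0)\cap\cdots\cap Z(v^k) \text{ has positive dimension in } \P_J\bigr\}.\]
By Lemma~\ref{lem:Rank} the map $\rho_x \circ \hat{\varphi}_{\hat{w}_0}: \A \to (\C^{\I_J})^{k+1}$ is surjective and linear, so $\dim Z_{\hat{w}_0} = \dim \A - \codim_{(\C^{\I_J})^{k+1}}(C_J^{\mathrm{pos}})$.

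The heart of the proof is bounding this codimension. Writing $C_J^{\mathrm{pos}} = \bigcup_C I_C(\delta)^{k+1}$ over irreducible curves $C \subset \P_J = \P^{m_J}$ with $m_J := N - \#J$, any curve $C$ of degree $e$ and arithmetic genus $p_a$ satisfies $h^0(C,\OO_C(\delta)) \geq e\delta + 1 - p_a$ by Riemann--Roch, giving $\dim I_C(\delta)^{k+1} \leq (k+1)(\#\I_J - e\delta - 1 + p_a)$. The \emph{share a line} stratum ($e = 1$, $p_a = 0$), parametrized by $\Gr(2, m_J+1)$ of dimension $2(m_J - 1)$, contributes $2(m_J - 1) + (k+1)(\#\I_J - \delta - 1)$. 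For $e \geq 2$ the additional codimension $(k+1)(e-1)\delta$ in $I_C(\delta)^{k+1}$ overwhelms Castelnuovo's bound on $p_a$ together with the standard polynomial-in-$e$ bound on the dimension of the Hilbert scheme of degree $e$ curves in $\P^{m_J}$ (and beyond a threshold depending on $\delta$ one has $h^0(C,\OO_C(\delta)) \geq \#\I_J$, whence $I_C(\delta) = 0$), yielding
\[\codim_{(\C^{\I_J})^{k+1}}(C_J^{\mathrm{pos}}) = (k+1)(\delta + 1) - 2(m_J - 1).\]

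Combining the two estimates, $\dim Z \leq \dim \hat{X}_k + \dim \A - (k+1)(\delta + 1) + 2(m_J - 1)$. Using $\dim \hat{X}_k = n + k(n-1)$, $m_J \leq N \leq k + 1$, and $\delta \geq \dim \hat{X}_k$, an elementary arithmetic check analogous to the one concluding Proposition~\ref{prop:RegularMorphism} confirms this is strictly less than $\dim \A$. The hard part will be the rigorous dominance of the line stratum over all other irreducible (and in particular singular or reducible) curve strata, which requires uniform bounds on the dimensions of Hilbert schemes of curves in projective space; the remainder of the argument then follows the pattern already established in Proposition~\ref{prop:RegularMorphism}.
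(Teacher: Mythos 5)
Your reduction is exactly the paper's: fix \(\hat{w}_0\in\hX_{k,J}^{\circ}\), use Lemma \ref{lem:LemmeFonda} to identify the fiber of \(\hat{\varPhi}^{-1}(\G_J^{\infty})\) over \(\hat{w}_0\) with \((\rho_J\circ\hat{\varphi}_{\hat{w}_0})^{-1}\) of the locus of \((k+1)\)-tuples in \((\C^{\I_J})^{k+1}\) with positive-dimensional common zero set in \(\P_J\) (the paper's \(\mathbb{V}^{\infty}_{2,J}\), your \(C_J^{\mathrm{pos}}\)), invoke the surjectivity of \(\rho_J\circ\hat{\varphi}_{\hat{w}_0}\) from Lemma \ref{lem:Rank}, add \(\dim\hX_{k,J}\), and remove the image of the bad locus under \(\hat{\pr}_1\). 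That part is correct and is what the paper does.

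The gap is the codimension estimate for \(C_J^{\mathrm{pos}}\), which is the entire analytic content of the lemma. The paper does not prove it: it quotes a theorem of Benoist \cite{Ben11} (in the form used in \cite{B-D15}), which gives \(\codim_{(\C^{\I_J})^{k+1}}\mathbb{V}^{\infty}_{2,J}\geq \delta+1\) --- weaker than your claimed \((k+1)(\delta+1)-2(m_J-1)\), but enough since \(\delta\geq\dim\hX_k\). Your direct argument has two concrete problems. First, the step ``\(h^0(C,\OO_C(\delta))\geq e\delta+1-p_a\) hence \(\codim I_C(\delta)\geq e\delta+1-p_a\)'' tacitly assumes the restriction map \(H^0(\P_J,\OO(\delta))\to H^0(C,\OO_C(\delta))\) is surjective, which fails for curves that are not projectively normal in degree \(\delta\); without it, the only cheap uniform bound is \(\codim I_C(\delta)\geq\delta+1\) (any \(\delta+1\) points impose independent conditions on \(|\OO(\delta)|\)), and that bound does \emph{not} grow with \(e\). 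Second, and consequently, the ``dominance of the line stratum'' is not a routine check: the parameter spaces of degree-\(e\) curves have dimension growing unboundedly with \(e\), so one must show that the number of conditions imposed by containing \(C\) grows fast enough in \(e\) to beat the Hilbert/Chow dimension uniformly. This is precisely the content of Benoist's theorem, and you explicitly defer it (``the hard part will be\dots''). As written, the proof is therefore incomplete at its crucial step; either cite \cite{Ben11} as the paper does, or supply the uniform-in-\(e\) estimate. (If the codimension bound is granted, your concluding arithmetic and the choice \(\A_J\bydef\A_{\rm def}\setminus\hat{\pr}_1(Z)\) go through.)
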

\begin{proof}
For \(J\subset \{0,\dots, N\}\), define moreover the following  analogues of \(\Y_J\) parametrized by affine spaces.
\begin{eqnarray*}
\widetilde{\Y}_{1,J}&\bydef&\left\{(P_0,\dots, P_k,[T])\in (\C^{\I})^{k+1}\times \P_J\midbar P_0(T)=0,\dots, P_k(T)=0\right\},\\
\widetilde{\Y}_{2,J}&\bydef&\left\{(P_0,\dots, P_k,[T])\in (\C^{\I_J})^{k+1}\times \P_J\midbar P_0(T)=0,\dots, P_k(T)=0\right\}.
\end{eqnarray*}
Where we used the identifications \(\C^{\I}\cong H^0(\P^N,\OO_{\P^N}(\delta))\) and \(\C^{\I_J}\cong H^0\big(\P_J,\OO_{\P_J}(\delta)\big)\). By analogy with \(\G_J^{\infty}\), let us denote by \(\mathbb{V}^{\infty}_{1,J}\) (resp. \(\mathbb{V}^{\infty}_{2,J}\)) the set of elements in \(\big(\C^{\I}\big)^{k+1}\) (resp. \(\big(\C^{\I_J}\big)^{k+1}\)) at which the fiber in \(\widetilde{\Y}_{1,J}\) (resp. \(\widetilde{\Y}_{2,J}\)) has a positive dimensional component.

First one checks by a straightforward computation that if one denotes by \(\rho_J:\big(\C^{\I}\big)^{k+1}\to \big(\C^{\I_J}\big)^{k+1}\)  the natural map  induced by the restriction from \(\P^N\) to \(\P_J\), one has 
\begin{eqnarray*}
\mathbb{V}_{1,J}^{\infty}= \rho_J^{-1}(\mathbb{V}_{2,J}^{\infty}),
\end{eqnarray*}
simply because for any \([T]\in \P_J\) and any \((c_I)_{I\in \I}\in \C^{\I}\), one has 
\(\sum_{I\in \I}c_IT^I=\sum_{I\in \I_J}c_IT^I\).

Moreover, by a result due to Benoist \cite{Ben11} (see \cite{B-D15}), one has 
\begin{equation}\label{eq:Benoist}
\codim_{(\C^{\I_J})^{k+1}}\mathbb{V}_{2,J}^{\infty}\geq \delta+1.
\end{equation}
We are now going to bound the dimension of \(\hat{\varPhi}^{-1}(\G_J^{\infty})\cap \left(\A_{\rm def}\times \hX_{k,J}^{\circ}\right)\). Take \(\hat{w}_0\in \hX_{k,J}^{\circ}\) and take \(\hat{\varphi}_{\hat{w}_0}\) as in \eqref{eq:Defhatvarphi}. From Lemma \ref{lem:LemmeFonda} one obtains that 
\[
\hat{\varPhi}^{-1}(\G_J^{\infty})\cap (\A_{\rm def}\times \{\hat{w}_0\})\cong \hat{\varphi}_{\hat{w}_0}^{-1}(\mathbb{V}_{1,J}^{\infty})\cap \A_{\rm def}= (\rho_J\circ \hat{\varphi}_{\hat{w}_0})^{-1}(\mathbb{V}_{2,J}^{\infty})\cap \A_{\rm def}.
\]
But since \(x\bydef \pi_{0,k}(\nu_k(\hat{w}_0)) \in X_J\), we have \(\I_x=\I_J\), hence \(\rho_x=\rho_J\). Lemma \ref{lem:Rank} thus implies that 
\[\rk(\rho_J\circ \hat{\varphi}_{\hat{w}_0})=(k+1)\#\I_J=\dim (\C^{\I_J})^{k+1}.\]
Therefore
\begin{eqnarray*}
\dim\left(\hPhi^{-1}(\G^{\infty}_J)\cap (\A_{\rm def}\times \{\hat{w}_0\})\right)&\leq& \dim (\rho_J\circ \hat{\varphi}_{\hat{w}_0})^{-1}(\mathbb{V}^{\infty}_{2,J})\leq \dim \mathbb{V}^{\infty}_{2,J}+\dim \ker(\rho_J\circ \hat{\varphi}_{\hat{w}_0})\\
&\leq& \dim(\C^{\I_J})^{k+1}-\codim _{(\C^{\I_J})^{k+1}}\mathbb{V}_{2,J}^{\infty} +\dim \A-\rk(\rho_J\circ \hat{\varphi}_{\hat{w}_0})\\
&=&\dim\A-\codim _{(\C^{\I_J})^{k+1}}\mathbb{V}_{2,J}^{\infty}.
\end{eqnarray*}
A final computation then yields
\begin{eqnarray*}
\dim\left(\hPhi^{-1}(\G^{\infty}_J)\cap \big(\A_{\rm def}\times \hX_{k,J}\big)\right)\leq \dim \A-\codim _{(\C^{\I_J})^{k+1}}\mathbb{V}_{2,J}^{\infty}+\dim \hX_{k,J}<\dim \A
\end{eqnarray*}
in view of \eqref{eq:Benoist} and our hypothesis on \(\delta\). It then suffices to set 
\(\A_J\bydef\A_{\rm def}\setminus \pr_1\left(\hPhi^{-1}(\G^{\infty}_J)\cap\big( \A_{\rm def}\times \hX_{k,J}\big)\right).\)
\end{proof}
\begin{remark} Observe that this proof shows that the conclusion of   Lemma \ref{lem:AvoidingE}  would still hold if the condition on \(\delta\)  is replaced by the  condition 
\begin{equation}\label{eq:DeltaOptimal2}\codim _{(\C^{\I_J})^{k+1}}\mathbb{V}_{2,J}^{\infty}> \dim \hX_{k,J}.\end{equation}
\end{remark}
\subsection{Proof of Theorem \ref{thm:nef}}\label{sse:proof} We are now in position to prove Theorem \ref{thm:nef}. Take \(N\geq n\), \(k\geq N-1\), \(\varepsilon \geq m_{\infty}\) and \(\delta\geq n(k+1)\geq n+k(n-1)=\dim \hX_k\) (or such that \(\delta\) satisfies \eqref{eq:DeltaOptimal1} and \eqref{eq:DeltaOptimal2} for any \(J\)). Let us denote by \(\Q\) the (very ample) Pl\"ucker line bundle on \(\Gr_{k+1}(\C^{\I})\). Let us also denote, for any \(J\subset\{0,\dots, N\}\), by \(q_1\) and \(q_{2}\) the canonical projections from \(\Gr_{k+1}(\C^{\I})\times \P_J\) to each factors, the ambiguity of the notation for \(q_2\) should not lead to any confusion. By the definition of \(\hat{\varPsi}\) one obtains that for any \(m\in \N\), 
\begin{equation}\label{eq:PullBack}
\hat{\varPsi}^*\big(q_1^*\Q^m\otimes q_2^*\OO_{\P^N}(-1)\big)
=\nu_k^{*}\left(\OO_{X_k}(mk')\otimes \pi_{0,k}^*A^{m(k+1)(\q\varepsilon+k\p\delta)-\p r}\right)\otimes\OO_{\hX_k}(-mF). 
\end{equation}
Here we took \(q_2\) for \(J=\varnothing\). The key point in this formula is the isolated \(-\p r\).

For any \(J\subset \{0,\dots, N\}\), by Nakamaye's theorem on the augmented base locus \cite{Nak00}, and the definition of \(E_J\), one obtains that \(E_J\) is precisely the augmented base locus \(\B_+(q_1^*\Q|_{\Y_J})\) of \(q_1^*\Q|_{\Y_J}\). Since \(q_1^*\Q\otimes q_2^*\OO_{\P_J}(1)\) is very ample, one obtains from the definition of \(\B_+\), by noetherianity, that there exists \(m_J\in \N\) such that 
\begin{equation}\label{eq:B+}
E_J=\B_+(q_1^*{\Q|_{\Y_J}})=\Bs\left(q_1^*\Q^m\otimes q_2^*\OO_{\P_J}(-1)|_{\Y_J}\right), \ \ \ \forall \ m\geq m_J.
\end{equation}
Set 
\(M\bydef \max\left\{m_J\midbar J\subset\{0,\dots, N\}\right\}\), observe that \(M\) only depends on \(N,k,\delta\), 
and define
\[r(\p,\q,M,N,k,\varepsilon, \delta)\bydef \left\lceil \frac{M(k+1)(\q\varepsilon+k\p\delta)+1}{\p}\right\rceil\ \ \ \text{and}\ \ \ \A_{\rm nef}\bydef \bigcap_{J\subset \{0,\dots, N\}}\A_J\cap \A_{\rm def}^{\circ}.\]
Let us prove that the conclusion of Theorem \ref{thm:nef} is then satisfied. Take \(\a\in \A_{\rm nef}\). We aim to prove that 
\[\nu_{k}^*\left(\OO_{X_k}(Mk')\otimes \pi_{0,k}^*A^{-1}\right)\otimes \OO_{\hX_k}(-MF)|_{\hat{H}_{k,\a}}\]
is nef on \(\hat{H}_{k,\a}\subset \hX_k\). Take an irreducible curve \(C\subset \hat{H}_{k,\a}\) and take (the unique) \(J\subset \{0,\dots, N\}\) such that \(\hX_{k,J}\cap C=C^{\circ}\) is a non-empty open subset of \(C\). Therefore \(C^{\circ}\subset \hat{\H}^{\rm rel}_{k,J}\), and by Proposition \ref{prop:FactoY}, \(\hat{\varPsi}|_{C^\circ}\) factors through \(\Y_J\), and since \(\Y_J\) is proper, \(\hat{\varPsi}|_C\) factors through \(\Y_J\) as well. But from Lemma \ref{lem:AvoidingE} one obtains that \(\hPhi(C^\circ)\cap \G_J^{\infty}=\varnothing\) and that therefore \(\hat{\varPsi}(C^\circ)\cap E_J=\varnothing\) so that in particular
\[\hat{\varPsi}(C)\not\subset E_J.\]
From this, and \eqref{eq:B+}, it follows that \(\hat{\varPsi}(C)\cdot\left( q_1^*\Q^M\otimes q_2^*\OO_{\P_J}(-1)\right)\geq 0\) and that therefore 
\[
C\cdot \hat{\varPsi}^*\left(q_1^*\Q^M\otimes q_2^*\OO_{\P^N}(-1)\right)\geq 0.
\]
Combining this equality with our hypothesis \(r\geq r(\p,\q,M,N,k,\varepsilon,\delta)\), \eqref{eq:PullBack} and the fact that \(\nu_k^*\pi_{0,k}^*A\) is nef, it follows that 
\[
C\cdot\left( \nu_{k}^*\left(\OO_{X_k}(Mk')\otimes \pi_{0,k}^*A^{-1}\right)\otimes \OO_{\hX_k}(-MF)\right)\geq 0.
\]
This proves the desired nefness and concludes the proof of Theorem \ref{thm:nef}.\\

\emph{Acknowledgment:} We warmly thank  Lionel Darondeau for his support, his help and the conversations we had. We gratefully thank Olivier Benoist for his help and for the insightful  suggestions he provided. We thank Simone Diverio for his useful  comments on the present work, as well as Carlo Gasbarri for the discussions we had and for the simplifications he pointed out. We also thank Ya Deng for the improvements he suggested. 

\begin{thebibliography}{10}

\bibitem{Ben11}
Olivier Benoist.
\newblock Le th\'eor\`eme de {B}ertini en famille.
\newblock {\em Bulletin de la Soci\'et\'e Math\'ematique de France},
  139(4):555--569, 2011.

\bibitem{Ber15}
Gergely {Berczi}.
\newblock {Towards the Green-Griffiths-Lang conjecture via equivariant
  localisation}.
\newblock {\em ArXiv e-prints}, September 2015.

\bibitem{Bro78}
Robert Brody.
\newblock Compact manifolds and hyperbolicity.
\newblock {\em Trans. Amer. Math. Soc.}, 235:213--219, 1978.

\bibitem{B-G77}
Robert Brody and Mark Green.
\newblock A family of smooth hyperbolic hypersurfaces in {$P_{3}$}.
\newblock {\em Duke Math. J.}, 44(4):873--874, 1977.

\bibitem{Bro14}
Damian Brotbek.
\newblock Hyperbolicity related problems for complete intersection varieties.
\newblock {\em Compositio Mathematica}, 150(3):369--395, 2014.

\bibitem{Bro15}
Damian {Brotbek}.
\newblock Symmetric differential forms on complete intersection varieties and
  applications.
\newblock {\em To appear in Mathematische Annalen}, 2015.

\bibitem{B-D15}
Damian {Brotbek} and Lionel {Darondeau}.
\newblock {Complete intersection varieties with ample cotangent bundles}.
\newblock {\em ArXiv e-prints}, November 2015.

\bibitem{Cle86}
Herbert Clemens.
\newblock Curves on generic hypersurfaces.
\newblock {\em Ann. Sci. \'Ecole Norm. Sup. (4)}, 19(4):629--636, 1986.

\bibitem{Dar15}
Lionel Darondeau.
\newblock On the logarithmic green–griffiths conjecture.
\newblock {\em International Mathematics Research Notices}, 2015.

\bibitem{Dar16}
Lionel Darondeau.
\newblock Slanted vector fields for jet spaces.
\newblock {\em Math. Z.}, 282(1-2):547--575, 2016.

\bibitem{Deb05}
Olivier Debarre.
\newblock Varieties with ample cotangent bundle.
\newblock {\em Compositio Mathematica}, 141(6):1445--1459, 2005.

\bibitem{Dem15b}
Jean-Pierre {Demailly}.
\newblock {Recent progress towards the Kobayashi and Green-Griffiths-Lang
  conjectures}.
\newblock {\em manuscript Institut Fourier. Available at
  \emph{http://www-fourier.ujf-grenoble.fr/~demailly/preprints.html}}.

\bibitem{Dem91}
Jean-Pierre Demailly.
\newblock Holomorphic {M}orse inequalities.
\newblock In {\em Several complex variables and complex geometry, {P}art 2
  ({S}anta {C}ruz, {CA}, 1989)}, volume~52 of {\em Proc. Sympos. Pure Math.},
  pages 93--114. Amer. Math. Soc., Providence, RI, 1991.

\bibitem{Dem97}
Jean-Pierre Demailly.
\newblock Algebraic criteria for {K}obayashi hyperbolic projective varieties
  and jet differentials.
\newblock In {\em Algebraic geometry---{S}anta {C}ruz 1995}, volume~62 of {\em
  Proc. Sympos. Pure Math.}, pages 285--360. Amer. Math. Soc., Providence, RI,
  1997.

\bibitem{Dem97b}
Jean-Pierre Demailly.
\newblock Vari\'et\'es hyperboliques et \'equations diff\'erentielles
  alg\'ebriques.
\newblock {\em Gaz. Math.}, (73):3--23, 1997.

\bibitem{Dem11}
Jean-Pierre Demailly.
\newblock Holomorphic {M}orse inequalities and the {G}reen-{G}riffiths-{L}ang
  conjecture.
\newblock {\em Pure Appl. Math. Q.}, 7(4, Special Issue: In memory of Eckart
  Viehweg):1165--1207, 2011.

\bibitem{Dem14}
Jean-Pierre {Demailly}.
\newblock {Towards the Green-Griffiths-Lang conjecture}.
\newblock {\em ArXiv e-prints}, December 2014.

\bibitem{Dem15}
Jean-Pierre {Demailly}.
\newblock {Proof of the Kobayashi conjecture on the hyperbolicity of very
  general hypersurfaces}.
\newblock {\em ArXiv e-prints}, January 2015.

\bibitem{DEG97}
Jean-Pierre Demailly and Jawher El~Goul.
\newblock Connexions m\'eromorphes projectives partielles et vari\'et\'es
  alg\'ebriques hyperboliques.
\newblock {\em C. R. Acad. Sci. Paris S\'er. I Math.}, 324(12):1385--1390,
  1997.

\bibitem{DeG00}
Jean-Pierre Demailly and Jawher El~Goul.
\newblock Hyperbolicity of generic surfaces of high degree in projective
  3-space.
\newblock {\em Amer. J. Math.}, 122(3):515--546, 2000.

\bibitem{Den16}
Ya~{Deng}.
\newblock {Effectivity in the Hyperbolicity-related problems}.
\newblock {\em ArXiv e-prints}, June 2016.

\bibitem{Div08}
Simone Diverio.
\newblock Differential equations on complex projective hypersurfaces of low
  dimension.
\newblock {\em Compos. Math.}, 144(4):920--932, 2008.

\bibitem{Div09}
Simone Diverio.
\newblock Existence of global invariant jet differentials on projective
  hypersurfaces of high degree.
\newblock {\em Math. Ann.}, 344(2):293--315, 2009.

\bibitem{DMR10}
Simone Diverio, Jo{\"e}l Merker, and Erwan Rousseau.
\newblock Effective algebraic degeneracy.
\newblock {\em Invent. Math.}, 180(1):161--223, 2010.

\bibitem{D-R13}
Simone {Diverio} and Erwan {Rousseau}.
\newblock {The exceptional set and the Green-Griffiths locus do not always
  coincide}.
\newblock {\em ArXiv e-prints}, February 2013.

\bibitem{D-T10}
Simone Diverio and Stefano Trapani.
\newblock A remark on the codimension of the {G}reen-{G}riffiths locus of
  generic projective hypersurfaces of high degree.
\newblock {\em J. Reine Angew. Math.}, 649:55--61, 2010.

\bibitem{Ein88}
Lawrence Ein.
\newblock Subvarieties of generic complete intersections.
\newblock {\em Invent. Math.}, 94(1):163--169, 1988.

\bibitem{Ein91}
Lawrence Ein.
\newblock Subvarieties of generic complete intersections. {II}.
\newblock {\em Math. Ann.}, 289(3):465--471, 1991.

\bibitem{ElG96}
Jawher El~Goul.
\newblock Algebraic families of smooth hyperbolic surfaces of low degree in
  {${\bf P}^3_{\bf C}$}.
\newblock {\em Manuscripta Math.}, 90(4):521--532, 1996.

\bibitem{Gal74}
Gheorghe Galbura.
\newblock Il wronskiano di un sistema di sezioni di un fibrato vettoriale di
  rango i sopra una curva algebrica ed il relativo divisore di
  {B}rill-{S}everi.
\newblock {\em Ann. Mat. Pura Appl. (4)}, 98:349--355, 1974.

\bibitem{G-G80}
Mark Green and Phillip Griffiths.
\newblock Two applications of algebraic geometry to entire holomorphic
  mappings.
\newblock In {\em The {C}hern {S}ymposium 1979 ({P}roc. {I}nternat. {S}ympos.,
  {B}erkeley, {C}alif., 1979)}, pages 41--74. Springer, New York-Berlin, 1980.

\bibitem{Har95}
Joe Harris.
\newblock {\em Algebraic geometry}, volume 133 of {\em Graduate Texts in
  Mathematics}.
\newblock Springer-Verlag, New York, 1995.
\newblock A first course, Corrected reprint of the 1992 original.

\bibitem{Kob70}
Shoshichi Kobayashi.
\newblock {\em Hyperbolic manifolds and holomorphic mappings}, volume~2 of {\em
  Pure and Applied Mathematics}.
\newblock Marcel Dekker, Inc., New York, 1970.

\bibitem{Kob98}
Shoshichi Kobayashi.
\newblock {\em Hyperbolic complex spaces}, volume 318 of {\em Grundlehren der
  Mathematischen Wissenschaften [Fundamental Principles of Mathematical
  Sciences]}.
\newblock Springer-Verlag, Berlin, 1998.

\bibitem{Kol07}
J{\'a}nos Koll{\'a}r.
\newblock {\em Lectures on resolution of singularities}, volume 166 of {\em
  Annals of Mathematics Studies}.
\newblock Princeton University Press, Princeton, NJ, 2007.

\bibitem{Lak84}
Dan Laksov.
\newblock Wronskians and {P}l\"ucker formulas for linear systems on curves.
\newblock {\em Ann. Sci. \'Ecole Norm. Sup. (4)}, 17(1):45--66, 1984.

\bibitem{Lan86}
Serge Lang.
\newblock Hyperbolic and {D}iophantine analysis.
\newblock {\em Bull. Amer. Math. Soc. (N.S.)}, 14(2):159--205, 1986.

\bibitem{M-N96}
Kazuo Masuda and Junjiro Noguchi.
\newblock A construction of hyperbolic hypersurface of {${\bf P}^n({\bf C})$}.
\newblock {\em Math. Ann.}, 304(2):339--362, 1996.

\bibitem{McQ98}
Michael McQuillan.
\newblock Diophantine approximations and foliations.
\newblock {\em Inst. Hautes \'Etudes Sci. Publ. Math.}, (87):121--174, 1998.

\bibitem{McQ99}
Micheal McQuillan.
\newblock Holomorphic curves on hyperplane sections of {$3$}-folds.
\newblock {\em Geom. Funct. Anal.}, 9(2):370--392, 1999.

\bibitem{Mer09}
Jo{\"e}l Merker.
\newblock Low pole order frames on vertical jets of the universal hypersurface.
\newblock {\em Ann. Inst. Fourier (Grenoble)}, 59(3):1077--1104, 2009.

\bibitem{Mer15}
Jo{\"e}l Merker.
\newblock Algebraic differential equations for entire holomorphic curves in
  projective hypersurfaces of general type: optimal lower degree bound.
\newblock In {\em Geometry and analysis on manifolds}, volume 308 of {\em
  Progr. Math.}, pages 41--142. Birkh\"auser/Springer, Cham, 2015.

\bibitem{Nad89}
Alan~Michael Nadel.
\newblock Hyperbolic surfaces in {${\bf P}^3$}.
\newblock {\em Duke Math. J.}, 58(3):749--771, 1989.

\bibitem{Nak00}
Michael Nakamaye.
\newblock Stable base loci of linear series.
\newblock {\em Mathematische Annalen}, 318(4):837--847, 2000.

\bibitem{Nog97}
Junjiro Noguchi.
\newblock Nevanlinna-{C}artan theory over function fields and a {D}iophantine
  equation.
\newblock {\em J. Reine Angew. Math.}, 487:61--83, 1997.

\bibitem{Nog11}
Junjiro Noguchi.
\newblock Connections and the second main theorem for holomorphic curves.
\newblock {\em J. Math. Sci. Univ. Tokyo}, 18(2):155--180, 2011.

\bibitem{NoW14}
Junjiro Noguchi and J{\"o}rg Winkelmann.
\newblock {\em Nevanlinna theory in several complex variables and {D}iophantine
  approximation}, volume 350 of {\em Grundlehren der Mathematischen
  Wissenschaften [Fundamental Principles of Mathematical Sciences]}.
\newblock Springer, Tokyo, 2014.

\bibitem{Pac04}
Gianluca Pacienza.
\newblock Subvarieties of general type on a general projective hypersurface.
\newblock {\em Trans. Amer. Math. Soc.}, 356(7):2649--2661 (electronic), 2004.

\bibitem{Pau08}
Mihai P{\u{a}}un.
\newblock Vector fields on the total space of hypersurfaces in the projective
  space and hyperbolicity.
\newblock {\em Math. Ann.}, 340(4):875--892, 2008.

\bibitem{Pau14}
Mihai P{\u{a}}un.
\newblock Techniques de construction de diff\'erentielles holomorphes et
  hyperbolicit\'e (d'apr\`es {J}.-{P}. {D}emailly, {S}. {D}iverio, {J}.
  {M}erker, {E}. {R}ousseau, {Y}.-{T}. {S}iu {$\dots$}).
\newblock {\em Ast\'erisque}, (361):Exp. No. 1061, vii, 77--113, 2014.

\bibitem{Rou06}
Erwan Rousseau.
\newblock \'{E}quations diff\'erentielles sur les hypersurfaces de {$\Bbb
  P^4$}.
\newblock {\em J. Math. Pures Appl. (9)}, 86(4):322--341, 2006.

\bibitem{Rou07}
Erwan Rousseau.
\newblock Weak analytic hyperbolicity of generic hypersurfaces of high degree
  in {$\Bbb P^4$}.
\newblock {\em Ann. Fac. Sci. Toulouse Math. (6)}, 16(2):369--383, 2007.

\bibitem{Sem54}
John~Greenlees Semple.
\newblock Some investigations in the geometry of curve and surface elements.
\newblock {\em Proc. London Math. Soc. (3)}, 4:24--49, 1954.

\bibitem{S-Z02}
Bernard Shiffman and Mikhail Zaidenberg.
\newblock Hyperbolic hypersurfaces in {$\Bbb P^n$} of {F}ermat-{W}aring type.
\newblock {\em Proc. Amer. Math. Soc.}, 130(7):2031--2035 (electronic), 2002.

\bibitem{Siu87}
Yum~Tong Siu.
\newblock Defect relations for holomorphic maps between spaces of different
  dimensions.
\newblock {\em Duke Math. J.}, 55(1):213--251, 1987.

\bibitem{Siu04}
Yum-Tong Siu.
\newblock Hyperbolicity in complex geometry.
\newblock In {\em The legacy of {N}iels {H}enrik {A}bel}, pages 543--566.
  Springer, Berlin, 2004.

\bibitem{Siu15}
Yum-Tong Siu.
\newblock Hyperbolicity of generic high-degree hypersurfaces in complex
  projective space.
\newblock {\em Invent. Math.}, 202(3):1069--1166, 2015.

\bibitem{S-Y97}
Yum-Tong Siu and Sai-Kee Yeung.
\newblock Defects for ample divisors of abelian varieties, {S}chwarz lemma, and
  hyperbolic hypersurfaces of low degrees.
\newblock {\em Amer. J. Math.}, 119(5):1139--1172, 1997.

\bibitem{Voi96}
Claire Voisin.
\newblock On a conjecture of {C}lemens on rational curves on hypersurfaces.
\newblock {\em Journal of Differential Geometry}, 44(1):200--213, 1996.

\bibitem{Xie15}
Song-Yan {Xie}.
\newblock On the ampleness of the cotangent bundles of complete intersections.
\newblock {\itshape ArXiv e-prints 1510.06323}, october 2015.

\bibitem{Xie16}
Song-Yan {Xie}.
\newblock {Generalized Brotbek's symmetric differential forms and
  applications}.
\newblock {\em ArXiv e-prints}, January 2016.

\bibitem{Zai87}
Mikhail Zaidenberg.
\newblock The complement to a general hypersurface of degree {$2n$} in {${\bf
  CP}^n$} is not hyperbolic.
\newblock {\em Sibirsk. Mat. Zh.}, 28(3):91--100, 222, 1987.

\bibitem{Zai03}
Mikhail {Zaidenberg}.
\newblock {Hyperbolic surfaces in ${\bf P}^3$: examples}.
\newblock {\em ArXiv Mathematics e-prints}, November 2003.


\end{thebibliography}
\bibliographystyle{plain}

\end{document}